\documentclass[12pt, letterpaper]{amsart}

\usepackage{amsmath,amssymb,latexsym, color
}
\usepackage[dvips,centering,includehead,width=13.1cm,height=22.7cm,
paperheight=23.8cm,paperwidth=17cm
]{geometry}
\usepackage{enumitem}
\usepackage{hyperref}
\usepackage{pict2e}
\usepackage{epic,graphicx}
\usepackage{rotating}
\usepackage{mathtools}
\usepackage{tikz-cd}
\usepackage{ccaption}
\usepackage[commentmarkup=footnote]{changes}
\usepackage{subfiles}
\usepackage{nicefrac}

\changecaptionwidth
\captionwidth{5.6in}

\renewcommand{\ref}{\hyperref}

\def\K{{\mathbb K}}
\def\Z{{\mathbb Z}}
\def\R{{\mathbb R}}

\def\NN{{\mathbb N}}
\def\C{{\mathbb C}}

\def\PP{{\mathbb P}}

\newcommand{\bp}{\boldsymbol{p}}

\newcommand{\bc}{\boldsymbol{c}}

\newcommand{\bn}{\boldsymbol{n}}

\newcommand{\uz}{\underline{z}}
\newcommand{\ux}{\underline{x}}
\newcommand{\oa}{\overline{a}}

\DeclareMathOperator{\Int}{Int}
\DeclareMathOperator{\conj}{conj}\DeclareMathOperator{\ini}{ini}
\DeclareMathOperator{\Ini}{Ini}

\DeclareMathOperator{\val}{Val}
\DeclareMathOperator{\conv}{conv}

\DeclareMathOperator{\Tor}{Tor}\DeclareMathOperator{\gen}{ge}

\DeclareMathOperator{\ord}{ord}
\DeclareMathOperator{\wt}{wt}
\DeclareMathOperator{\OG}{OG}

\DeclareMathOperator{\Spec}{Spec}

\newtheorem{theorem}{Theorem}[section]
\newtheorem{proposition}[theorem]{Proposition}

\newtheorem{corollary}[theorem]{Corollary}
\newtheorem{lemma}[theorem]{Lemma}

\theoremstyle{definition}

\newtheorem{remark}[theorem]{Remark}

\newtheorem{example}[theorem]{Example}
\newtheorem{definition}[theorem]{Definition}

\newtheorem{Construction}{Construction}

\numberwithin{equation}{section}

\definechangesauthor[name={Uriel}, color=purple]{uriel}

\usetikzlibrary{arrows.meta, decorations.markings}

\usepackage{environ}
\makeatletter
\newsavebox{\measure@tikzpicture}
\NewEnviron{scaletikzpicturetowidth}[1]{%
	\def\tikz@width{#1}%
	\begin{lrbox}{\measure@tikzpicture}%
		\BODY
	\end{lrbox}%
	\pgfmathparse{#1/\wd\measure@tikzpicture}%
	\BODY
}
\makeatother

\usepackage{todonotes}
\makeatletter
\providecommand\@dotsep{5}
\makeatother

\newcommand{\bs}[1]{\boldsymbol{#1}}
\newcommand{\mbb}[1]{\mathbb{#1}}

\newcommand\moduli[1]{\overline{\mathcal{M}}^{#1}}
\newcommand\openmoduli[1]{\mathcal{M}^{#1}}
\newcommand\modulispace{\overline{\mathcal{M}}^{\text{trop}}_{g, \langle u_i \rangle}(\Delta)}
\newcommand{\troppoint}{{\scriptstyle \Pi}}
\newcommand{\abstroppoint}{x}
\newcommand{\embtroppoint}{q}
\newcommand{\absalgpoint}{p}
\newcommand{\embalgpoint}{w}

\newcommand{\ogamma}{\overline{\Gamma}}
\DeclareMathOperator{\trop}{trop}
	
	\DeclarePairedDelimiterX\setc[2]{\{}{\}}{\,#1 \;\delimsize\vert\; #2\,}

\begin{document}
\title{Refined enumerative invariants and mixed Welschinger invariants}

\author{Eugenii Shustin}
\address{School of Mathematical Sciences, Tel Aviv
	University, Tel Aviv 69978,
	Israel}
\email{shustin@tauex.tau.ac.il}

\author{Uriel Sinichkin}
\address{School of Mathematical Sciences, Tel Aviv
	University, Tel Aviv 69978,
	Israel}
\email{sinichkin@mail.tau.ac.il}

\thanks
{\emph{2020 Mathematics Subject Classification.}
	Primary 14N10; Secondary 14T20.
}

\date{
	\today
}

\begin{abstract}
	For real toric surfaces and conjugation invariant point conditions with all conjugate pairs on the boundary divisors, we prove that the signed count of real curves of arbitrary genus in the linear system through the given points is invariant under variation of the points, provided the reduced tropicalization is in general position.
	The proof is based on a new relative refined tropical invariant, which is invariant under variation of the point conditions and specializes at $y\to -1$ to this signed count; at $y\to 1$ the same invariant recovers the count of complex curves with prescribed tangency to the boundary.
	We extend the invariant to allow arbitrary tangency orders along the boundary and identify its $y\to 1$ limit with the corresponding complex count.
	Finally, we show that in positive genus the signed real count is not invariant when conjugate pairs are allowed in the interior, even under strong genericity assumptions.
\end{abstract}

\keywords{Enumerative geometry, Characteristic numbers, Tropical geometry, Refined tropical invariants}


\maketitle

\tableofcontents

\section{Introduction}

\subsection{Motivation}
Enumerative geometry studies numerical curve counts in algebraic and symplectic varieties, and over the complex numbers such counts are naturally packaged into deformation-invariant Gromov--Witten type invariants \cite{Kontsevich1998}.
Over the reals, in contrast, the naive number of real curves satisfying incidence conditions typically depends on the chosen configuration of constraints \cite{IKS_enum_welshinger03,Welschinger2003}.

A fundamental way to restore invariance in genus \(0\) is due to Welschinger: one counts real rational curves with signs (now called \emph{Welschinger signs}) determined by the parity of the number of solitary real nodes, obtaining deformation invariants of real symplectic \(4\)-manifolds and lower bounds for the corresponding real enumerative problems \cite{Welschinger2003, Welschinger2005}.
In the algebraic setting of real toric Del Pezzo surfaces, these invariants and their computation were further developed in \cite{IKS_welshinger_caporaso_harris09,IKS_welschinger_alg17}.
Already in low-genus extensions one encounters a basic obstruction: the signed real count in arbitrary positive genus is, in general, not invariant under variation of constraints \cite[Section~3]{IKS_enum_welshinger03} (except in very special situations \cite{itenberg_shustin_real_relative_invariants_2024, shustin_higher_genus_welschinger_invariants_2014}).

Tropical geometry provides an effective combinatorial bridge to both complex and real curve counts on toric surfaces \cite{Mikhalkin2003,ItenbergMikhalkinShustin2007}.
Mikhalkin's correspondence theorem expresses complex enumerative invariants in terms of counts of planar tropical curves with appropriate multiplicities \cite{Mikhalkin2003}.
For the real side, the results of \cite{Mikhalkin2003} yield a computation of the Welschinger invariants for totally real point conditions.
Strikingly, the resulting signed tropical count remains stable for constraints whose tropicalizations stay in tropically general position (we refer to this as being \emph{invariant close to the tropical limit}) even in positive genus.

The tropical computation of the Welschinger invariants (in genus $0$) for non-totally real point conditions was further developed in \cite{Shustin_welschinger_trop06} and was shown to be equal to the count of tropical curves with marked vertices in \cite{GMS_broccoli_intro13}.

In a different direction, Block and G\"ottsche introduced a \(y\)-refinement of tropical curve multiplicities leading to refined curve counts that interpolate between the classical (complex) multiplicity and its real counterpart \cite{BG,itenberg2013block}.
Further developments on the tropical side include broccoli invariants and refined elliptic tropical invariants \cite{Gottsche_shroeter_refined_broccoli,Schroeter_shustin_refined_elliptic08}.

In \cite{CNRBI} we defined and studied a refined tropical count for tropical curves with marked vertices.
The results of \cite{CNRBI} have two main limitations: first, the refined count is not invariant in certain settings; second, the limit of the count as $y\to -1$ has no clear geometric interpretation.

\subsection{Main results}

In the present work, we address the limitations of \cite{CNRBI} by placing high-multiplicity points on the boundary of the toric surface.

Our central result is the invariance of the real count close to the tropical limit in every genus if all conjugate pairs of points are located on the boundary divisors.
\begin{theorem}\label{thm:real_invariance}
    Fix a non-negative integer $g\ge 0$ and let $X=\Tor_\R(P)$ be a toric surface corresponding to a convex lattice polygon $P\subset\R^2_\Z$.
    Let $\bs{\embalgpoint}\subset X$ be a conjugation invariant set of $|\partial P|+g-1$ points such that all conjugate pairs of points are located on the boundary divisors.
    If the reduced tropicalization of $\bs{\embalgpoint}$ is in general position, then the signed count of real curves in the linear system $|{\mathcal L}_P|$ passing through $\bs{\embalgpoint}$ depends only on $P$ and the number of conjugate pairs of points on each component of the boundary divisor, and does not depend on the choice of $\bs{\embalgpoint}$.
\end{theorem}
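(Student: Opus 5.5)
The plan is to go through tropical geometry. I will define a relative refined tropical invariant, show that its specialization at $y\to -1$ is the signed real count, and then prove that this refined count does not depend on the point configuration.

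\emph{Correspondence.} Because the reduced tropicalization of $\bs{\embalgpoint}$ is in general position, a real correspondence theorem in the style of Mikhalkin and Shustin applies. For a generic choice of the configuration close to the tropical limit, the real curves in $|{\mathcal L}_P|$ of genus $g$ through $\bs{\embalgpoint}$ come from simple tropical curves of genus $g$. These pass through the tropical images of the points. The real points sit in $\R^2$, and each conjugate pair on a boundary divisor becomes a marked end of the tropical curve: the end has weight $2$, or two parallel ends run into that divisor. For each tropical curve $T$ I will count the real lifts together with their Welschinger signs. The result should factor over the vertices and ends as a local product, $W(T)=\prod_V w_V\prod_e w_e$. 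Here trivalent vertices contribute $(-1)^{(\mu_V-1)/2}$ or $0$, as in Mikhalkin's real count. The boundary ends carrying conjugate pairs contribute a local factor, which I will compute from the patchworking of a curve tangent to the divisor at a pair of conjugate points. The first step is to check that this factor equals $[\,\cdot\,]_{y=-1}$ of a suitable refined end-weight. Then $W(T)$ is the value at $y=-1$ of a refined multiplicity $M_y(T)=\prod_V [\mu_V]_y\cdot\prod_e \phi_e(y)$, with $[\mu]_y$ the Block--G\"ottsche multiplicity.

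\emph{Tropical invariance.} I will show that $\sum_T M_y(T)$ is constant on the open dense set of configurations with generic reduced tropicalization. The approach is the standard wall-crossing argument. Connect two generic configurations by a generic path. The sum can only change when the path crosses a codimension-one stratum where a tropical curve through the points has one degenerate feature. The possible features are a four-valent vertex, two vertices merging, an edge of length zero, or a marked boundary end colliding with another end or with a vertex. Four-valent vertices in the interior are handled by the Block--G\"ottsche/Itenberg--Mikhalkin local identity $[a]_y[b]_y+\dots$. For positive genus, crossed edges and parallel edges need the refined version of the elliptic and higher-genus balancing arguments. Here the key point is the hypothesis: since no conjugate pairs lie in the interior, there are no interior marked vertices of the type that broke invariance in \cite{CNRBI}.

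\emph{Main obstacle.} The hardest step will be the walls involving the boundary. These include two boundary ends on the same divisor swapping order, and a weight-$2$ end degenerating into two parallel weight-$1$ ends or the reverse. I will first try to choose $\phi_e(y)$ so that the local contributions on each side of such a wall match. This reduces to a one-dimensional identity on the toric divisor. I expect a caterpillar-type relative Caporaso--Harris recursion, applied along the divisor, to give the needed identity. Since the combinatorial type near the divisor depends only on the number of conjugate pairs on each side of $P$, the resulting invariant depends only on $P$ and those numbers. Setting $y=-1$ then proves Theorem \ref{thm:real_invariance}.
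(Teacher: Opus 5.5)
Your overall architecture matches the paper's: a refined count, invariant by wall-crossing, whose $y\to-1$ limit is the Welschinger-signed count. But the real correspondence step fails as you describe it, and that step is where the paper's new content lies.

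The tropicalization $\Gamma$ of a real curve through a conjugate pair on a divisor has two conjugate weight-$1$ ends with the same image. The curve meets the divisor transversally at two conjugate points; there is no tangency. So $\Gamma$ is not simple: pairs of conjugate edges with equal images either meet at real $4$-valent vertices or end at pairs of conjugate vertices. The trivalent object is the quotient $\Gamma'=\Gamma/\bc$, in which the pair becomes a single fixed end of weight $2$. Its adjacent vertex then has $\mu(V)=|2u\wedge a|$ even. Your rule ``trivalent vertices contribute $(-1)^{(\mu_V-1)/2}$ or $0$'' therefore makes every curve contribute $0$ as soon as there is one conjugate pair.

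What is missing is the analysis of the even subgraph $\Gamma'_{\text{im}}$. It is not localized at the end but propagates into the curve:
\begin{itemize}
\item Even edges fixed by $\bc$ contribute $0$ in total, because the two real Chebyshev-type modifications carry opposite Welschinger signs.
\item An even edge of $\Gamma'$ coming from two conjugate weight-$\ell$ edges of $\Gamma$ contributes $\ell$ modifications with no real nodes.
\item A vertex of $\Gamma'_{\text{im}}\setminus\Gamma'_{\text{re}}$ comes from two conjugate limit curves. It gets the sign $(-1)^{\mu(V)/4}$ from the parity of their real intersection points.
\item A vertex of $\Gamma'_{\text{im}}\cap\Gamma'_{\text{re}}$ is a real $4$-valent vertex of $\Gamma$. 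It is counted by an explicit real parametrization of rational curves in a triangle with one even side.
\end{itemize}
Comparing $\chi(\Gamma)=\chi(\Gamma')=1-g$ then shows that the signed count vanishes unless every component $K$ of $\Gamma'_{\text{im}}$ is a tree with $|K\cap\Gamma'_{\text{re}}|=1$. Otherwise each vertex of $\Gamma'_{\text{im}}$ contributes $\mu(V)/2$ up to sign.

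For the same reason, your plan of matching factor by factor at $y=-1$ cannot work. The end weight that works is $1/[\wt(E)]^-_y$, which has a pole at $y=-1$ when $\wt(E)=2$, while $[\mu(V)]^-_y$ vanishes there for even $\mu(V)$. The limit of $\prod_V[\mu(V)]^-_y\prod_{E\in\Gamma^1_\infty}1/[\wt(E)]^-_y$ exists because of a count on each component $K$ of $\Gamma'_{\text{im}}$. There, the number of finite vertices (all of even multiplicity) minus the number of even ends equals $2(|K\cap\Gamma'_{\text{re}}|-\chi(K))\ge 0$. The limit is nonzero exactly under the same tree-meeting-once condition. It equals the real count only after shifting the factors $[2]^-_y$ along the regular orientation and applying Pick's theorem.

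On the invariance side, the boundary walls you single out as the main obstacle do not occur. In the reduced tropicalization the pair is one fixed end of weight exactly $2$ (by the dimension count), and it never splits into two weight-$1$ ends. The codimension-one types are only:
\begin{itemize}
\item a $4$-valent vertex;
\item a marked point on a vertex cutting off a bounded component;
\item two $4$-valent vertices joined by a collinear cycle.
\end{itemize}
The fixed-end factors are unchanged across each of these walls. So the Itenberg--Mikhalkin local identities already give invariance, with no Caporaso--Harris-type recursion along the divisor.
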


In fact, we have defined a refined count that is invariant under variation of the points $\bs{\embalgpoint}$ as long as their reduced tropicalization is in general position (see Proposition \ref{prop:refined_weight_invariant}) and whose value at $y\to -1$ is equal to the desired signed count (see Proposition \ref{prop:refined_invariant_real} and Theorem \ref{thm:real_final_count}), which proves Theorem \ref{thm:real_invariance}.
Investigating the limit of the refined invariant as $y\to 1$ we obtain the number of algebraic curves with prescribed tangency points to the boundary divisors, see Theorem \ref{thm:complex_tangency_num} and Proposition \ref{prop:refined_invariant_complex} below.

Moreover, we have defined the relative refined invariant allowing arbitrary multiplicity at the boundary divisors.
For this extended invariant, we can show that the limit as $y\to 1$ is equal to the number of algebraic curves with prescribed tangency points of the prescribed orders to the boundary divisors, see Theorem \ref{thm:complex_tangency_num} and Proposition \ref{prop:refined_invariant_complex} below.
Of course the real enumerative question does not make sense for the extended invariant.

\subsection{Structure of the paper}
The paper is organized as follows.
In Section \ref{sec:preliminaries} we fix notations and recall the relevant tropical and non-Archimedean background.
Section \ref{sec:complex_count} proves the complex correspondence theorem.
Section \ref{sec:real_conjugate} develops the real correspondence theorem for essentially peripheral constraints.
Section \ref{sec:refined} introduces the relative refined invariant and proves its invariance and limit at $y\to \pm 1$.
We also discuss the computation of the relative refined invariant in the case of the projective plane, and when all boundary points lie on the same boundary component, in Section \ref{sec:computation}.
Finally, in Section \ref{sec:negative_result} we show that outside the essentially peripheral regime one cannot expect invariance of the signed real count in positive genus, even under the most restrictive conditions.

\subsection*{Acknowledgments}
The authors were supported by the BSF grant no.  2022/157, and by the Bauer-Neuman Chair in Real and Complex Geometry.
The second author was also supported by the ISF grant no. 1918/23, and by the Milner Foundation.

\section{Preliminaries}\label{sec:preliminaries}

\subsection{Notations}\label{sec:notations}
{\bf(1)} We use the complex field $\C$, the real field $\R$, the field $\K$ of locally convergent Puiseux series with complex coefficients, and the field $\K_\R\subset \K$ of locally convergent Puiseux series with real coefficients.
Note that $\K$ is an algebraically closed field, it has a natural conjugation $\bc\colon \K\to \K$, and $\K_\R$ is the fixed field of $\bc$.
For an element $a=\sum_{r\ge r_0}a_rt^r\in\K^*$, $a_{r_0}\ne 0$, denote
\begin{equation*}\trop(a)=r_0,\quad \ini(a)=a_{r_0},\quad \Ini(a)=a_{r_0}t^{r_0}\ .\end{equation*}
For $p\in (\K^\times)^2$ denote by $\trop(p)\in \R^2$ the point $(\trop(p_1), \trop(p_2))$.
Let $F=\sum_{\omega\in P\cap\Z^n}a_\omega\uz^\omega\in\K[\uz^{\pm 1}]$, $\uz=(z_1,...,z_n)$, have Newton polytope $P$. It
yields a {\it tropical polynomial}
$$N(\ux)=N_F(\ux)=\max_{\omega\in P\cap\Z^n}(\langle\omega,\ux\rangle-\trop(a_\omega)),\quad N:\R^n\to\R\ ,$$
and its Legendre dual, {\it valuation function} $\nu=\nu_N:P\to\R$, whose graph defines a subdivision $\Sigma_\nu$
of $P$ into linearity domains which all are convex lattice polytopes. One can write
$$F(\uz)=\sum_{\omega\in \Delta\cap\Z^n}(a^0_\omega+O(t^{>0}))t^{\nu(\omega)}\uz^\omega\ ,$$ where
$a^0_\omega\in\C$, $a^0_\omega\ne0$ for all $\omega$ vertices of the subdivision $\Sigma_\nu$. Given a face $\delta$ of
the subdivision $\Sigma_\nu$, we write
$$F^\delta(\uz)=\sum_{\omega\in\delta\cap\Z^n}a_\omega\uz^\omega,\quad\ini(F^\delta)(\uz)=
\sum_{\omega\in\delta\cap\Z^n}a^0_\omega\uz^\omega\in\C[\uz]\ ,$$
$$\Ini(F^\delta)(\uz)=\sum_{\omega\in\delta\cap\Z^n}a^0_\omega t^{\nu(\omega)}\uz^\omega
\in\K[\uz]\ .$$

\smallskip

{\bf(2)} All lattice polyhedra we consider lie in Euclidean spaces $\R^2$ with fixed integral lattices
$\Z^2\subset\R^2$.

For a convex lattice polygon $P\subset\R^2$, we denote
by $\Tor(P)$ the complex toric surface associated with $P$
and by $\Tor^*(P)\subset\Tor(P)$ the big torus (the dense orbit of the torus action). Next, denote by ${\mathcal L}_P$ the tautological line bundle over $\Tor(P)$, by
$|{\mathcal L}_P|$ the linear system generated by the non-zero global sections (equivalently, by the monomials
$x^iy^j$, $(i,j)\in P\cap\Z^2$). We also use the notation ${\mathcal L}_P(-Z)$ for
${\mathcal L}_P\otimes{\mathcal J}_{Z/\Tor(P)}$, where ${\mathcal J}_{Z/\Tor(P)}$ is the ideal sheaf of a zero-dimensional subscheme $Z\subset\Tor(P)$.

Let $\bn:\widehat C\to X$ be a non-constant morphism of a complete smooth irreducible curve $\widehat C$ to a toric surface $X$. We call it {\it peripherally unibranch} if, for any
toric divisor $D\subset X$, the divisor $\bn^*(D)\subset\widehat C$ is concentrated at one point.
Respectively we call the curve $C=\bn_*\widehat C$ {\it peripherally smooth},
if it is smooth at its intersection points with each toric divisor.

\smallskip
{\bf(3)}  For a vector $v\in\Z^n$, resp. a lattice segment $\sigma\subset\R^n_\Z$, we denote
its lattice length by $\|v\|_\Z$,
resp. $\|\sigma\|_\Z$. More generally, for an $m$-dimensional lattice polytope
$P\subset\R^n_\Z$, $n\ge m$, we denote by $\|P\|_\Z$ its $m$-dimensional lattice volume (i.e., the ratio
of the Euclidean $m$-dimensional volume of $P$ and the minimal
Euclidean volume of a lattice simplex inside the affine $m$-dimensional subspace of $\R^n_\Z$ spanned by $P$).

For a graph $G$, denote by $G^0$ the set of vertices and by $G^1$ the set of edges of $G$.

\subsection{Tropical curves}\label{sec:trop_curves}

A {\it plane tropical curve} is a tuple $(\ogamma, h)$, where
\begin{itemize}
    \item $\ogamma$ is a finite connected compact topological graph without bivalent vertices with a metric on $\Gamma:=\ogamma\setminus \ogamma_{\infty}^0$, where $\ogamma_{\infty}^0$ is the set of univalent vertices of $\ogamma$.
    The metric on $\Gamma$ should assign finite length to compact edges of $\Gamma$, also called bounded edges, and infinite length to non-compact edges of $\Gamma$.
	We denote by $\Gamma^1_b$ the set of bounded edges and by $\Gamma^1_\infty$ the set of unbounded edges.
    
    \item $h:\Gamma\to\R^2$ is a continuous map such that $h$ is affine-integral on each edge of $\Gamma$ in the length coordinate, and it is nonconstant on at least one edge of $\Gamma$.
    Furthermore, at each vertex $V$ of $\Gamma$, the balancing condition holds
    $$\sum_{E\in\Gamma^1,\ V\in E}
    \oa_V(E)=0\ ,$$ where $\oa_V(E)$ is the image under the differential $D(h\big|_E)$
    of the unit tangent vector to $E$ emanating from
    its endpoint $V$. We call $\oa_V(E)$ the {\it directing vector} of $E$ (centered at $V$).   
	
    For univalent vertex $V\in \ogamma^0_\infty$, we denote by $\oa_V$ the directing vector of the edge $E$ incident to $V$, and by $h(V):=[h(E)]\in \nicefrac{\R^2}{\oa_V\R}$ the line containing the image of $E$.
\end{itemize}

Define the {\it genus} of $(\ogamma, h )$ as the first Betti number of $\ogamma$.

When this causes no ambiguity, we will denote the tuple $(\ogamma, h)$ by $\Gamma$.

The \emph{valency} of a vertex $ V\in \Gamma^0 $ is the number of edges incident to it.
A vertex $ V\in \Gamma $ is called {\it flat} or {\it collinear} if the linear span of $ {\setc{\oa_V(E)}{E\in \Gamma^1, V\in E}} $ is one dimensional.
Given a non-collinear vertex $V\in\Gamma^0$, the directing vectors $\oa_V(E)$ over all edges $E$ incident to $V$, being positively rotated by $\frac{\pi}{2}$ form a nondegenerate convex lattice polygon
$P(V)$, which we call {\it dual} to $V$.
The multi-set $\Delta=\Delta(\Gamma,h)
\subset\R^2_\Z$ of vectors $\{\oa_V(E)\ne0\ : \ E\in\Gamma^1_\infty,
\ V\in\Gamma^0,\ V\in E\}$ is called the {\it degree} of $(\Gamma,h)$. It is easy to see that $\Delta$
is nonempty and {\it balanced} (i.e., the vectors of $\Delta$ sum up
to zero. The degree $\Delta$ is called {\it primitive} if it consists of primitive vectors (i.e., vectors of lattice length $1$).
Positively rotated by $\frac{\pi}{2}$, the vectors of $\Delta$
can be combined into a convex lattice polygon $P=P(\Gamma,h)$, called
the {\it Newton polygon} of $(\Gamma,h)$. In this case, we say that the degree $\Delta$ {\it induces} the polygon $P$. The degree $\Delta$ is called {\it nondegenerate} if $\dim P(\Gamma,h)=2$.

To each edge $E\in\Gamma^1$ we assign the {\it weight} $$\wt(E)=\big\|\oa_V(E)\big\|_\Z\ .$$

A {\it marked plane tropical curve} is a tuple $(\Gamma,h, \boldsymbol{\abstroppoint})$, where $ (\Gamma,h) $ is a plane tropical curve and $\boldsymbol{\abstroppoint}$ is an ordered subset of points of $\ogamma$.
An end $E\in \Gamma^1_\infty$ is called {\it fixed} if its univalent end is marked.

Two marked tropical curves $(\Gamma, h, \boldsymbol{\abstroppoint})$ and $(\Gamma', h', \boldsymbol{\abstroppoint'})$ are said to be \emph{isomorphic} if there exists an isomorphism of graphs $\varphi:\ogamma\to \ogamma'$ that restricts to an isomorphism of metric graphs $\varphi|_{\Gamma}:\Gamma\to\Gamma'$ with $h'\circ\varphi|_{\Gamma}=h$ and $\varphi(\boldsymbol{\abstroppoint}) = \boldsymbol{\abstroppoint'}$.

\begin{definition}
	Let $(\Gamma,h)$ be a plane tropical curve, and let $V\in\Gamma^0$ be a trivalent vertex, which is not incident to
	contracted edges.
	Define the Mikhalkin multiplicity of $V$ by
	$$\mu(V)=\big|\oa_V(E_1)\wedge\oa_V(E_2)\big|\ ,$$
	where $E_1,E_2\in\Gamma^1$ are some two incident to $V$ edges.
\end{definition}

\begin{definition}\label{def:regular_trop_curve}
    A marked tropical curve is called \emph{regular} if every connected component of $\ogamma\setminus \boldsymbol{\abstroppoint}$ is simply connected and contains exactly one univalent vertex.
    If $(\ogamma, h, \bs{\abstroppoint})$ is regular, the edges of $\Gamma\setminus \bs{\abstroppoint}$ can be oriented in a unique way for which the unique unfixed unbounded edge oriented towards infinity and every unmarked vertex has exactly one outgoing edge.
    We call such orientation \emph{the regular orientation on $\Gamma$}.
\end{definition}

\begin{definition}
	Let $ (\Gamma, h, \boldsymbol{\abstroppoint}) $ be a tropical curve.
	For every marked point $ \abstroppoint_i\in \boldsymbol{\abstroppoint} $ denote by $ { \Omega}_i \in \ogamma^1\cup \ogamma^0 $ the unique cell of $ \ogamma $ containing $ \abstroppoint_i $ in its relative interior.
	Additionally, denote by $\underline{\Gamma}$ the abstract graph obtained from $\ogamma$ by forgetting the metric structure.
	We define the \emph{combinatorial type} (or simply \emph{type}) of $ (\Gamma, h, \boldsymbol{\abstroppoint}) $ to be the tuple $ (\underline{\Gamma}, (\wt(E))_{E\in \Gamma^1}, \bs{{\Omega}}, (\oa_V(E))_{V\in E\in \Gamma^1}) $.
	We will denote the combinatorial type of $ \Gamma $ by $ [\Gamma] $.
\end{definition}

\subsection{Moduli space of plane tropical curves and evaluation}\label{sec:moduli_and_evaluation}

\begin{definition}\label{def:trop_point_conditions}
    Tropical point conditions are a sequence of pairs $(\embtroppoint_i, u_i)$ where $u_i\in \Z^2$ and $\embtroppoint_i\in \nicefrac{\R^2}{u_i\R}$.

    A marked plane tropical curve $(\Gamma, h, \bs{\abstroppoint})$ satisfies tropical point conditions $\langle (\Lambda_i, u_i) \rangle_{i=1}^n$ if for every $i$, $h(\abstroppoint_i)= \embtroppoint_i$ and if $u_i\ne 0$ then $\abstroppoint_i$ is adjacent to an unbounded end $E$ of weight at least $|u_i|$ and with $u_i\in \oa_E(E)\R_{>0}$ (i.e. the direction of the end is the same as the direction of $u_i$).
    
    The type of tropical point conditions $\langle (\embtroppoint_i, u_i) \rangle_{i=1}^n$ is the sequence $\langle u_i \rangle_{i=1}^n$.

	A degree $\Delta$ \emph{admits} a type of tropical point conditions if the multiset of nonzero vectors $u_i$ in the type of the tropical point conditions is a subset of $\Delta$.
	If a tropical curve satisfies tropical point conditions of a given type, then its degree admits that type of tropical point conditions.
\end{definition}

We fix a nondegenerate balanced multiset $\Delta \subseteq \mbb{Z}^2\setminus \{0\}$ that induces a polygon $P$, a nonnegative integer $g\le \left|\Int(P)\cap \Z^2 \right|$, and a type of tropical point conditions $\langle u_i \rangle_{i=1}^n$ that is admitted by $\Delta$.

Given a combinatorial type $ \alpha = (\Gamma, \bs{w}, \bs{\Omega}, (\oa_V(E))) $ of degree $ \Delta $, genus $ g $ such that $\Gamma$ satisfies a tropical point conditions of type $\langle u_i \rangle_{i=1}^n$, we can parameterize the tropical curves of type $ \alpha $ by a polyhedron $ \openmoduli{\alpha} \subseteq \mbb{R}^{|\Gamma^1|-|\Delta|+|\bs{c}\cap \Gamma^1|+2} $.
Indeed, every such tropical curve is defined by the length of its bounded edges, the distance between vertices and neighboring marked points and a position of an arbitrarily chosen vertex (and those lengths and distances should satisfy linear equations and inequalities in order to represent a tropical curve).

The points in the closure ${\moduli{\alpha}} \subseteq \mbb{R}^{|\Gamma^1|-|\Delta|+|\bs{c}\cap \Gamma^1|+2}$ correspond to tropical curves of combinatorial types attained from $\alpha$ by possibly contracting some edges and moving marked points from an edge to one of its boundary vertices.
Thus, for such combinatorial types $ \beta $ we get that $ {\moduli{\beta}} $ is naturally a face of $ {\moduli{\alpha}} $.
We will say that $\beta$ is a \emph{degeneration} of $\alpha$, and $\alpha$ is a \emph{regeneration} of $\beta$.
Gluing the spaces ${\moduli{\alpha}}$ via this identification, we obtain the \emph{moduli space of genus $g$ tropical curves satisfying tropical point conditions of type $\langle u_i \rangle_{i=1}^n$}:
\[ \modulispace := \lim_{\stackrel{\longrightarrow}{\alpha}} \moduli{\alpha}. \]

The following lemma is an immediate consequence of \cite[Proposition 3.9]{Gathmann2007}.
\begin{lemma}\label{lemma:dim_moduli_space}
	Let $\alpha$ be a combinatorial type of a tropical curve $\Gamma$ satisfying tropical point conditions of type $\langle u_i \rangle_{i=1}^n$.
	Then $\openmoduli{\alpha}$ is a polyhedron of dimension
	\[ \dim \openmoduli{\alpha} \le |\Delta|+g-1+|\setc{i}{u_i=0}| \]
with equality if and only if $\Gamma$ is trivalent and all marked points $\abstroppoint_i$ with $u_i=0$ are disjoint from each other and from the vertices of $\Gamma$.
\end{lemma}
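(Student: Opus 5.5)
Since $\openmoduli{\alpha}$ is cut out by linear equations and strict inequalities in the parameters listed above, its dimension is that of the affine space of admissible parameter values. I will therefore count the free parameters of type $\alpha$ directly, in the style of \cite[Proposition 3.9]{Gathmann2007}. The proof reduces the marked-point case to the unmarked one.

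\textbf{Step 1: unmarked curves.} Ignore the marked points for now. A plane tropical curve of type $\alpha$ is determined by two pieces of data: the position in $\R^2$ of one chosen vertex, and the lengths of the bounded edges. Directions and weights are fixed by $\alpha$. These lengths are constrained by $2g$ linear equations, two for each independent cycle of $\Gamma$, expressing that the image of the cycle closes up. Hence
\[
\dim \le 2 + |\Gamma^1_b| - 2g + (\text{excess}),
\]
where the excess accounts for dependent cycle conditions. Dependencies arise only from flat cycles or cycles through contracted pieces; for a trivalent curve Gathmann's argument shows the cycle conditions are independent. Now let $k_V$ denote the valency of a vertex $V$. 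The Euler characteristic gives $|\Gamma^1_b| = |\Gamma^0| + g - 1$, and $\sum_V k_V = 2|\Gamma^1_b| + |\Delta|$. Combining these, $|\Gamma^1_b| = |\Delta| + 3g - 3 - \sum_V (k_V - 3)$. Substituting, the dimension is at most $|\Delta| + g - 1 - \sum_V (k_V - 3)$. This is exactly \cite[Proposition 3.9]{Gathmann2007}, with equality iff $\Gamma$ is trivalent. I take that statement as given, including the independence of the cycle conditions.

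\textbf{Step 2: marked points.} Each marked point with $u_i = 0$ contributes at most one extra parameter: its position along the edge carrying it. If it sits at a vertex, or coincides with another marked point, it contributes none. Each marked point with $u_i \neq 0$ lies at the univalent end of a non-contracted end, so it contributes no extra parameter. The condition $h(\abstroppoint_i) = \embtroppoint_i$ is not imposed here, since we are describing the whole stratum and not a fibre of evaluation. Adding these contributions gives the bound $|\Delta| + g - 1 + |\setc{i}{u_i=0}|$. Equality holds exactly when every $u_i = 0$ point gives a free parameter, that is, lies in the interior of an edge and is distinct from the other such points, and $\Gamma$ is trivalent.

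\textbf{Main obstacle.} The main difficulty is to make sure that the conventions here (unbounded edges, the compactification $\ogamma$, possibly contracted edges) match Gathmann's setting. In particular, marked points on contracted ends must be modelled as in Gathmann's marked ends. Once this dictionary is set up, the lemma is a translation.
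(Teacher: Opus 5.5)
Your reduction is the same as the paper's, which simply invokes \cite[Proposition 3.9]{Gathmann2007}. The unmarked planar bound $\dim\le|\Delta|+g-1$, with equality iff the curve is trivalent, is taken from there. One then adds one free coordinate for each $u_i=0$ point in the interior of an edge, and none for points at vertices, for coinciding points, or for the univalent vertex of a fixed end. Your Step 2 does this bookkeeping correctly: over the unmarked stratum the extra coordinates range over open intervals, so the dimensions simply add.

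The justification you give in Step 1, however, is wrong, and it should not be presented as a paraphrase of the cited result.

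\textbf{The parameter count gives a lower bound, not an upper bound.} The count yields $\dim\mathcal{M}^{\alpha}=2+|\Gamma^1_b|-2g+e=|\Delta|+g-1-\sum_V(k_V-3)+e$, where $e\ge 0$ is the excess from dependent cycle conditions. So the expected dimension is a \emph{lower} bound. Discarding $e$ for non-trivalent types is unjustified, and the resulting inequality $\dim\mathcal{M}^{\alpha}\le|\Delta|+g-1-\sum_V(k_V-3)$ is false. Take two $4$-valent vertices joined by a collinear cycle, which is option (3) of the paper's codimension-one lemma. There $e=1$, so $\dim\mathcal{M}^{\alpha}=|\Delta|+g-2$, which exceeds your claimed bound $|\Delta|+g-3$.

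\textbf{The planar bound is the real input.} The bound $|\Delta|+g-1$, with equality only for trivalent curves, is a genuinely planar fact: in $\R^2$ any excess must be strictly outweighed by the overvalence $\sum_V(k_V-3)$. It does not follow from counting parameters, and it is the whole content of the citation.

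\textbf{Trivalence does not force independent cycle conditions.} This fails once flat vertices are allowed, and the paper's definition of plane tropical curves allows them. Take an edge of weight $2$ and replace a piece of it by two flat trivalent vertices joined by two parallel edges of weight $1$. This raises the genus by $1$ and the dimension by $2$. Starting from a simple genus-$(g-1)$ curve, you get a trivalent genus-$g$ type with $\dim\mathcal{M}^{\alpha}=|\Delta|+g$ before markings, which exceeds the bound.

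So the conventions you single out as the main obstacle really do matter. Flat cycles, and not only contracted edges, must be excluded or handled as in Gathmann's setting, and the statement should be read under the conventions of the cited proposition.
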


\begin{corollary}
	The dimension of $ \modulispace $ is $ |\Delta|+g-1+|\setc{i}{u_i=0}| $.
\end{corollary}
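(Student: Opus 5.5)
The corollary should follow from Lemma~\ref{lemma:dim_moduli_space} by showing that the bound there is attained by at least one combinatorial type. The moduli space $\modulispace$ is glued from the closed cells $\moduli{\alpha}$, one for each combinatorial type $\alpha$ of degree $\Delta$ and genus $g$ satisfying tropical point conditions of type $\langle u_i\rangle_{i=1}^n$. Its dimension is therefore the maximum of $\dim\openmoduli{\alpha}$ over all such $\alpha$.

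For the upper bound, the lemma gives $\dim\openmoduli{\alpha}\le |\Delta|+g-1+|\setc{i}{u_i=0}|$ for every $\alpha$. The faces $\moduli{\beta}\subseteq\moduli{\alpha}$ added in the gluing do not increase dimension, so $\dim\modulispace$ is at most this bound.

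For the lower bound, I need a combinatorial type that attains equality. By the equality clause of the lemma, it suffices to find a curve $\Gamma$ with the following properties:
\begin{itemize}
\item $\Gamma$ is trivalent, of degree $\Delta$ and genus $g$;
\item for each $i$ with $u_i\ne 0$, the marked point $\abstroppoint_i$ sits on an end of direction $u_i$ and weight at least $|u_i|$;
\item the points $\abstroppoint_i$ with $u_i=0$ are pairwise distinct and lie in the interiors of edges.
\end{itemize}

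I would build $\Gamma$ as the tropical curve dual to a regular triangulation of $P$ that has $g$ interior lattice points as vertices; this is possible since $g\le|\Int(P)\cap\Z^2|$. I would choose the triangulation so that every boundary segment is subdivided according to the multiset $\Delta$, giving ends with the prescribed directions and weights. Such a triangulation exists, for instance by taking a convex lifting of a suitable subset of lattice points.

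The dual curve is trivalent and has genus equal to the number of interior vertices used, namely $g$. Since $\Delta$ admits the type $\langle u_i\rangle$, the nonzero $u_i$ can be matched injectively to ends of $\Gamma$ of the correct direction and weight. The remaining points, those with $u_i=0$, are placed generically at distinct interior points of edges.

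The main obstacle is only the existence of this trivalent, genus-$g$ realization with the correct ends. It is standard, for example via the triangulation argument above, or it can be taken as implicit in \cite{Gathmann2007}. Given that realization, the corollary is immediate from Lemma~\ref{lemma:dim_moduli_space}.
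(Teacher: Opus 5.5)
Your proposal is correct and matches the paper, which states the corollary as an immediate consequence of the dimension lemma with no separate proof. The argument is the same as yours: $\dim\modulispace$ is the maximum of $\dim\openmoduli{\alpha}$ over the glued cells. It is bounded above by that lemma and attained by any trivalent genus-$g$ type whose points with $u_i=0$ are distinct and off the vertices.

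Your dual-triangulation construction makes explicit the existence of such a type, which the paper takes for granted. One detail to tighten: take the lifting strictly convex on the chosen set of boundary subdivision points and $g$ interior points (e.g.\ a small generic perturbation of $\omega\mapsto|\omega|^2$). Otherwise a generic lifting may drop some of these points, and the genus or the boundary subdivision could come out wrong.
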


We conclude this section by defining the evaluation map and analyzing its behavior in codimension zero and one.

\begin{definition}\label{def:evaluation_map}
	The \emph{evaluation map} is the map given by
	\begin{align*}
		Ev: \modulispace & \to \prod_{i=1}^n \nicefrac{\R^2}{u_i\R} \\
		(\Gamma, h, \bs{\abstroppoint}) & \mapsto \left( h(\abstroppoint_1), \dots, h(\abstroppoint_n) \right)
	\end{align*}
\end{definition}

The proof of the following lemma is immediate, and omitted for conciseness.
\begin{lemma}\label{lemma:Ev_injective}
For a tropical curve $(\Gamma,h, \bs{\abstroppoint})$ of combinatorial type $\alpha$, the restriction $\text{Ev}|_{\openmoduli{\alpha}}$ is injective if and only if $\Gamma\setminus \bs{\abstroppoint}$ contains neither components of positive genus nor components with more than one unbounded unmarked edge.
\end{lemma}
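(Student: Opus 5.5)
The plan is to regard $\openmoduli{\alpha}$ as an open polyhedron inside an affine space, with coordinates given by the position of one base vertex together with the lengths of the bounded edges and the distances from marked points to neighboring vertices. On this space $Ev$ is the restriction of an affine-linear map. Injectivity therefore reduces to showing that the kernel of the associated linear map on the tangent space (the space of admissible deformations of the parameters that keep the polyhedron's linear equations) is zero. A deformation lies in this kernel exactly when it moves the curve while keeping every $h(\abstroppoint_i)$ fixed in $\nicefrac{\R^2}{u_i\R}$. The task is thus to decide when a nontrivial deformation of type $\alpha$ can fix all images of marked points.

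For the ``if'' direction, we cut $\ogamma$ at $\bs{\abstroppoint}$. Each component $K$ of $\ogamma\setminus\bs{\abstroppoint}$ is, by hypothesis, a tree with at most one unbounded unmarked end. All its other boundary points are marked points, or fixed ends whose lines $h(V)$ are prescribed modulo $u_i$.

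We then argue by induction along the orientation of Definition \ref{def:regular_trop_curve}, directing edges away from the marked points and toward the single free end. Take a vertex $V$ of $K$ all of whose incoming edges already have fixed images: such an edge is fixed either because it starts at a marked point with fixed image, or because it is a fixed end with a prescribed line. Two non-parallel incoming edges through fixed points, or along fixed lines, determine $V$, since non-parallel lines meet in one point. If all incoming edges are parallel, the vertex would be flat; here we use that the directions in the type are fixed, and we handle this degenerate case by noting that balancing and the fixed directions still pin down the outgoing edge. This determines $h(V)$, and hence the length of each edge entering $V$. Proceeding up the tree, all lengths in $K$ are determined, while the last free end carries no length parameter. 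Thus the kernel is zero.

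For the ``only if'' direction we build explicit kernel elements. If some component $K$ contains a cycle, then, as is standard for genus-one pieces, the cycle can be deformed: we change the lengths of its edges compatibly with the closing condition $\sum \ell_E\, \oa(E)=0$. The solution space of this condition has positive dimension once the cycle has at least three edge directions. If the cycle had only two directions, we would instead translate one side of it. In either case all marked points stay fixed, because they lie outside $K$, or on $K$'s boundary, which we keep fixed. If $K$ has two unbounded unmarked ends, the path between them can be translated in the direction transverse to it while the rest of $K$ is adjusted, again fixing every point of $\partial K$.

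The main obstacle is making these deformations honest elements of $\openmoduli{\alpha}$, that is, keeping every length positive and preserving the combinatorial type. Since $\openmoduli{\alpha}$ is relatively open, it suffices to produce a nonzero tangent vector, and small perturbations in that direction stay inside the polyhedron. Flat vertices are the delicate case, and there we would check the dimension count directly against Lemma \ref{lemma:dim_moduli_space}.
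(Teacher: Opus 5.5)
The paper gives no proof of this lemma (it calls it immediate), so your sketch has to stand on its own. Your reduction to the kernel of the linear part of $Ev$ on the relatively open polyhedron $\openmoduli{\alpha}$ is fine. Both directions after that have genuine gaps. In fact both halves of the equivalence, read literally, fail for degenerate types, and your arguments break exactly there.

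\textbf{The ``if'' direction.} At a flat vertex, balancing and the fixed directions determine only the line carrying all edges at $V$, not $h(V)$. Sliding an unmarked flat vertex slightly along that line changes only the lengths of its incident edges. Every other vertex and every marked point stays fixed, so this is a nonzero kernel vector. For example, take two weight-one edges coming from marked points on a horizontal line and merging at $V$ into a weight-two edge. Then $Ev|_{\openmoduli{\alpha}}$ is not injective, although the component is a tree with one unfixed end. A contracted bounded edge has the same effect, since its length is invisible to $Ev$. So you must assume there are no unmarked flat vertices and no contracted edges. Under that assumption every vertex has two non-parallel incoming edges, and your induction goes through.

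\textbf{The ``only if'' direction.} Varying the lengths of a cycle subject to $\sum\ell_E\oa(E)=0$ moves the cycle vertices. Their remaining edges then miss the marked points they must reach. That $\partial K$ can be kept fixed is precisely what needs proof; it is not a consequence. Consider a triangular cycle whose vertices each send an edge to a marked point. Translations and scaling give three parameters against three incidence conditions, and a deformation survives only because those conditions are linearly dependent.

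Likewise, ``translate the path between the two free ends and adjust the rest'' is unjustified, and it is false without trivalence. Take a $4$-valent vertex with edges of directions $(\pm1,0),(0,\pm1)$, marked points on the left and lower edges, and the other two edges unfixed ends. The vertex is pinned by the two marked points, so $Ev$ is injective although the component has two unfixed ends.

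The missing step is a linear count for trivalent $K$. Let $K^0$ be its unmarked vertices, $K^1$ its internal edges, and $e_K$ its number of unfixed ends. The displacements $\dot h(V)$, $V\in K^0$, satisfy one equation per edge of $K^1$ and one per boundary half-edge or fixed end. Trivalence together with $g_K=|K^1|-|K^0|+1$ gives a kernel of dimension at least $e_K+g_K-1$. When $e_K=0$, summing $\dot h(V)\wedge\sum_{E\ni V}\oa_V(E)=0$ over $V\in K^0$ gives a linear relation among the equations, which raises the bound to $g_K$. Hence the kernel is nonzero whenever $g_K\ge1$ or $e_K\ge2$.

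Your closing appeal to the dimension lemma for $\openmoduli{\alpha}$ cannot replace this count. That lemma bounds $\dim\openmoduli{\alpha}$ from above, whereas you need a lower bound on the fibres of $Ev$.
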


From now on, set $ n = |\Delta|+g-1 $, so that $Ev$ is a map of polyhedral complexes of the same dimension.
We say that a combinatorial type $\alpha$ is \emph{enumeratively essential} if there exists a combinatorial type $\beta$ with $\openmoduli{\alpha}\subseteq \moduli{\beta}$ and the image of $\openmoduli{\beta}$ under $\text{Ev}$ has maximal dimension.

In this setting Lemma \ref{lemma:dim_moduli_space} and Lemma \ref{lemma:Ev_injective} imply the following characterization of the generic case of the evaluation map.
\begin{lemma}\label{lemma:Ev_generic}
	Let $\openmoduli{\text{generic}} \subseteq \modulispace$ be the set of trivalent regular genus $g$ tropical curves in which all marked points $\abstroppoint_i$ with $u_i=0$ are disjoint from each other and from the vertices of $\Gamma$.
	Then $Ev(\modulispace\setminus \openmoduli{\text{generic}})$ is of positive codimension in $ \prod_{i=1}^n \nicefrac{\R^2}{u_i\R} $.
\end{lemma}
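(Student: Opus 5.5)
The plan is to decompose $\modulispace\setminus \openmoduli{\text{generic}}$ into the relatively open cells $\openmoduli{\alpha}$, over the finitely many combinatorial types $\alpha$ that can occur, and to show that each image $Ev(\openmoduli{\alpha})$ has dimension strictly less than $n$. Here
\[ n=\dim \prod_{i=1}^n \nicefrac{\R^2}{u_i\R} = 2|\setc{i}{u_i=0}| + |\setc{i}{u_i\neq0}|. \]
Finiteness of the set of types follows from the fixed degree $\Delta$ and genus $g$. The edge directions are bounded by the Newton polygon $P$, since every tropical curve of degree $\Delta$ is dual to a subdivision of $P$, and the number of vertices is bounded once there are no bivalent vertices. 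A finite union of polyhedra of dimension $<n$ then has positive codimension, since $Ev$ is affine on each $\openmoduli{\alpha}$.

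Fix a type $\alpha$ not in the generic locus. Because $Ev|_{\openmoduli{\alpha}}$ is affine, $\dim Ev(\openmoduli{\alpha})\le \dim\openmoduli{\alpha}$, with equality only if $Ev|_{\openmoduli{\alpha}}$ is injective. Lemma \ref{lemma:dim_moduli_space} gives
\[ \dim\openmoduli{\alpha}\le |\Delta|+g-1+|\setc{i}{u_i=0}|, \]
with equality exactly when $\Gamma$ is trivalent and the marked points with $u_i=0$ are pairwise disjoint and avoid vertices. The first step is to check that this maximal value equals the target dimension. With the convention $n=|\Delta|+g-1$, matching requires that the ends with $u_i\ne0$ are accounted for in the bookkeeping, and I would verify this identification carefully against the count in Lemma \ref{lemma:dim_moduli_space}. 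The failure of genericity then splits into two cases:

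\textbf{(a)} $\Gamma$ is not trivalent, or the $u_i=0$ points collide with each other or with vertices. In this case the dimension inequality is strict, so the image has dimension $<n$.

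\textbf{(b)} The curve is trivalent with the points in general position but is not regular. In this case some component of $\ogamma\setminus\bs{\abstroppoint}$ either has positive genus or contains zero or at least two univalent (unmarked) ends. If it has positive genus or at least two unmarked unbounded ends, Lemma \ref{lemma:Ev_injective} shows $Ev|_{\openmoduli{\alpha}}$ is not injective. An affine non-injective map has a positive-dimensional kernel direction, so the image dimension drops below $\dim\openmoduli{\alpha}\le n$.

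The main obstacle is the remaining subcase: a component that is simply connected but contains no unmarked univalent vertex. I would rule it out by counting, working against the dimension equality rather than assuming it. In the maximal-dimensional trivalent situation, an Euler characteristic computation shows that the $n$ marked points cut $\ogamma$ into exactly $|\Delta|-|\setc{i}{u_i\ne0}|$ components if all are trees. Each component carries at most one unfixed end if $Ev$ is injective, and the number of unfixed ends is $|\Delta|-|\setc{i}{u_i\ne0}|$. So a component with no unfixed end forces either a cycle in some component or a component with two unfixed ends, which is the case already handled by non-injectivity. Combining (a) and (b), every non-generic cell has image of dimension $<n$, which proves the lemma.
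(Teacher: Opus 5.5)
Your strategy is the paper's own: its proof is just the remark that Lemma~\ref{lemma:dim_moduli_space} and Lemma~\ref{lemma:Ev_injective} imply the statement. The Euler characteristic pigeonhole you add for bounded tree components is the right supplement. There is, however, one hole in your case split. The generic locus is cut out by three conditions (trivalent, regular, genus $g$), and you never address the third, although your Euler characteristic count tacitly assumes it. $\modulispace$ does contain degenerate types with $\gen(\Gamma)=g'<g$ as faces of the cells $\moduli{\alpha}$. For instance, shrinking a collinear two-edge cycle to a point leaves a bivalent vertex, which is erased, so the limit is trivalent of genus $g-1$, with the points with $u_i=0$ still in edge interiors. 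Such a type escapes your case (a) as you state it, and in case (b) your count breaks down. Now $\chi(\ogamma\setminus\bs{\abstroppoint})=|\Delta|-|\setc{i}{u_i\neq 0}|+(g-g')$, so if all components are trees there are $g-g'$ more components than unfixed ends. Hence bounded tree components are compatible with $Ev|_{\openmoduli{\alpha}}$ being injective, and the pigeonhole yields no contradiction. The repair is to use the dimension bound with the actual genus, $\dim\openmoduli{\alpha}\le|\Delta|+\gen(\Gamma)-1+|\setc{i}{u_i=0}|$. This is how the paper uses \cite[Proposition 3.9]{Gathmann2007} in the proof of Lemma~\ref{lemma:Ev_codim1}, where the codimension of a type is bounded below by $g-\gen(\Gamma)$. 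It moves every lower-genus type into case (a), after which $\gen(\Gamma)=g$ is available and your pigeonhole argument in (b) is valid.

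A smaller point: your first display conflates two meanings of $n$. In the paper $n$ is the number of marked points, and the target has dimension $2|\setc{i}{u_i=0}|+|\setc{i}{u_i\neq0}|=n+|\setc{i}{u_i=0}|$. With $n=|\Delta|+g-1$ this equals the maximal value $|\Delta|+g-1+|\setc{i}{u_i=0}|$ exactly, so the ``bookkeeping'' you defer is immediate and there is nothing further to verify.
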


The codimension $1$ case is a straightforward consequence of \cite[Proposition 3.9]{Gathmann2007}.
\begin{lemma}\label{lemma:Ev_codim1}
	Let $\langle u_i \rangle_{i=1}^n$ be a type of tropical point conditions which is admitted by $\Delta$ and suppose that $ n = |\Delta|+g-1 $.
	Let $\Gamma\in \modulispace$ be a tropical curve of the enumeratively essential combinatorial type $\alpha$ and suppose that $\dim \openmoduli{\alpha} = n + |\setc{i}{u_i=0}| - 1$ (i.e. of codimension $1$ in $ \modulispace $).
	Then $\Gamma$ is of genus $g$ and one of the following holds:
	\begin{enumerate}
		\item Either $\Gamma$ is regular, has one $4$-valent marked vertex and all the other vertices of $\Gamma$ are trivalent and all the marked points $\abstroppoint_i$ with $u_i=0$ are disjoint from each other and from the vertices of $\Gamma$;
		\item or $\Gamma$ is trivalent, all marked points $\abstroppoint_i$ with $u_i=0$ are disjoint from each other and from the vertices of $\Gamma$ except for a unique marked vertex $V$, and all components of $\Gamma\setminus \bs{\abstroppoint}$ are unbounded except for a unique component that contains a single edge attached to $V$;
		\item or $\Gamma$ is regular, has two $4$-valent marked vertices that are joined by a collinear cycle, all other vertices of $\Gamma$ are trivalent and all the marked points $\troppoint_i$ with $u_i=0$ are disjoint from each other and from the vertices of $\Gamma$;
	\end{enumerate}
\end{lemma}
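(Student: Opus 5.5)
The plan is a dimension count that combines Lemma \ref{lemma:dim_moduli_space} and Lemma \ref{lemma:Ev_injective}, and then a case analysis of how a codimension-one type can fail to be generic. Write $m=|\setc{i}{u_i=0}|$. The hypotheses give $\dim\openmoduli{\alpha}=n+m-1$. The target $\prod_i \nicefrac{\R^2}{u_i\R}$ has dimension $2m+(n-m)=n+m$. Since $\alpha$ is enumeratively essential, it is a face of some $\beta$ whose image under $Ev$ has full dimension $n+m$. So $\beta$ is generic in the sense of Lemma \ref{lemma:Ev_generic}: trivalent, regular, of genus $g$, with the unfixed marked points disjoint from each other and from the vertices. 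Since $\alpha$ is a degeneration of $\beta$ (edges contracted, marked points slid to vertices), contracting edges of a regular graph cannot lower the genus unless a loop collapses. A loop, however, sits on a cycle that is not cut by marked points, contradicting regularity of $\beta$. Hence $\Gamma$ has genus $g$.

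Next I would compare the dimension deficit with the defects. Following the proof of \cite[Proposition 3.9]{Gathmann2007}, the dimension of $\openmoduli{\alpha}$ equals the maximal value minus $\sum_V(\mathrm{val}(V)-3)$, minus the number of coincidences of unfixed marked points with vertices or with each other, plus the number of extra free parameters coming from flat cycles. A drop of exactly one therefore leaves three possibilities.

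\emph{(a) One $4$-valent vertex, everything else generic.} If that vertex were unmarked, then a contraction of an edge of $\beta$ joining two unmarked trivalent vertices would give a type of the same image dimension but a smaller moduli dimension. Equivalently, $Ev|_{\openmoduli{\alpha}}$ must stay injective for $\alpha$ to have codimension one in the image; I would use Lemma \ref{lemma:Ev_injective} to argue that the vertex must be marked. This gives case (1), with regularity inherited from $\beta$.

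\emph{(b) A marked point $\abstroppoint_i$ with $u_i=0$ slides onto a trivalent vertex $V$.} Cutting at $V$ now cuts three edges instead of two, so the components of $\Gamma\setminus\bs{\abstroppoint}$ change. Counting univalent ends against components shows that exactly one component acquires no unbounded end. By injectivity (Lemma \ref{lemma:Ev_injective}) and the dimension count, that component must be a single edge attached to $V$. This gives case (2).

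\emph{(c) The flat-cycle case.} A contraction can create two $4$-valent vertices, lowering the dimension by two, while a collinear cycle between them contributes one extra deformation parameter (its two parallel edges can be lengthened independently). The net deficit is then one. For this balance to be possible, the two vertices must be marked. This gives case (3).

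The main obstacle is the last bookkeeping step: showing that no other combination gives a net deficit of one. Examples are a $5$-valent vertex offset by two extra parameters, or a flat cycle attached elsewhere. I would handle this via the injectivity criterion. Any extra moduli parameter that does not change $Ev$ forces either a bounded component of positive genus or a component with two unmarked ends. Either one lowers the dimension of the image below that of $\openmoduli{\alpha}$, contradicting the requirement that the image of $\openmoduli{\alpha}$ has codimension one in the image of $\beta$. The one exception is the collinear cycle between two marked $4$-valent vertices, where the cycle is cut and the extra parameter is genuinely seen by $Ev$.
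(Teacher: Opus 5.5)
Your strategy (dimension defects as in \cite[Proposition~3.9]{Gathmann2007}, plus injectivity of $Ev$ inherited from the regeneration $\beta$) is the same as the paper's. However, steps (a) and (c) contain a genuine error. You claim that contracting a bounded edge of $\beta$ between two unmarked trivalent vertices ``gives a type of the same image dimension but a smaller moduli dimension''. This is false. $Ev$ is affine on $\moduli{\beta}$ and $\dim\openmoduli{\beta}=n+m=\dim Ev(\openmoduli{\beta})$, so $Ev$ is injective on $\moduli{\beta}$ and on each of its faces. This contraction therefore produces a face whose image has codimension exactly one. That is the basic wall of case (1), and its $4$-valent vertex is \emph{unmarked}. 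The injectivity criterion cannot force a marking: contracting an edge inside a component of $\beta\setminus\bs{\abstroppoint}$ leaves that component a tree with one unfixed end. Conversely, a $4$-valent vertex carrying some $\abstroppoint_i$ with $u_i=0$ has two defects in your own count (valence and coincidence). So it lies in codimension at least $2$, and it would also contradict the clause of (1) that such points avoid all vertices. The same applies to (c) and to the ``exception'' in your last paragraph. The two $4$-valent vertices are unmarked. What injectivity forces is a marked point in the interior of an edge of the collinear cycle, since otherwise that cycle is a positive-genus component of $\Gamma\setminus\bs{\abstroppoint}$. The paper obtains (1) and (3) precisely in the subcase $|\bs{\abstroppoint}\cap\Gamma^0|=0$. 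The word ``marked'' in those two items is inconsistent with the rest of the statement, and you should have flagged this rather than argued for it.

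Your genus argument also has a hole. A cycle of $\beta$ can be contracted even if it carries a marked point in the interior of one of its edges; that point then lands on the new vertex. Regularity of $\beta$ excludes only the contraction of a cycle free of marked points. That is the paper's case $\gen(\Gamma)=g-1$, $|\bs{\abstroppoint}\cap\Gamma^0|=0$, which it disposes of via the Euler characteristic and the bijection between the components of $\Gamma\setminus\bs{\abstroppoint}$ and those of the regeneration. A genus drop accompanied by a marked vertex has to be ruled out by the dimension count, i.e.\ by the inequality $|\bs{\abstroppoint}\cap\Gamma^0|+g-\gen(\Gamma)\le1$ with which the paper begins. Splitting into its three solutions $(\gen(\Gamma),|\bs{\abstroppoint}\cap\Gamma^0|)=(g-1,0),(g,1),(g,0)$ also replaces your open-ended ``main obstacle'' bookkeeping. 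Finally, in (b) neither injectivity nor the dimension count makes the bounded component a single edge. It is the part of the component of $V$ in $\beta\setminus\bs{\abstroppoint}$ lying beyond one of the two edges at $V$ not containing $\abstroppoint_i$, which can be a large tree. What is true, and what the statement says, is only that it contains exactly one edge incident to $V$.
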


\begin{proof}
	By \cite[Proposition 3.9]{Gathmann2007}, we get that $|\bs{\abstroppoint}\cap\Gamma^0| + g-\gen(\Gamma) \le 1$.
	Consider first the case where $\gen(\Gamma) = g-1$ and $|\bs{\troppoint}\cap\Gamma^0| = 0$.
	Since the restriction of an injective map is injective, we get by Lemma \ref{lemma:Ev_injective} that all the components of $\Gamma\setminus \bs{\abstroppoint}$ are of genus $0$ and contain at most one unbounded edge.
	Noting that since $|\bs{\abstroppoint}\cap\Gamma^0| = 0$ all the marked points $\abstroppoint_i$ with $u_i=0$ are in the interior of the edges of $\Gamma$, by a standard Euler characteristic calculation we get that $\Gamma\setminus \bs{\abstroppoint}$ contains a unique bounded component.
	For a combinatorial type $\beta$ degenerating to $\alpha$ and having $Ev(\openmoduli{\beta}) $ of maximal dimension and a tropical curve $\Gamma'$ attaining $\beta$, we get that the connected components of $\Gamma'\setminus \bs{\troppoint}$ are in bijection with the connected components of $\Gamma\setminus \bs{\troppoint}$ meaning that $\Gamma'$ is not regular in contradiction with Lemma \ref{lemma:Ev_generic}.

	The case where $\gen(\Gamma) = g$ and $|\bs{\abstroppoint}\cap\Gamma^0| = 1$ give rise to option $(2)$ in the statement of the lemma.
	
	If $\gen(\Gamma) = g$ and $|\bs{\abstroppoint}\cap\Gamma^0| = 0$, then a similar Euler characteristic calculation shows that $\Gamma\setminus \bs{\abstroppoint}$ has no bounded components, so we get options $(1)$ and $(3)$ in the statement of the lemma by \cite[Proposition 3.9]{Gathmann2007}.
\end{proof}

\subsection{Tropicalization of algebraic curves over a non-Archimedean field}\label{sec-trop}
In this section, we describe \emph{tropicalization}, i.e., the process of associating a tropical curve to an algebraic curve over $\K$.
In fact, we will see that the tropicalization of an algebraic curve produces a richer structure than just a tropical curve, which we call an \emph{enhanced tropical curve}.
Most of this section is borrowed from \cite{CNRBI}, with the necessary adaptations.

\subsubsection{Enhanced tropical curves}\label{sec:enhanced_trop_curves}
Let $(\Gamma, h, \boldsymbol{\abstroppoint})$ be a marked plane tropical curve, and let $V\in \Gamma^0$ be a non univalent vertex.
A \emph{limit curve} associated to $V$ is a tuple $((p_{V,E})_{V\in E}, \varphi_V)$, where $(p_{V,E})_E\subset \PP_\C^1$ is a collection of distinct points on $\PP_\C^1$ indexed by the adjacent edges to $V$, and $\varphi_V$ is a morphism $\PP_\C^1\setminus \{p_{V,E}\} \to (\C^*)^2 $ with $ \ord_{p_{V,E}}(\varphi_{V, i}) = (\oa_{V,E})_i$. 
For a non-flat vertex $V$, a limit curve associated to a vertex $V$ can be extended to a morphism $\varphi:\PP^1_\C\to Tor_{\C}(P_V)$, where $P_V$ is the polygon dual to $V$.
In particular, if $E\in \Gamma^1$ is an edge with endpoints $V_1$ and $V_2$, then both $\varphi_{V_1}(p_{V_1,E})$ and  $\varphi_{V_2}(p_{V_2,E})$ are contained in the mutual toric boundary $Tor_{\C}(P_{V_1})\cap Tor_{\C}(P_{V_2})$. 
We say that the limit curves are \emph{compatible along $E$} if $\varphi_{V_1}(p_{V_1,E}) = \varphi_{V_2}(p_{V_2,E})$.
An \emph{enhanced plane tropical curve} is a plane tropical curve with compatible (along all edges) limit curves associated to its vertices.

\subsubsection{Parameterized tropical limit}\label{sec-ptl}
Let $\Delta\subset\Z^2\setminus\{0\}$ be a nondegenerate, primitive, balanced multiset which induces a convex lattice polygon $P$, and let $C\in|{\mathcal L}_P|_\K$ be a reduced,
irreducible curve of genus $\gen(C)=g$, which passes through a sequence $\bs{\embalgpoint}$ of $n$ distinct points in $(\K^*)^2\subset\Tor_\K(P)$ and does not hit intersection points of toric divisors. In particular, it can be given by an
equation
$$F(x,y)=\sum_{(i,j)\in P\cap\Z^2}t^{\nu'(i,j)}(a^0_{ij}+O(t^{>0}))x^iy^j\ ,$$
where $a^0_{ij}\in\C$, $(i,j)\in P\cap\Z^2$, and $a^0_{ij}\ne0$ if the coefficient
of $x^iy^j$ in $F$ does not vanish (for instance, when $(i,j)$ is a vertex of $P$).
We then define a convex, piecewise-linear function $\nu:P\to\R$, whose graph is the lower part of
the $\conv\{(i,j,\nu'(i,j)),\ (i,j)\in P\cap \Z^2\}\subset\R^3_\Z$.
Via a parameter change $t\mapsto t^r$, we can make
$\nu(P\cap\Z^2)\subset\Z$. Denote by $\Sigma_\nu$ the subdivision
of $P$ into linearity domains of $\nu$, which are convex lattice polygons $P_1,...,P_m$. We then have
\begin{equation}F(x,y)=\sum_{(i,j)\in P\cap\Z^2}t^{\nu(i,j)}(c^0_{ij}+O(t^{>0}))x^iy^j\ ,\label{e-new60}\end{equation}
where $c^0_{ij}\ne0$ for $(i,j)$ a vertex of some of the $P_1,...,P_m$. This data defines a flat family
$\Phi:{\mathfrak X}\to\C$, where ${\mathfrak X}=\Tor(\OG(\nu))$ and
$$\OG(\nu)=\{(i,j,c)\in\R^3_\Z\ :\ (i,j)\in P,\ c\ge\nu(i,j)\}$$ is the overgraph of $\nu$. The central fiber
${\mathfrak X}_0=\Phi^{-1}(0)$ splits into the union of toric surfaces $\Tor(P_k)$, $1\le k\le m$, while the other fibers are isomorphic to
$\Tor(P)$. The evaluation of the parameter $t$ turns the given curve $C$ into an inscribed family of curves
\begin{equation*}C^{(t)}\subset{\mathfrak X}_t,\quad C^{(t)}\in|{\mathcal L}_{P}|,\quad t\in(\C,0)\setminus\{0\}\ ,\end{equation*} (where $(\C,0)$ always means a sufficiently small disc in $\C$ centered at zero)
which closes up to a flat family over $(\C,0)$ with the central element
$$C^{(0)}=\bigcup_{k=1}^mC^{(0)}_k\ ,$$ where
$$C^{(0)}_k=\left\{F^{(0)}_k(x,y):=\sum_{(i,j)\in P_k\cap\Z^2}c^0_{ij}x^iy^j=0\right\}\in|{\mathcal L}_{P_k}|,
\ 1\le k\le m\ .$$
Whenever $P_i$ and $P_j$ share an edge, the corresponding limit curves $C^{(0)}_i$ and $C^{(0)}_j$ intersect at a point of $\Tor(P_i)\cap \Tor(P_j)$.

Let $\bn:(\widehat C,\bs{\absalgpoint})\to(C,\bs{\embalgpoint})$ be the normalization (where $\bs{\absalgpoint}$ is a sequence of $n$ distinct points on $\widehat C$), or, equivalently, the family
\begin{equation}\bn_t:(\widehat C^{(t)},\bs{\absalgpoint}(t))\to (C^{(t)},\bs{\embalgpoint}(t))\hookrightarrow{\mathfrak X}_t,\quad t\in(\C,0)\setminus\{0\}\ ,
	\label{euc3}\end{equation} where
each $\widehat C^{(t)}$ is a smooth complex curve of genus $\gen(C)=g$ (cf. \cite[Theorem 1, page 73]{Tei} or \cite[Proposition 3.3]{ChL}).

After a suitable untwist $t\mapsto t^r$, the family
(\ref{euc3}) admits a flat extension to $0\in(\C,0)$
with the central element $\bn_0:(\widehat C^{(0)},\bs{\absalgpoint}(0))\to{\mathfrak X}_0$, where $\widehat C^{(0)}$ is a connected nodal curve of arithmetic genus $g$ (see, for instance \cite[Theorem 1.4.1]{AV}), none of whose components is entirely mapped to a toric divisor, $\bp(0)$ is a sequence of $n$ distinct points of $\widehat C^{(0)}$ disjoint from the intersection points of the components of $\widehat C^{(0)}$,
and such that $(\bn_0)_*(\widehat C^{(0)},\bs{\absalgpoint}(0))=(C^{(0)},\bs{\embalgpoint}(0))$.

With the central fiber one can associate an enhanced plane marked
tropical curve $(\Gamma,{\boldsymbol{\abstroppoint}}, h)$ as defined in \cite[Section 2.2.1]{Ty}. In particular (all other details can be found in \cite[Section 2]{Ty}), the vertices of $\Gamma$ bijectively correspond to components of $ C^{(0)}$, the finite edges of $\Gamma$ bijectively correspond to the intersection points of distinct components of $ C^{(0)}$, and the infinite edges of $\Gamma$ correspond to the points of $ C^{(0)}$ mapped to toric divisors $\Tor(e)$ such that $e\subset\partial P$.
Thus, we obtain the
{\it parameterized tropical limit} (briefly, {\it PTL}) of $\quad$ $(\bn:\widehat C\to C,\bs{\embalgpoint})$ to be the enhancement of the plane marked tropical curve $(\Gamma,\bs{\abstroppoint}, h)$ given by the limit curves $\varphi_V: \PP^1 \to \Tor(\delta_V)$ where $\delta_V$ is the polygon dual to $V$.

\subsection{Real tropical curves}\label{sec:real_trop_curves}

\begin{definition}
    A real tropical curve is a tropical curve $h:\Gamma\to \mbb{R}^2$ together with an involution $\bc:\ogamma\to \ogamma$ such that $h\circ \bc = h$.

A marking of a real tropical curve is always assumed to be invariant under the involution $\bc$ as a set.

    A marking $\boldsymbol{\abstroppoint}$ is called peripheral if all the points $\abstroppoint_{i}$ which are not fixed by $\bc$ are univalent vertices of $\ogamma$.

    A \emph{(peripherally) marked real tropical curve} is a real tropical curve together with a (peripheral) marking.

    The \emph{combinatorial type} of a marked real tropical curve is the combinatorial type of the underlying marked tropical curve together with the involution $\bc$ on the underlying graph. 
\end{definition}

For a real tropical curve $(\Gamma, h, \bs{\abstroppoint}, \bc)$, the quotient $(\Gamma/\bc,h/\bc,\boldsymbol{\abstroppoint}/\bc)$ is a well defined plane tropical curve. We can contract the edges, where $h/\bc$ is constant, then remove all bivalent vertices by gluing the two incident edges into one edge.
This operation results in a plane marked tropical curve which we denote $(\Gamma',h',\boldsymbol{\abstroppoint}')$. 
Setting $\Gamma^{\bc}:=Fix(\bc)$, we denote by $\Gamma'_{re}\subset\Gamma'$ the subgraph obtained from $\Gamma^{\bc}$, and put $\Gamma'_{im}=\overline{\Gamma'\setminus\Gamma'_{re}}$ be the closure of the complement in $\Gamma'$.

Suppose that $\bn: (\widehat C, \bs{\absalgpoint}) \to \Tor_{\K_\R}(P)$ is a real curve, meaning that $\widehat C$ possesses a real structure compatible with the real structure on $\Tor_{\K_\R}(P)$, and let $(\Gamma, \bs{\abstroppoint}, h, \{\varphi_V\}_{V\in \Gamma^0})$ be its parameterized tropical limit.
Then the conjugation on $\widehat C$ naturally descends to an involution $\bc:\Gamma\to\Gamma$ with the property that $p_{\bc(V), \bc(E)}=\overline{p_{V, E}}$ for all $V\in \Gamma^0$ and $E\in \Gamma^1$ that are adjacent to $V$ and $\varphi_{\bc(V)} = \varphi_V\circ \conj$ for all $V\in \Gamma^0$ where $\conj$ is the conjugation on $(\C^\times)^2$.
An enhancement of a real tropical curve satisfying those conditions is called conjugation invariant enhancement.

\section{Complex correspondence theorem}\label{sec:complex_count}

In this section we provide a tropical count of genus $g$ complex curves satisfying a given set of algebraic point conditions.

\begin{definition}\label{def:alg_point_conditions}
    Algebraic point condition in $\Tor_\K(P)$ is a pair $(\embalgpoint, m)$ where $\embalgpoint\in \Tor_\K(P)$ and $m\in \NN$, such that if $\embalgpoint\in (\K^\times)^2$, we have $m=1$.

    A curve $C\subset \Tor_\K(P)$ satisfies a sequence of algebraic point conditions $\langle (\embalgpoint_i, m_i) \rangle_{i=1}^n$ if for every $i$, $\embalgpoint_i\in C$ and for every $i$ with $m_i>1$, there is a local branch of $C$ intersecting the toric boundary divisor at $\embalgpoint_i$ with multiplicity at least $m_i$.

    The type of algebraic point conditions $\langle (\embalgpoint_i, m_i) \rangle_{i=1}^n$ is the sequence of pairs $(S, m_i)$ where $S$ is the stratum of $\Tor_\K(P)$ containing $\embalgpoint_i$.
\end{definition}

Before we proceed further, let us make the following technical assumption intended to rule out the study of unnecessary situations. Namely, we assume that for every $i$ with $\embalgpoint_i\in (\K^\times)^2$, we have
\begin{equation}\begin{cases}\embalgpoint_i=(t^{a_i}(\eta_i+O(t^M)),t^{b_i}(\zeta_i+O(t^M))),&\\ \qquad a_i,b_i\in\mathbb{Q},\ \eta_i,\zeta_i\in\R^*,\quad i=1,...,n,&\end{cases}\label{e5}\end{equation}
where $\eta_i,\zeta_i$ are chosen generically, and $M>0$ is sufficiently large.

\begin{definition}\label{def:trop_point_conditions_tropicalization}
    The tropicalization of algebraic point conditions $\langle (\embalgpoint_i, m_i) \rangle_{i=1}^n$ is defined as the tropical point conditions (see Definition \ref{def:trop_point_conditions}) $\langle (\trop(\embalgpoint_i)+u_i\R, m_i\cdot u_i) \rangle_{i=1}^n$, where $u_i$ is the primitive integral vector corresponding to the boundary divisor at $\embalgpoint_i$ (setting $u_i=0$ if $\embalgpoint_i\in (\K^\times)^2$).
\end{definition}

\begin{example}
    Let $P$ be the triangle with vertices $(0,0)$, $(2,0)$, $(0,1)$ and denote $\sigma:=\conv\{(0,0), (2,0)\}$.
    Consider the algebraic point conditions $\langle (\embalgpoint_1, 1), (\embalgpoint_2, 2) \rangle$ where $\embalgpoint_1=(1,1)\in (\K^\times)^2$ and $\embalgpoint_2=(1,0)\in \Tor_\K(\sigma)$.
    Then the condition on algebraic curves to pass through $\embalgpoint_1$ is translated to the condition on their tropicalization to pass through $\trop(\embalgpoint_1)$.
    The condition on algebraic curves to be tangent to $\Tor_\K(\sigma)$ at $\embalgpoint_2$ is translated to the condition on their tropicalization to have an unbounded end of weight $2$ with direction $(0,-1)$.
    Indeed, the tropicalization of $\langle (\embalgpoint_1, 1), (\embalgpoint_2, 2) \rangle$ as in Definition \ref{def:trop_point_conditions_tropicalization} is $\langle ((0,0), (0,0)), ((0,1)\R, 2\cdot (0,-1)) \rangle$.
\end{example}

\begin{proposition}\label{prop:complex_tangency_tropicalization}
    Fix an integer $g\ge 0$ and a lattice polygon $P$, and denote by $|\partial P|$ the number of lattice points on the boundary of $P$.
    Let $\left\langle (\embalgpoint_i, m_i) \right\rangle_{i=1}^n$ be point conditions in $\Tor_\K(P)$ with $\sum_{i=1}^n m_i = |\partial P| + g - 1$, in general position among configurations of algebraic point conditions with the given type, and assume that its tropicalization $\left\langle (\embtroppoint_i, u_i) \right\rangle_{i=1}^n$ is in general position among tropical point conditions with the same type.
    Let $C$ be a genus $g$ curve of degree $d$ satisfying $\left\langle (\embalgpoint_i, m_i) \right\rangle_{i=1}^n$, and let $\Gamma$ be the tropicalization of $C$.
    Then $\Gamma$ satisfies the tropical point conditions $\left\langle (\embtroppoint_i, u_i) \right\rangle_{i=1}^n$, it has genus $g$, it is trivalent, regular, and all its unfixed ends have multiplicity $1$.
\end{proposition}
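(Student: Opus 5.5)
We take the parameterized tropical limit of $C$ from Section \ref{sec-ptl}, giving an enhanced marked tropical curve $(\Gamma,\bs{\abstroppoint},h)$ of degree $\Delta$ inducing $P$. The plan is a dimension count: by Lemma \ref{lemma:Ev_generic}, the tropical conditions are generic, so $\Gamma$ lies in $\openmoduli{\text{generic}}$ once we show $\Gamma$ satisfies the conditions with the right number of marked ends.

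\emph{Step 1: $\Gamma$ satisfies the tropical conditions.} For $\embalgpoint_i\in(\K^\times)^2$ the marked point $\absalgpoint_i$ lies on $\widehat C$, and $h(\abstroppoint_i)=\trop(\embalgpoint_i)$ because tropicalization commutes with evaluation. For $\embalgpoint_i$ on a toric divisor $\Tor(\sigma)$, the local branch of $C$ through $\embalgpoint_i$ with intersection multiplicity $\ge m_i$ yields a point of $\widehat C$ that specializes to a point of $\widehat C^{(0)}$ mapped to $\Tor(\sigma)$. Thus it corresponds to an unbounded end $E$ of $\Gamma$ with direction the outer normal $u_i$ and weight equal to that intersection multiplicity, so the weight is at least $m_i$. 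The position of this end modulo $u_i\R$ is the valuation of the coordinate of $\embalgpoint_i$ along $\Tor(\sigma)$. Distinct $\embalgpoint_i$ give distinct ends, since the points are distinct on the boundary. Hence $\Gamma$, marked at these univalent vertices, satisfies $\langle(\embtroppoint_i,m_iu_i)\rangle$. Its genus is at most $g$, because $\widehat C^{(0)}$ has arithmetic genus $g$ and $b_1(\Gamma)\le g$.

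\emph{Step 2: dimension count.} Suppose $\Delta$ has $N$ ends, and let $\Gamma$ have genus $g'\le g$ with $k=|\{i:u_i=0\}|$. Lemma \ref{lemma:dim_moduli_space} gives $\dim\openmoduli{\alpha}\le N+g'-1+k$. The target $\prod\R^2/u_i\R$ has dimension $2k+(n-k)$. The conditions are in general position and the image has dimension at most $\dim\openmoduli{\alpha}-k$ (the $k$ interior markings each cost their own edge parameter). We therefore need
\[
N+g'-1\ \ge\ n=k+(n-k).
\]
On the other hand, $N\le \sum_{\text{ends}}1$, and the total weight of ends equals $|\partial P|$. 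The $n-k$ fixed ends already carry weight at least $\sum_{u_i\ne0}m_i$. Hence
\[
N\le (n-k)+\Big(|\partial P|-\sum_{u_i\ne 0}m_i\Big)=(n-k)+\big(|\partial P|-(|\partial P|+g-1-k)\big)=n-g+1 .
\]
Combining the two inequalities gives $n-g+1+g'-1\ge n$, so $g'=g$ and all inequalities are equalities. Equality forces every fixed end to have weight exactly $m_i$, and every unfixed end to have weight $1$. Equality in Lemma \ref{lemma:dim_moduli_space}, together with the maximality of the image, places $\Gamma$ in $\openmoduli{\text{generic}}$ by Lemmas \ref{lemma:Ev_injective} and \ref{lemma:Ev_generic}: it is trivalent and regular, with interior markings off the vertices.

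\emph{Main obstacle.} The hardest step is justifying that the PTL genuinely realizes the tangency as a single end of weight $\ge m_i$, with no splitting of the branch into several ends, and also handling degenerate possibilities. These include contracted edges, flat vertices, and marked points falling on vertices, where $\Gamma$ as a parametrized curve could a priori have excess dimension, as well as the possibility that the plane image loses genus. For these I would argue that any such $\Gamma$ lies in a combinatorial type whose evaluation image has positive codimension, which contradicts general position of the tropicalized conditions. The branch issue should follow because a single analytic branch at $\embalgpoint_i$ corresponds to a single point of $\widehat C$ and hence a single end.
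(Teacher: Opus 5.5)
Your proposal is correct and follows essentially the paper's proof. It uses the same dimension count, bounding $\dim\openmoduli{\alpha}\le N+g'-1+k$ by the dimension lemma for $\openmoduli{\alpha}$ and the number of ends by $N\le|\partial P|-\sum_{u_i\ne0}(m_i-1)$; equality then forces genus $g$, trivalence and weight-one unfixed ends, and regularity follows from injectivity of $Ev$ on $\openmoduli{\alpha}$. One small slip: the image $Ev(\openmoduli{\alpha})$ has dimension at most $\dim\openmoduli{\alpha}$, not $\dim\openmoduli{\alpha}-k$, and it is comparing this with the target dimension $n+k$ that gives your (correct) inequality $N+g'-1\ge n$.
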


\begin{proof}
    The fact that $\Gamma$ satisfies the tropical point conditions $\langle (\embtroppoint_i, u_i) \rangle_{i=1}^n$ and has genus at most $g$ is a standard result about tropicalization of algebraic curves.
    Let $\Delta$ be the degree of $\Gamma$.
    Since $\Gamma$ satisfies the tropical point conditions $\langle (\embtroppoint_i, u_i) \rangle_{i=1}^n$, we have $|\Delta| \le |\partial P| - \sum_{u_i\ne 0} (m_i-1)$.
    By the assumption on the general position of the tropicalization of point conditions, we have 
    \[ \dim Ev(\openmoduli{[\Gamma]}) = n + |\setc{i}{u_i=0}| \]
    meaning that $\dim \openmoduli{[\Gamma]} \ge n + |\setc{i}{u_i=0}|$ and if there is an equality then $Ev|_{\openmoduli{[\Gamma]}}$ is injective.
    By Lemma \ref{lemma:dim_moduli_space}, we have 
    \begin{align*}
    \dim \openmoduli{[\Gamma]} & \le |\Delta|+g-1+|\setc{i}{u_i=0}| \\
    &\le |\partial P| - \sum_{u_i\ne 0} (m_i-1) + g - 1 + |\setc{i}{u_i=0}| \\
    &= |\partial P| - \sum_{i=1}^{n} m_i +n +g -1 + |\setc{i}{u_i=0}| = n + |\setc{i}{u_i=0}|
    \end{align*}
    so in fact there is equality.
    Thus, by Lemma \ref{lemma:dim_moduli_space}, $\Gamma$ is trivalent of genus $g$ and $|\Delta| = |\partial P| - \sum_{u_i\ne 0} (m_i-1)$ so all the unfixed ends have multiplicity $1$.
    Lemma \ref{lemma:Ev_injective} implies that all components of $\ogamma\setminus \bs{\abstroppoint}$ are of genus $0$ and contain at most one unfixed end, an Euler characteristic calculation shows that there are no components with no unfixed ends, meaning that $\Gamma$ is regular.
\end{proof}

We can now state the main result of this section.
\begin{theorem}\label{thm:complex_tangency_num}
    Let $\left\langle (\embalgpoint_i, m_i) \right\rangle_{i=1}^n$ be point conditions in $\Tor_\K(P)$ that satisfy the assumptions of Proposition \ref{prop:complex_tangency_tropicalization}, and let $\Gamma$ be a genus $g$ tropical curve of degree $\Delta$ satisfying its conclusions.
    Then the number of genus $g$ curves in $|\mathcal{L}_P|$ satisfying $\left\langle (\embalgpoint_i, m_i) \right\rangle_{i=1}^n$ and tropicalizing to $\Gamma$ is equal to
    \begin{equation}\label{eq:complex_tangency_num}
        \prod_{V\in \Gamma^0}\mu(V)\cdot \prod_{E\in \Gamma^1_{\infty}}\wt(E)^{-1}.
    \end{equation}
\end{theorem}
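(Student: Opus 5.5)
We follow the standard patchworking scheme of Mikhalkin and Shustin. We count the enhancements of $\Gamma$ that are compatible with the point conditions, and then show that each such enhancement deforms uniquely to a curve over $\K$. By Proposition \ref{prop:complex_tangency_tropicalization}, $\Gamma$ is trivalent, regular, of genus $g$, and its unfixed ends have weight $1$. So the count reduces to two things: (a) the number of collections of limit curves $\{\varphi_V\}$ compatible along edges and with the initial terms of the points $\embalgpoint_i$; (b) a transversality statement ensuring that each such collection lifts to exactly one curve.

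For (a) we use the regular orientation on $\Gamma$ (Definition \ref{def:regular_trop_curve}) and determine the limit curves inductively along it. Start at the marked points. A marked point $\abstroppoint_i$ with $u_i=0$ lies in the interior of an edge $E$. The limit data there is a binomial-type curve through $\ini(\embalgpoint_i)$, so it is determined uniquely once its weight-$\wt(E)$ cover structure is taken into account. A fixed end with $m_i\ge 1$ is attached to the boundary point $\embalgpoint_i$ with tangency $m_i=\wt(E)$. At a trivalent vertex $V$ with two incoming edges whose boundary points on $\Tor(P_V)$ are already prescribed, the rational curve $\varphi_V:\PP^1\to\Tor(P_V)$ with three boundary points is a trinomial curve. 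The number of such curves matching the two prescribed intersection points with the corresponding toric divisors (which are themselves $\wt(E_1)$- resp. $\wt(E_2)$-fold tangency points) is $\mu(V)/(\wt(E_1)\wt(E_2))$ — a standard lattice computation. Multiplying over all vertices, each bounded edge weight appears twice in the denominator (once for each endpoint) and once more from the edge gluing. Counting the edge gluing gives a factor $\wt(E)$ per bounded edge, coming from the choices of matching the $\wt(E)$-th roots in the deformation pattern at a node of the central fiber (Shustin's refined patchworking). A bounded edge therefore contributes net $\wt(E)^{-1}$ from vertex counts and $\wt(E)$ from the edge count, which cancel. Ends contribute a single $\wt(E)^{-1}$ from the unique vertex they touch. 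Unfixed ends have weight $1$; for fixed ends, the inverse weight is not compensated, because the boundary point is prescribed. This yields exactly \eqref{eq:complex_tangency_num}.

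For (b) we use the patchworking theorem from \cite{Ty}/\cite{CNRBI} adapted to tangency conditions at the boundary. First, each limit curve is immersed and peripherally unibranch, with transverse (or prescribed-order) boundary behavior. Second, the linearized deformation problem at the central fiber, with the point conditions imposed, has trivial kernel and cokernel. The key input is that regularity of $\Gamma$ makes the conditions "flow" along the oriented tree components, so that $H^1$ of the relevant twisted sheaf on each limit curve vanishes. On a rational curve $\PP^1$, the line bundle of relevant degree minus the imposed incoming conditions has degree $\ge -1$. An implicit function theorem over $\K$ (Newton iteration in $t$) then gives a unique curve over $\K$ for each enhancement and each choice of edge deformation pattern. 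Conversely, every curve counted tropicalizes to such data by Section \ref{sec-ptl}.

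The main obstacle is step (b) at fixed ends of high weight: we must show that the tangency condition of order $m_i$ at the prescribed point $\embalgpoint_i$ imposes $m_i$ independent conditions transversal to the deformations. For this we would reparametrize near the boundary divisor, write the tangency as vanishing of the first $m_i$ coefficients of a local parametrization, and check the independence on the limit curve at the adjacent vertex via the $h^1=0$ argument above.
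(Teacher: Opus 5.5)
Your architecture is the same as the paper's: count enhancements along the regular orientation using the $\mu(V)/(\wt(E_1)\wt(E_2))$ vertex count, then count edge data for refined patchworking, then lift each datum uniquely. The problem is the edge bookkeeping, which goes wrong exactly where the paper needs an extra construction.

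The vertex count involves only the two \emph{incoming} edges at $V$. So an unmarked bounded edge enters a denominator once (at its head), not ``twice (once for each endpoint)''. Your next sentence (``net $\wt(E)^{-1}$ from vertex counts'') contradicts the previous one.

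``Twice'' is correct precisely for a bounded edge $E$ containing a marked point $\abstroppoint_i$ with $\embalgpoint_i\in(\K^\times)^2$. Both halves of $E$ are oriented away from $\abstroppoint_i$, so both endpoints see $E$ as incoming, with the point on $\Tor(\sigma)$ fixed by $\ini(\embalgpoint_i)$. The enhancement count therefore carries $\wt(E)^{-2}$ for such an edge; this is the extra factor $\prod_{\abstroppoint_i\in E}\wt(E)^{-1}$ in the paper's enhancement lemma.

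To cancel it you need $\wt(E)^2$ lifts per enhancement. Instead you say the data at a marked point is ``determined uniquely'', allot ``a factor $\wt(E)$ per bounded edge'', and claim in (b) a unique lift per enhancement and deformation pattern. Taken literally, this undercounts by a factor $\wt(E)$ for every marked bounded edge of weight $>1$, and such edges occur generically.

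The missing idea is the modification of marked edges:
\begin{itemize}
\item Make $E$ horizontal and shift $y\mapsto y_1+\eta_i$, so that $\embalgpoint_i$ tropicalizes onto a new vertical end.
\item Replace $E$ by two vertices dual to $\conv\{(-1,0),(0,0),(0,\ell)\}$ and $\conv\{(0,0),(1,0),(0,\ell)\}$.
\item The first limit curve $(y-\tau)^\ell+ax^{-1}=0$ has $\ell$ choices, since $a$ and $\tau^\ell$ are fixed.
\item The second limit curve is unique.
\item The new unmarked edge of weight $\ell$ has another $\ell$ modifications.
\end{itemize}
This gives $\ell^2$ in total. Alternatively, you could declare $\abstroppoint_i$ a bivalent vertex splitting $E$ into two weight-$\ell$ edges with $\ell$ gluings each, in Nishinou--Siebert style. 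But then you must state this and justify the $\ell$ lifts on each half, which is exactly what the modification does.

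With this correction the tally works:
\begin{itemize}
\item unmarked bounded edges give $\wt(E)^{-1}\cdot\wt(E)$;
\item marked bounded edges give $\wt(E)^{-2}\cdot\wt(E)^{2}$;
\item fixed ends give an uncompensated $\wt(E)^{-1}$;
\item unfixed ends have weight $1$.
\end{itemize}
Together these yield the stated formula. By contrast, the fixed ends of high weight that you single out as the main obstacle need no special treatment in the paper. The adjacent limit curve is peripherally unibranch, and transversality there is absorbed into the cited patchworking theorem.
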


\begin{proof}
    The proof is split into several steps.
    In Lemma \ref{lemma:complex_enhancements_count} we count the number of enhancements of $\Gamma$ that satisfy the algebraic point conditions $\left\langle (\embalgpoint_i, m_i) \right\rangle_{i=1}^n$.
    In Lemma \ref{lemma:number_of_modifications} we count the number of modifications of $\Gamma$ that satisfy the algebraic point conditions $\left\langle (\embalgpoint_i, m_i) \right\rangle_{i=1}^n$.
    Finally, in Lemma \ref{lemma:complex_patchworking} we show that the number of modifications is equal to the desired number of algebraic curves.
\end{proof}

\begin{definition}
    We say that an enhanced plane tropical curve $(\Gamma, h, \bs{\abstroppoint}, \{\varphi_V\}_{V\in \Gamma^0})$ satisfies algebraic point conditions $\left\langle (\embalgpoint_i, m_i) \right\rangle_{i=1}^n$ if:
    \begin{enumerate}
        \item The plane tropical curve $(\Gamma, h, \bs{\abstroppoint})$ satisfies tropical point conditions $\left\langle (\trop(\embalgpoint_i)+u_i\R, m_i\cdot u_i) \right\rangle_{i=1}^n$.
        
        \item For every marked bounded edge $E$, dual to $\sigma$, with endpoints $V_1$ and $V_2$ and marking $\abstroppoint_i$, the mutual point of intersection with the toric boundary $\varphi_{V_1}(\absalgpoint_{V_1,E}) = \varphi_{V_2}(\absalgpoint_{V_2,E})\in \Tor_\C(\sigma)$ is the one defined by $\ini(\embalgpoint_i)$.
        
        \item For every fixed end $E$ of $\Gamma$ dual to $\sigma$ marked by $\abstroppoint_i$, and adjacent to the trivalent vertex $V$, the intersection with the toric boundary $\varphi_{V}(\absalgpoint_{V,E})$ is the one defined by $\ini(\embalgpoint_i)$.
    \end{enumerate}
\end{definition}

\begin{lemma}\label{lemma:complex_enhancements_count}
    Let $(\Gamma, h, \bs{\abstroppoint})$ be a trivalent plane tropical curve that satisfies the tropical point conditions $\left\langle (\trop(\embalgpoint_i)+u_i\R, m_i\cdot u_i) \right\rangle_{i=1}^n$.
    Then there exist
    \begin{equation*}
        \prod_{V\in \Gamma^0}\mu(V)\cdot \prod_{E\in \Gamma^1}\wt(E)^{-1} \cdot \prod_{\substack{E\in \Gamma^1 \\ \abstroppoint_i\in E \\ \embalgpoint_i\in (\K^\times)^2}}\wt(E)^{-1}
    \end{equation*}
isomorphism classes of enhancements of $(\Gamma, h, \bs{\abstroppoint})$ that satisfy the algebraic point conditions $\left\langle (\embalgpoint_i, m_i) \right\rangle_{i=1}^n$.
\end{lemma}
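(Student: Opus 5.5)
Because $\Gamma$ is regular, we will use the regular orientation (Definition \ref{def:regular_trop_curve}) and build the enhancements vertex by vertex. The count is a standard local computation, as in Mikhalkin's correspondence and \cite{CNRBI}. Each trivalent vertex $V$ has dual triangle $P(V)$. A limit curve at $V$ is a rational curve in $\Tor_\C(P(V))$ meeting the three toric divisors at single points with the prescribed orders. Such curves are of the form $\varphi_V(\tau)=(\alpha\tau^{a}(\tau-1)^{b},\beta\tau^{c}(\tau-1)^{d})$ with $p_{V,E}\in\{0,1,\infty\}$. The basic fact we will use is the following: for a trivalent vertex whose two incoming edges already have their boundary intersection points fixed, the number of limit curves through these two points, modulo reparametrization, is $\mu(V)/(\wt(E_1)\wt(E_2))$. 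This is the degree of the map from the two-dimensional torus of parameters $(\alpha,\beta)$ to the product of the two one-dimensional boundary tori. That map is a homomorphism whose determinant is $\mu(V)$ up to the lattice-length factors of the two edges.

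The induction goes along the regular orientation, starting from the markings.
\begin{itemize}
\item For a fixed end $E$ marked by $\abstroppoint_i$ with $\embalgpoint_i$ on a boundary divisor, condition (3) fixes the point on $\Tor_\C(\sigma)$. This gives one initial datum.
\item For a marked point $\abstroppoint_i$ in the interior of an edge $E$ with $\embalgpoint_i\in(\K^\times)^2$, condition (2) fixes the common boundary point on $\Tor_\C(\sigma)$ via $\ini(\embalgpoint_i)$. Both half-edges then point away from $\abstroppoint_i$ and carry known intersection data.
\item At an unmarked trivalent vertex with both incoming edges known, we get $\mu(V)/(\wt(E_1)\wt(E_2))$ choices. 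Each choice determines the outgoing point $\varphi_V(p_{V,E_{out}})$, which is then passed on as incoming data for the next vertex. Compatibility along edges holds by construction.
\end{itemize}
Regularity guarantees that every vertex has exactly two incoming edges, and that the process terminates at the unique unfixed unbounded end with no further constraint.

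Next we collect the factors. Every bounded edge except the interior-marked ones appears exactly once as an incoming edge, contributing one factor $\wt(E)^{-1}$. A bounded edge split by an interior marking appears twice, once as incoming at each endpoint, contributing $\wt(E)^{-2}$. The fixed ends also appear once as incoming edges. Unfixed ends are outgoing and have weight $1$ by Proposition \ref{prop:complex_tangency_tropicalization}, so a factor $\wt(E)^{-1}$ for them is harmless. Multiplying over vertices therefore gives exactly $\prod_V\mu(V)\prod_{E}\wt(E)^{-1}\prod_{E\ni\abstroppoint_i,\,\embalgpoint_i\in(\K^\times)^2}\wt(E)^{-1}$, which is the claimed number.

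The main obstacle is the local vertex count, in particular pinning down the right notion of isomorphism so that no extra factors appear. Two points need care. First, reparametrizations of $\PP^1$ permuting the three points must be excluded; labeling the points by edges handles this. Second, the boundary point on $\Tor_\C(\sigma)\cong\C^*$ of an edge of weight $w$ corresponds to $w$ possible local branches, but it counts once as a point. I would first verify the determinant claim by writing the boundary evaluation as the character map $(\alpha,\beta)\mapsto(\alpha^{x}\beta^{y})$ for primitive covectors of each edge. Its kernel then has order $|\det|=\mu(V)/(\wt(E_1)\wt(E_2))$, using $\oa(E_j)=\wt(E_j)\cdot$primitive.
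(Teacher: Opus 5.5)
Your proposal is correct and is essentially the paper's argument: you traverse $\Gamma$ along the regular orientation, use the local count of $\mu(V)/(\wt(E_1)\wt(E_2))$ limit curves through the prescribed points on the divisors dual to the two incoming edges, and multiply, and you correctly invoke the weight-one condition on unfixed ends (from the preceding tropicalization proposition) so that the product matches the stated formula. The only difference is that the paper simply cites \cite[Lemma 3.5]{Shustin_tropical_enum05} for the local count and leaves the edge bookkeeping implicit, whereas you verify the local count directly as the degree of the boundary character map on the torus of parameters $(\alpha,\beta)$.
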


\begin{proof}
    If $V\in\Gamma^0$ is a trivalent vertex dual to a triangle $P'\subset P$, then $\PP^1\to C'\in|{\mathcal L}_{P'}|$ is a peripherally unibranch curve that matches points on the toric divisors dual to the incoming edges $E_1,E_2$ (with respect to the regular orientation on $\Gamma$) incident to $V$; these points are determined by the points of $\bp$ and by previously chosen admissible limit curves. By \cite[Lemma 3.5]{Shustin_tropical_enum05}, there are $\mu(V)(\wt(E_1)\wt(E_2))^{-1}$ choices for such a curve.
\end{proof}

To get a unique lift to an algebraic curve we need to modify the tropical limit as described in \cite{CNRBI}. We will repeat the relevant parts of the construction and definition here for completeness.

\begin{Construction}\label{con:modified_tropical_limit}
    Let $[\bn:(\widehat C,\bs{\absalgpoint})\to\Tor_\K(P)]$ be a curve.
    We define the \emph{modified tropical limit of $C$} to be the parameterized tropical limit of $\widehat C$, together with \emph{modification data} associated to its bounded edges as follows:
    \begin{enumerate}
        \item The modification of an unmarked bounded edge $E$ of weight $\ell>1$ is described in \cite[Sections 3.5 and 3.6]{Shustin_tropical_enum05} and \cite[Section 2.2(2)]{GS2021}. It is shown that if we perform a toric change of coordinates that makes $E$ horizontal, then there exists a unique $\eta \in \K$ such that the coefficient of $y^{\ell-1}$ in the polynomial $F'(x,y):=F(x,y+\eta)$ vanishes, and this results in adding a stratum isomorphic to $\Tor_\C(P_{mod})$ to the degeneration of surfaces, where $$P_{mod}:=\conv\{(-1,0),(1,0),(0,\ell)\}.$$
        We record this toric change of coordinates, the shift $\eta$, and the limit curve $\PP^1 \to C_{mod}\subset\Tor_\C(P_{mod})$ as the modification data associated to $E$.

        \item For a marked edge $E$ of $\Gamma$ of weight $\ell>1$ containing a point $\abstroppoint_i$, assume that the $h$-image of $E$ is horizontal (see Figure \ref{fcn2}(a)), write $\embalgpoint_i=(\zeta_i+O(t^M),\eta_i+O(t^M))$, and perform the coordinate change $y=y_1+\eta_i$. Since the limit curves associated with the trivalent vertices $V_1,V_2$ are smooth at their common point (see \cite[Lemma 3.5]{Shustin_tropical_enum05}), the Newton polygons of their defining polynomials after this coordinate change leave the triangle $P_{mod}=\conv\{(-1,0),(1,0),(0,\ell)\}$ empty (Figure \ref{fcn2}(b), cf. \cite[Figure 2.18]{ItenbergMikhalkinShustin2007}). Furthermore, the new coordinates of the point $\embalgpoint_i$ are $(\zeta_i+O(t^M),O(t^M))$; hence, since $M\gg0$, its tropicalization $\trop(\embalgpoint_i)$ lies on the vertical end oriented to $-\infty$. Taking into account that the union of limit curves in the modification must have arithmetic genus zero, we obtain a tropical modification consisting of two trivalent vertices $V_1'$ and $V_2'$ joined by a horizontal edge $E'$, as shown in Figure \ref{fcn2}(c).
        Its dual subdivision is the two triangles
        $$P_{1,mod}=\conv\{(-1,0),(0,0),(0,\ell)\},\quad P_{2,mod}=\conv\{(0,0),(1,0),(0,\ell)\},$$ (Figure \ref{fcn2}(d)), while the limit curves $\PP^1\to C_{1,mod}$, $\PP^1\to C_{2,mod}$ are peripherally unibranch.
        
        The modification data associated to $E$ is the toric change of coordinates that makes $E$ horizontal, the curves $C_{1, mod}, C_{2, mod}$, and the modification data associated to the edge $E'$ in the modified curve.
    \end{enumerate}
\end{Construction}

\begin{figure}
    \setlength{\unitlength}{1mm}
    \begin{picture}(145,35)(0,0)
        \thinlines
        \dashline{2}(70,30)(100,30)
        \put(35,10){\vector(1,0){30}}\put(50,10){\vector(0,1){20}}
        \put(105,10){\vector(1,0){30}}\put(120,10){\vector(0,1){20}}
        
        \thicklines
        {\color{blue}
            \put(40,10){\line(2,3){10}}\put(40,10){\line(1,0){20}}\put(60,10){\line(-2,3){10}}
            \put(110,10){\line(2,3){10}}\put(110,10){\line(1,0){20}}\put(130,10){\line(-2,3){10}}\put(120,10){\line(0,1){15}}
        }
        {\color{red}
            \put(0,15){\line(1,1){5}}\put(0,25){\line(1,-1){5}}\put(5,20){\line(1,0){15}}
            \put(20,20){\line(1,1){5}}\put(20,20){\line(1,-1){5}}
            \put(75,30){\line(1,-1){5}}\put(80,25){\line(1,0){10}}\put(80,25){\line(0,-1){15}}
            \put(90,25){\line(0,-1){15}}\put(90,25){\line(1,1){5}}
        }
        \put(12,0){(a)}\put(46,0){(b)}\put(83,0){(c)}\put(116,0){(d)}
        \put(50,25){$\ell$}\put(120,25){$\ell$}
        \put(36,6){$-1$}\put(57,6){$1$}\put(106,6){$-1$}\put(127,6){$1$}
        
        \put(8,15){$\embtroppoint_i$}\put(73,14){$\embtroppoint_i$}
        \put(4,21.5){$V_1$}\put(16,21.5){$V_2$}\put(72,26){$V_1$}\put(95,26){$V_2$}
        \put(9,19){$\bullet$}\put(79,14){$\bullet$}
        
    \end{picture}
    \caption{The modification of a marked edge in Construction \ref{con:modified_tropical_limit} and Definition \ref{def:modification_data}. }\label{fcn2}
\end{figure}
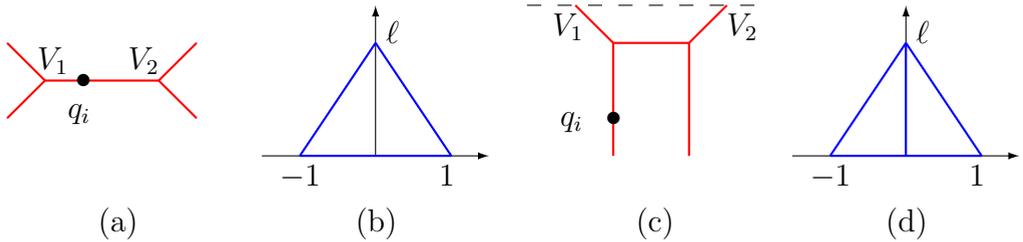

The following definition is the precise description of what limit curves can occur as a modification data of the modified tropical limit.

\begin{definition}\label{def:modification_data}
    Let $(\Gamma, h, \bs{\troppoint}, \{\varphi_V:\PP^1\to C_{V}\subset \Tor_\C(P_V)\}_{V\in \Gamma^0})$ be an enhanced plane tropical curve.
    Let $E\in \Gamma^{(1)}$ be a bounded edge.
    We define \emph{modification data associated to $E$} separately for each case as follows:
    \begin{enumerate}
        \item A modification datum associated to an unmarked edge $E$ of weight $\ell$ is a triple $(\Phi, \zeta, \varphi_{mod})$, where $\Phi$ is a toric change of coordinates that makes $E$ horizontal, $\zeta\in \K$, and $\varphi_{mod}:\PP^1\to C_{mod}\subset \Tor_\C(P_{mod})$ is a rational curve, where $P_{mod}=\conv\{(-1,0),(1,0),(0,\ell)\}$, and the coefficient of $y^{\ell-1}$ in the defining equation of $C_{mod}$ vanishes.
        
        Denote by $\sigma = P_{V_1}\cap P_{V_2}$ the edge of the dual subdivision corresponding to $E$.
        Both $C_{V_1}$ and $C_{V_2}$ intersect $\Tor_\C(\sigma)$ at a unique point $z$.
        We say the modification data is \emph{coherent} if the second coordinate of $z$ (after applying $\Phi$) is $\eta(0)$ and the defining equation of $C_{mod}$ restricted to $\conv\{(-1,0), (0,\ell)\}$ (respectively, $\conv\{(1,0), (0, \ell)\}$) is the local equation of the defining equation of $C_{V_1}$ (respectively, $C_{V_2}$) at the point of intersection with the toric divisor $\Tor_\C(\sigma)$ where $\sigma = P_{V_1}\cap P_{V_2}$ is the edge of the dual subdivision corresponding to $E$.
        Explicitly, in coordinates on $\Tor_\C(P_{V_i})$ where $\Tor_\C(\sigma)$ is defined by $x=0$, the limit curve $C_{V_i}$ ($i=1,2$) intersects $\Tor_\C(\sigma)$ at a single point $z=(0,\eta(0))$, where its defining equation is of the form $(y-\eta(0))^\ell + \theta_i(y) x + O(x^2)$ for some $\theta_i\in \C[y]$. Thus coherence means that the restriction of the defining equation of $C_{mod}$ to $\conv\{(-1, 0), (0, \ell)\}$ (respectively $\conv\{(1,0), (0, \ell)\}$) is $y^\ell + \theta_i(\eta(0)) x^{-1}$ (respectively $y^\ell + \theta_i(\eta(0))x$).

        \item
        A modification datum associated to an edge $E$ of $\Gamma$ of weight $\ell>1$ containing a point $\abstroppoint_i$ is a tuple $(\Phi, \varphi_{1,mod}, \varphi_{2,mod}, \Omega)$, where $\Phi$ is a toric change of coordinates that makes $E$ horizontal, $\varphi_{1,mod}:\PP^1\to C_{1,mod}\subset \Tor_\C(P_{1,mod})$ and $\varphi_{2,mod}:\PP^1\to C_{2,mod}\subset \Tor_\C(P_{2,mod})$ are rational peripherally unibranch curves, where $P_{1,mod}=\conv\{(-1,0),(0,\ell),(0,0)\}$ and $P_{2,mod}=\conv\{(0,0),(1,0),(0,\ell)\}$, and $\Omega$ is modification data associated to the unmarked edge dual to $\conv\{(0,0), (0,\ell)\}$.

        This data is called \emph{coherent} if $\Omega$ is coherent, the intersection of $C_{1,mod}$ with the toric divisor $\Tor_\C(\conv\{(-1,0), (0,0)\})$ is $\xi_i$, the initial term of the first coordinate of the marked point $p_i$ after the toric coordinate change, and additionally the defining equation of $C_{1,mod}$ (respectively $C_{2,mod}$) restricted to $\conv\{(-1,0), (0,\ell)\}$ (respectively $\conv\{(0,\ell), (1, 0)\}$) is the local equation of the defining equation of $C_{V_1}$ (respectively $C_{V_2}$) at the point of intersection with the toric divisor $\Tor_\C(\sigma)$, where $\sigma = P_{V_1}\cap P_{V_2}$ is the edge of the dual subdivision corresponding to $E$.
    \end{enumerate}
\end{definition}

\begin{remark}
    It is obvious that the modification data constructed in Construction \ref{con:modified_tropical_limit} are coherent.
\end{remark}

\begin{definition}
    We say that a pair of modification data associated with a bounded edge $E$ is \emph{equivalent} if it arises by applying Construction \ref{con:modified_tropical_limit} to the same algebraic curve with different choices of toric change of coordinates.
\end{definition}

\begin{lemma}\label{lemma:number_of_modifications}
    Let $(\Gamma, h, \bs{\abstroppoint}, \{C_V\}_{V\in \Gamma^0})$ be an enhanced plane tropical curve satisfying the point constraints.
    Then there exist
    $$\prod_{E\in \Gamma_b^1}\wt(E) \cdot \prod_{\substack{i \\ \abstroppoint_i \in E \in \Gamma_b^1}}\wt(E)$$
    equivalence classes of coherent modification data associated to $\Gamma$.
\end{lemma}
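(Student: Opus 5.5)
The count is local: coherence conditions for different bounded edges are independent. Each coherent datum attached to $E$ involves only the limit curves $C_{V_1}$, $C_{V_2}$ at the endpoints of $E$, and for a marked edge the point $\embalgpoint_i$. So I will count equivalence classes of coherent modification data edge by edge and multiply. It then suffices to show two things. An unmarked bounded edge of weight $\ell$ carries exactly $\ell$ equivalence classes. A marked bounded edge of weight $\ell$ carries exactly $\ell^2$ classes: $\ell$ from the choice of $(C_{1,mod},C_{2,mod})$ and $\ell$ from the inner datum $\Omega$ attached to the unmarked edge dual to $\conv\{(0,0),(0,\ell)\}$. Unbounded edges carry no data, so the product is exactly the stated formula.

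\textbf{Unmarked edge.} Fix a toric change of coordinates $\Phi$ making $E$ horizontal. Different choices of $\Phi$ differ by a transformation preserving the horizontal direction, and I will check that this only relabels the data, which is exactly the equivalence relation. After $\Phi$, coherence fixes the two boundary monomials of the equation of $C_{mod}$ on the sides of $P_{mod}$: they are $y^\ell+\theta_1(\eta(0))x^{-1}$ and $y^\ell+\theta_2(\eta(0))x$. It also forces the coefficient of $y^{\ell-1}$ to vanish. The remaining coefficients, those of $y^j$ for $0\le j\le \ell-2$, must make $C_{mod}$ rational, and more precisely the image of a $\PP^1$ that is peripherally unibranch on the two slanted sides. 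Following \cite[Lemma 3.9]{Shustin_tropical_enum05}, I parametrize
\[
x=\lambda\,\frac{\tau-a}{\tau-b},
\]
so that the equation becomes
\[
y^\ell+\alpha x^{-1}+\beta x+\sum_{j\le \ell-2}c_jy^j ,
\]
with $\alpha$, $\beta$ prescribed and $c_{\ell-1}=0$. The polynomial in $y$ is then forced to be a Chebyshev-type polynomial, and one gets exactly $\ell$ solutions: an $\ell$-th root choice, coming from $\lambda^\ell\sim\beta/\alpha$, up to the symmetry. The shift $\zeta$ is determined by the vanishing of the $y^{\ell-1}$ coefficient, and its initial term $\eta(0)$ is already fixed by coherence. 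So $\zeta$ contributes no further choice up to equivalence.

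\textbf{Marked edge.} After $\Phi$ and the shift by $\eta_i$, coherence prescribes three things for $C_{1,mod}$: its restriction to the side $\conv\{(-1,0),(0,\ell)\}$, its intersection point $\xi_i$ with the divisor dual to $\conv\{(-1,0),(0,0)\}$, and peripheral unibranchness. I will identify this with the situation of \cite[Lemma 3.5]{Shustin_tropical_enum05}. A peripherally unibranch rational curve with Newton triangle $P_{1,mod}$, which has lattice area $\ell$, and with two prescribed boundary points on sides of lattice lengths $1$ and $\ell$, admits $\mu/(1\cdot \ell)=1$ choices. The same argument gives one choice for $C_{2,mod}$ once the side $\conv\{(0,0),(0,\ell)\}$ is matched. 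I expect one subtlety here: the matching along that side is not fixed by coherence but is governed by $\Omega$. So I will argue in the order $C_{1,mod}$, then $\Omega$, then $C_{2,mod}$. $C_{1,mod}$ is unique. By the unmarked-edge count, $\Omega$ has $\ell$ classes. For each of those, matching $C_{2,mod}$ to its prescribed data gives another factor of $\ell$, because $C_{2,mod}$ meets the vertical side with multiplicity $\ell$ at a point whose location is an $\ell$-th root condition. This produces $\ell^2$.

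\textbf{Main obstacle.} The hardest step is making the count of $\ell$ in the unmarked case rigorous without overcounting. One has to check that the Chebyshev-type solutions are distinct and nondegenerate, and that the equivalence relation (the change of $\Phi$) does not identify any of them. I would first try to reduce, via the substitution $y\mapsto y\cdot(\text{unit})$ and the normalization $\alpha=\beta=1$, to the explicit family $x+x^{-1}=T_\ell$-type curves and count their roots directly. Failing that, I would cite \cite[Section 2.2(2)]{GS2021}, where exactly this count of $\ell$ refinements appears.
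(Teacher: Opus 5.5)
\textbf{Verdict.} Your edge-by-edge structure and your unmarked-edge count agree with the paper (both rest on \cite[Lemma 3.9]{Shustin_tropical_enum05}). However, the marked-edge count rests on two false claims that happen to cancel.

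\textbf{First error: $C_{1,mod}$ is not unique.} The side $\conv\{(-1,0),(0,\ell)\}$ of $P_{1,mod}$ has primitive direction $(1,\ell)$, so its lattice length is $1$, not $\ell$. Both sides on which $C_{1,mod}$ is constrained, this one and $\conv\{(-1,0),(0,0)\}$, have lattice length $1$. So \cite[Lemma 3.5]{Shustin_tropical_enum05} gives $\ell/(1\cdot 1)=\ell$ curves, not one. Concretely:
\begin{itemize}
\item unibranchness at the vertical side forces the equation $(y-\tau)^\ell+ax^{-1}=0$;
\item coherence on $\conv\{(-1,0),(0,\ell)\}$ fixes $a$;
\item passing through $\xi_i$ on $\Tor(\conv\{(-1,0),(0,0)\})$ fixes the ratio of $a$ and $(-\tau)^\ell$, hence fixes $\tau^\ell$.
\end{itemize}
This leaves exactly $\ell$ choices of $\tau$.

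\textbf{Second error: there is no factor $\ell$ for $C_{2,mod}$.} Unibranchness and coherence on $\conv\{(1,0),(0,\ell)\}$ force $C_{2,mod}=\{(y-\tau')^\ell+bx=0\}$ with $b$ fixed. The remaining constraint is that $C_{2,mod}$ meets $\Tor(\conv\{(0,0),(0,\ell)\})$ at the same point as $C_{1,mod}$. This is compatibility of the limit curves along the new edge $E'$, and it is also built into the coherence condition for $\Omega$, which presupposes a single common point $z$ on that divisor. Hence $\tau'=\tau$ and $C_{2,mod}$ is unique; in the count of \cite[Lemma 3.5]{Shustin_tropical_enum05} this is $\ell/(\ell\cdot 1)=1$.

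The third side $\conv\{(0,0),(1,0)\}$ is dual to an unmarked end and imposes nothing. So no $\ell$-th root condition on $C_{2,mod}$ exists. If you really left $\tau'$ unmatched, as your remark that the matching is ``not fixed by coherence'' suggests, you would get a one-parameter family, not $\ell$ curves.

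\textbf{Correct bookkeeping.} Your total of $\ell$ for the pair $(C_{1,mod},C_{2,mod})$ is right, but the factor comes from $C_{1,mod}$, not $C_{2,mod}$. The paper's order is $C_{1,mod}$ ($\ell$ choices), then $C_{2,mod}$ (unique), then $\Omega$ ($\ell$ choices by the unmarked case). This gives $\ell^2$, and your final number agrees only because the two mistakes compensate.
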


\begin{proof}
    We enumerate the possible coherent modification data associated with each bounded edge, as in Definition \ref{def:modification_data}.

    \begin{enumerate}

        \item For an unmarked edge $E$ of $\Gamma$ of weight $\ell$, the admissible modified limit curve is $\PP^1\to C'\subset\Tor(P')$, where $$P'=\conv\{(-1,0),(1,0),(0,\ell)\}$$ (see Figure \ref{fcn2}(b)). The constraints are fixed points on the toric divisors $\Tor([(-1,0),(0,\ell)])$ and $\Tor([(1,0),(0,\ell)])$, determined by the chosen admissible limit curves of the embedded part, and the condition that the coefficient of $y^{\ell-1}$ vanishes (cf. \cite[Lemma 3.9]{Shustin_tropical_enum05} and \cite[Lemma 2.49]{ItenbergMikhalkinShustin2007}). These constraints are satisfied by exactly $\ell$ curves \cite[Lemma 3.9]{Shustin_tropical_enum05}.
    
        \item For an edge of $\Gamma$ of weight $\ell>1$ containing a point $\abstroppoint_i$, we should construct two admissible limit curves $C_{1,mod},C_{2,mod}$, as well as modification data for an unmarked edge of weight $\ell$, as described in item 2 of Definition \ref{def:modification_data}.
        A curve $C_{1,mod}\in|{\mathcal L}_{P_{1,mod}}|$ must match a point on the divisor $\Tor([(-1,0),(0,\ell)])$, prescribed by previously chosen admissible embedded limit curves, and the point $\xi_i\in\Tor([(-1,0),(0,0)])$, and it should intersect the divisor $\Tor([(0,0),(0,\ell)])$ at one point. Its equation is $(y-\tau)^\ell+ax^{-1}=0$ with fixed $a$ and fixed $\tau^\ell$, which yields $\ell$ possibilities for $C_{1,mod}$. In turn, the curve $C_{2,mod}\in|{\mathcal L}_{P_{2,mod}}|$ is uniquely determined by the prescribed points on the divisors $\Tor([(0,0),(0,\ell)])$ and $\Tor([(1,0),(0,\ell)])$.
        At last, the modification data of the unmarked edge have $\ell$ options as shown above.
    \end{enumerate}
\end{proof}

Next we describe how one can glue the modification data to get a curve satisfying the point conditions and tropicalizing to a given tropical curve.
This has been described in various sources. We follow the proof of \cite[Lemma 3.13]{CNRBI}, while providing a more detailed explanation.

\begin{Construction}\label{con:surface_family}
    Let $U\subset (\C((t))^\times)^n$ be a plane, i.e., a two-dimensional subvariety defined by linear equations, and suppose the projection $U\to (\C((t))^\times)^2$ is injective.
    Let $h:\Gamma\to \R^n$ be a tropical curve whose image is contained in $\trop(U)$ and whose vertices all have integer coordinates.
    We will define a family of surfaces $\mathcal{S}\to \mbb{A}^1$ as follows:
    \begin{enumerate}
        \item Consider a rational polyhedral subdivision $\Sigma$ of $\trop(U)$ that contains all vertices and edges of $\Gamma$ as cells.
        \item Consider the family of $n$-folds $\mathcal{X}\to \mbb{A}^1$ defined as in \cite{Nishinou2004ToricDO}, i.e. the toric variety corresponding to the cone over $\Sigma$.
        Note that since $\Sigma$ is not a subdivision of $\R^n$, the toric variety $\mathcal{X}$ is not proper over $\mbb{A}^1$.
        \item Consider the map 
        \[ \Spec(\C((t))[x_1^{\pm 1}, \ldots, x_n^{\pm 1}]) \to \Spec(\C[t^{\pm 1}, x_1^{\pm 1}, \ldots, x_n^{\pm 1}]) = (\C^\times)^{n+1} \subset \mathcal{X} \]
        induced by the inclusion $\C[t^{\pm 1}] \subset \C((t))$.        
        We define $\mathcal{S}$ as the closure of the image of $U$ in $\mathcal{X}$ under this map.
    \end{enumerate}

    In addition we define the line bundle $\mathcal{L}$ on $\mathcal{S}$ as the pullback of the line bundle $\mathcal{O}_{\mathcal{X}}(1)$ on $\mathcal{X}$ under the inclusion.
\end{Construction}

The following lemma follows immediately from Tevelev's tropical compactification theorem \cite[Proposition 2.3]{tevelev2007compactifications}, and the fact that $\mathcal{X}$ is a toric degeneration.
\begin{lemma}\label{lemma:surface_family_proper}
    The family $\mathcal{S}\to \mbb{A}^1$ as in Construction \ref{con:surface_family} is a proper family of surfaces satisfying conditions X1, X2 of \cite{shustin2006patchworking}.
\end{lemma}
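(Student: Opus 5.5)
The plan is to reduce the statement to standard facts about tropical compactifications, checking properness and the conditions X1, X2 of \cite{shustin2006patchworking} in turn. By \cite{Nishinou2004ToricDO}, the toric variety $\mathcal{X}$ attached to the cone over $\Sigma$ is flat over $\mbb{A}^1$. Its general fiber is the toric variety of the fan $\mathrm{rec}(\Sigma)$ of recession cones, and its central fiber is the union of toric varieties indexed by the vertices of $\Sigma$, glued along the strata corresponding to higher-dimensional cells. The first step is to observe that $U$ is a linear plane, hence a very affine variety whose tropicalization $\trop(U)$ (as a constant-coefficient-type tropical plane after a translation, or in general a tropical linear space) is the support of $\Sigma$. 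By Tevelev \cite[Proposition 2.3]{tevelev2007compactifications}, applied fiberwise (or to the family over $\C((t))$ in the sense of Speyer/Luxton--Qu), the closure of $U$ in a toric variety whose fan is supported on $\trop(U)$ is proper. So the generic fiber $\mathcal{S}_t$ is a compact surface. For linear spaces, it is moreover a ``tropical compactification'' with transverse boundary (linear spaces are schön).

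The second step is properness of the whole family. I would argue via the valuative criterion: a $\C((s))$-point of $\mathcal{S}$ over a $\C((s))$-point of $\mbb{A}^1$ tropicalizes into $\trop(U)\times\R_{\ge 0}$, which is the support of the cone over $\Sigma$. Hence it extends to a $\C[[s]]$-point of $\mathcal{X}$, and by closedness it lies in $\mathcal{S}$. This is exactly Tevelev's argument with the extra $t$-direction added, i.e.\ the ``fact that $\mathcal{X}$ is a toric degeneration'' mentioned before the statement.

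The third step is to check X1 and X2. These are, as I recall, the requirements that the central fiber $\mathcal{S}_0$ is a reduced union of smooth (toric-like) surfaces meeting transversally along curves, and that $\mathcal{S}$ is smooth, or at least has the expected local structure, along the generic points of the double curves. For X1 and X2, schönness of $U$ gives that each stratum intersection $\mathcal{S}\cap O_\sigma$ is smooth of the expected dimension. For a vertex $v$ of $\Sigma$, the component $\mathcal{S}\cap \overline{O_v}$ is the closure of the initial degeneration $\mathrm{in}_v U$, which is again a plane (a translate of a linear space), hence a smooth rational surface compactified by the star of $v$. Transversality of these components along edges follows from the transversality of $\mathcal{S}$ to the torus orbits of $\mathcal{X}$. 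The injectivity of the projection $U\to(\C((t))^\times)^2$ ensures that the fibers are birational to the toric surface in which we count curves, which is presumably what the X-conditions require.

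The main obstacle is the third step. Here I would need to match the precise formulation of X1, X2 in \cite{shustin2006patchworking}, and to handle possible non-unimodularity of cones in $\Sigma$, which produces toric singularities in $\mathcal{X}$. I would first try to refine $\Sigma$, which is allowed by the construction, so that all cones over cells are unimodular. Smoothness of $\mathcal{X}$ combined with transversality then yields X1, X2 directly.
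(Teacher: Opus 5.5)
Your proposal follows the same route as the paper, which justifies the lemma in one line by Tevelev's Proposition~2.3 together with the fact that $\mathcal{X}$ is a toric degeneration. Your valuative-criterion argument is that statement unwound with the extra $t$-direction, and your check of X1, X2 via sch\"onness of linear spaces and a unimodular refinement of $\Sigma$ (possibly after a further base change $t\mapsto t^m$) spells out what the paper leaves implicit; one slip to fix is that the valuations of the test points fill the cone over $\trop(U)$, i.e.\ the closure of $\{(\lambda w,\lambda) : w\in\trop(U),\ \lambda>0\}$, not $\trop(U)\times\R_{\ge 0}$, and it is the former that equals the support of the fan of $\mathcal{X}$.
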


We are now ready to state and prove the patchworking lemma in our context.
\begin{lemma}\label{lemma:complex_patchworking}
    Let $(\Gamma, h, \bs{\abstroppoint}, \{\varphi_V\}_{V\in \Gamma^0})$ be an enhanced tropical curve satisfying the point conditions.
    For any coherent modification data associated to $\Gamma$ there exists a unique curve $[\bn:(\widehat C,\bs{\absalgpoint})\to\Tor_\K(P)]$ satisfying the point conditions and whose modified tropical limit (in the sense of Construction \ref{con:modified_tropical_limit}) has this modification data.
\end{lemma}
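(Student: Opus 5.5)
We plan to follow the patchworking scheme of \cite{shustin2006patchworking}, as in the proof of \cite[Lemma 3.13]{CNRBI}, carried out on a degeneration of surfaces adapted to the modified tropical limit.

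\textbf{Step 1: the degenerating family.} Starting from $\Gamma$ and the chosen coherent modification data, we form the modified tropical curve in a higher-dimensional ambient space. Each modification of an edge $E$ (a shift $y\mapsto y+\eta$ in the coordinates where $E$ is horizontal) is encoded by one extra coordinate $y-\eta$; the plane $U$ cut out by these linear relations projects injectively to $(\K^\times)^2$. The modified tropical curve then sits in $\trop(U)$. After an untwist $t\mapsto t^r$ all its vertices are integral, so Construction~\ref{con:surface_family} yields a family $\mathcal S\to\mbb A^1$. By Lemma~\ref{lemma:surface_family_proper} this family is proper and satisfies X1, X2 of \cite{shustin2006patchworking}. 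Its central fiber is a union of the toric surfaces $\Tor_\C(P_V)$ and the modification surfaces $\Tor_\C(P_{mod})$, $\Tor_\C(P_{1,mod})$, $\Tor_\C(P_{2,mod})$. The limit curves $\varphi_V$ together with the modification curves glue to a nodal curve $C_0$ in this central fiber. Coherence, i.e.\ matching of local equations along the common divisors, guarantees that $C_0$ is a well-defined element of $|\mathcal L|_{\mathcal S_0}$ with transversal intersections along the double locus, as required there.

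\textbf{Step 2: transversality.} We then verify the hypothesis of the patchworking theorem \cite[Theorem 2.8]{shustin2006patchworking} in its version with point constraints. For each component, consider the germ of the equisingular (peripherally unibranch, rational) family of curves through the prescribed boundary points, with the tangency order $m_i$ imposed at fixed ends. We must show this family is smooth of the expected dimension and intersects the constraints transversally. For trivalent vertices this is \cite[Lemma 3.5]{Shustin_tropical_enum05}, and for the modification triangles it is \cite[Lemma 3.9]{Shustin_tropical_enum05}. The global condition, namely vanishing of the obstruction $H^1$ of the twisted ideal sheaf, reduces to a count along the regular orientation of Definition~\ref{def:regular_trop_curve}. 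Since $\Gamma$ is regular and trivalent (Proposition~\ref{prop:complex_tangency_tropicalization}), each vertex receives exactly two incoming constraints, which are fixed either by earlier vertices or by the points $\embalgpoint_i$. Processing vertices in the order given by the orientation, the linearized system becomes triangular with invertible diagonal blocks, and this gives transversality. The boundary points with $m_i>1$ need attention: the constraint there is a point together with tangency order $m_i$, and it is satisfied by a fixed end of weight $m_i$ whose limit curve is peripherally unibranch at that divisor. Its deformation space is the space of curves with a single $m_i$-fold contact at the given point, which is smooth of codimension $m_i$.

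\textbf{Step 3: existence, uniqueness, main obstacle.} The patchworking theorem then gives a curve $C_t$ deforming $C_0$ and satisfying the conditions. Its modified tropical limit recovers the given data by construction, since the shifts $\eta$ are determined by $C_t$. For uniqueness, any curve with this modified tropical limit lies in the same family $\mathcal S$, because the shifts are intrinsic by Construction~\ref{con:modified_tropical_limit}. By the implicit-function nature of the patchworking theorem it is therefore the unique deformation of $C_0$. We expect the main obstacle to be Step 2 at the marked edges of weight $\ell>1$ and at the fixed boundary ends. These require checking that the refinement via the two triangles $P_{1,mod},P_{2,mod}$ and the extra unmarked modification keeps the equisingular stratum transversal to the constraint from $\embalgpoint_i$ (using $M\gg0$ in \eqref{e5}). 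The first thing to try is an explicit computation on $(y-\tau)^\ell+ax^{-1}$, showing that the tangent space of its deformations with fixed divisor points has the expected dimension.
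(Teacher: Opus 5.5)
Your proposal is correct and follows essentially the same route as the paper: encode the modification data in the surface $U$, build the degeneration via the surface-family construction (proper, satisfying X1 and X2), glue the limit and modification curves into the central section, apply Shustin's patchworking theorem, and transport the resulting curve back through the injective (birational) projection $U\to(\C((t))^\times)^2$. The one difference is in the transversality step. You plan to check it by hand along the regular orientation, including the marked edges of weight $\ell>1$ that you flag as the main obstacle. The paper does not reprove this; it simply imports conditions T1--T3 of \cite[Theorem 2.15]{shustin2006patchworking} from the proof of \cite[Lemma 3.13]{CNRBI}.
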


\begin{proof}
    The proof follows the same lines as the proof of \cite[Lemma 3.13]{CNRBI}.
    The modification data defines a linear map $(\C((t))^\times)^2 \to \C((t))^n$, obtained by composing the toric change of coordinates with the coordinate changes $y\mapsto y+\eta$ and $y\mapsto y+\eta_i$ described in Construction \ref{con:modified_tropical_limit}.
    This defines a plane $U\subset (\C((t))^\times)^n$ as in Construction \ref{con:surface_family}.
    In fact, the modification data uniquely determines the tropicalization $\Gamma\subset \trop(U)$ of the image of possible curves $\widehat C$ in $U$; see Figure \ref{fcn2} and \cite[Section 2.5.8]{ItenbergMikhalkinShustin2007}.
    By performing a base change over $\Spec(\C((t^{1/m}))) \to \Spec(\C((t)))$ we can assume that all the vertices of $\Gamma$ are integral.
    Applying Construction \ref{con:surface_family} to $\Gamma$ and $U$, we obtain, by Lemma \ref{lemma:surface_family_proper}, a proper family of surfaces $\mathcal{S}\to \mbb{A}^1$ satisfying conditions X1 and X2 of \cite{shustin2006patchworking}.

    The limit curves $\varphi_{\text{mod}}, \varphi_{1, \text{mod}} $, and $\varphi_{2, \text{mod}}$ of the modification data define a section $\xi_0(\mathcal{S}_0, \mathcal{L}_0)$. 
    By construction, this section satisfies conditions S1, S2, S3, S4, S5 of \cite{shustin2006patchworking}.
    Finally, it was shown in the proof of \cite[Lemma 3.13]{CNRBI} that the section $\xi_0(\mathcal{S}_0, \mathcal{L}_0)$ satisfies condition T1, T2, T3 of \cite[Theorem 2.15]{shustin2006patchworking}, so by the result of said theorem we get the existence of the curve $[\bn:(\widehat C,\bs{\absalgpoint})\to \mathcal{S}]$.
    The projection $U\to (\C((t))^\times)^2$ is birational, so we get the desired curve $[\bn:(\widehat C,\bs{\absalgpoint})\to\Tor_\K(P)]$.
\end{proof}

\begin{example}
    One can follow the computations of this section to get a signed count (with Welschinger sign) of real curves satisfying the point conditions and tropicalizing to a given tropical curve.
    Indeed, using the results of \cite{Shustin_tropical_enum05} (see also Lemmas \ref{lemma:real_enhancements_count} and \ref{lemma:odd_in_real_modification_count} below), one can show that if the only even edges of $\Gamma$ are the fixed ends, then the corresponding signed count is
    \[ \prod_{V\in \Gamma^0} (-1)^{\Int(V)} \]
    where $\Int(V)$ is the number of integer points in the interior of the dual cell to $V$.

    Unlike the count of complex curves, this signed count is not invariant in general.
    Indeed, consider the polytope $P = \conv\{(0,2), (2,0), (5,1), (2,3)\}$ and the tropical point conditions
    $$\left\langle ((-1,-1)\R, (-2,-2)),((1,-3), (0,0)), ((1,y), (0,0)) \right\rangle.$$
    For $y>1.5$ we will get a unique tropical curve, depicted in Figure \ref{fig:non_invariance_real_count}(a) with a positive Welschinger sign.

    For $y<1.5$ we will get the tropical curves depicted in Figure \ref{fig:non_invariance_real_count}(b) and \ref{fig:non_invariance_real_count}(c) with opposite Welschinger signs.

\end{example}

\begin{figure}
    
	\begin{center}
        \begin{tikzpicture}
            \pgfmathsetmacro{\marksize}{0.05}
            \tikzset{
                every path/.style = {line width=0.7pt, red}, 
                every node/.style = {black},
                pics/x mark/.style = {
                    code={
                        \draw [black] (-\marksize, -\marksize) -- (\marksize, \marksize);
                        \draw [black] (-\marksize, \marksize) -- (\marksize, -\marksize);
                    }}
            }
            \begin{scope}[scale=0.4, decoration={markings,mark=at position 0.5 with {\arrow{>}}}, xshift=-10, yshift=-20]
                \draw [postaction={decorate}] (-4, -4) --  (0,0);
                \draw [black] (-4+2*\marksize, -4-2*\marksize) -- (-4-2*\marksize, -4+2*\marksize);
                \path (1, -3) pic[rotate=20] {x mark};
                \draw [postaction={decorate}] (1, -3) -- (0,0);
                \draw [postaction={decorate}] (1,-3) -- (1.33, -4);
                \draw [postaction={decorate}] (0,0) -- (0.5, 2.5);
                \draw [postaction={decorate}] (0.5, 2.5) -- (-0.625, 4.75);
                \draw [postaction={decorate}] (1.5, 4) -- (0.5, 2.5);
                \path (1.5, 4) pic[rotate=45] {x mark};
                \draw [postaction={decorate}] (1.5, 4) -- (2, 4.75);

                \node at (-1, -4) {$(a)$};
            \end{scope}

            \begin{scope}[scale=0.6, every path/.style={blue, thick}, yshift=-200, xshift=-100];
                \draw (0,2) -- (2,0) -- (5,1) -- (2,3) -- cycle;
                \draw (0,2) -- (5,1);
                \draw [black, fill=black] (2,0) circle (0.03);
                \draw [black, fill=black] (1,1) circle (0.03);
                \draw [black, fill=black] (2,1) circle (0.03);
                \draw [black, fill=black] (3,1) circle (0.03);
                \draw [black, fill=black] (4,1) circle (0.03);
                \draw [black, fill=black] (5,1) circle (0.03);
                \draw [black, fill=black] (0,2) circle (0.03);
                \draw [black, fill=black] (1,2) circle (0.03);
                \draw [black, fill=black] (2,2) circle (0.03);
                \draw [black, fill=black] (3,2) circle (0.03);
                \draw [black, fill=black] (2,3) circle (0.03);
            \end{scope}

            \begin{scope}[scale=0.5, decoration={markings,mark=at position 0.5 with {\arrow{>}}}, xshift=250]
                \draw [postaction={decorate}] (-4, -4) --  (-1/6, -1/6);
                \draw [black] (-4+2*\marksize, -4-2*\marksize) -- (-4-2*\marksize, -4+2*\marksize);
                
                \path (1, 1.25) pic[rotate=45] {x mark};
                \draw [postaction={decorate}] (1, 1.25) -- (2, 2.75);
                \draw [postaction={decorate}] (1,1.25) -- (1/18, -1/6);

                \path (1, -3) pic[rotate=20] {x mark};
                \draw [postaction={decorate}] (1,-3) -- (1.33, -4);
                \draw [postaction={decorate}] (1,-3) -- (1/18, -1/6);

                \draw (1/18, -1/6) -- (-1/6, -1/6);
                \draw [postaction={decorate}] (-1/6, -1/6) -- (-39/24, 2.75);

                \node at (-1, -4) {$(b)$};
            \end{scope}

            \begin{scope}[scale=0.6, every path/.style={blue, thick}, yshift=-200, xshift=100]
                \draw (0,2) -- (2,0) -- (5,1) -- (2,3) -- cycle;
                \draw (2,0) -- (2,3);
                \draw [black, fill=black] (2,0) circle (0.03);
                \draw [black, fill=black] (1,1) circle (0.03);
                \draw [black, fill=black] (2,1) circle (0.03);
                \draw [black, fill=black] (3,1) circle (0.03);
                \draw [black, fill=black] (4,1) circle (0.03);
                \draw [black, fill=black] (5,1) circle (0.03);
                \draw [black, fill=black] (0,2) circle (0.03);
                \draw [black, fill=black] (1,2) circle (0.03);
                \draw [black, fill=black] (2,2) circle (0.03);
                \draw [black, fill=black] (3,2) circle (0.03);
                \draw [black, fill=black] (2,3) circle (0.03);
            \end{scope}

            \begin{scope}[scale=0.5, decoration={markings,mark=at position 0.5 with {\arrow{>}}}, xshift=490]
                \draw [postaction={decorate}] (-4, -4) --  (0.5,0.5);
                \draw [black] (-4+2*\marksize, -4-2*\marksize) -- (-4-2*\marksize, -4+2*\marksize);
                \draw [postaction={decorate}] (1, 1.25) -- (0.5,0.5);
                \path (1, 1.25) pic[rotate=45] {x mark};
                \draw [postaction={decorate}] (1, 1.25) -- (2, 2.75);
                \draw [postaction={decorate}] (0.5,0.5) -- (0.5, -1.5);
                \path (1, -3) pic[rotate=20] {x mark};
                \draw [postaction={decorate}] (1, -3) -- (0.5,-1.5);
                \draw [postaction={decorate}] (1,-3) -- (1.33, -4);
                \draw [postaction={decorate}] (0.5, -1.5) -- (-1.625, 2.75);

                \node at (-1, -4) {$(c)$};
            \end{scope}

            \begin{scope}[scale=0.6, every path/.style={blue, thick}, yshift=-200, xshift=300]
                \draw (0,2) -- (2,0) -- (5,1) -- (2,3) -- cycle;
                \draw (2,0) -- (4,1) -- (2,3);
                \draw (4,1) -- (5,1);
                \draw [black, fill=black] (2,0) circle (0.03);
                \draw [black, fill=black] (1,1) circle (0.03);
                \draw [black, fill=black] (2,1) circle (0.03);
                \draw [black, fill=black] (3,1) circle (0.03);
                \draw [black, fill=black] (4,1) circle (0.03);
                \draw [black, fill=black] (5,1) circle (0.03);
                \draw [black, fill=black] (0,2) circle (0.03);
                \draw [black, fill=black] (1,2) circle (0.03);
                \draw [black, fill=black] (2,2) circle (0.03);
                \draw [black, fill=black] (3,2) circle (0.03);
                \draw [black, fill=black] (2,3) circle (0.03);
            \end{scope}
        \end{tikzpicture}
    \end{center}
    \caption{An example of non-invariance of the signed count of real curves with tangency conditions. In the top row we depict the tropical curve and in the bottom row we depict the dual subdivisions. The black dots in the dual subdivision picture denote the integer points.}\label{fig:non_invariance_real_count}
\end{figure}
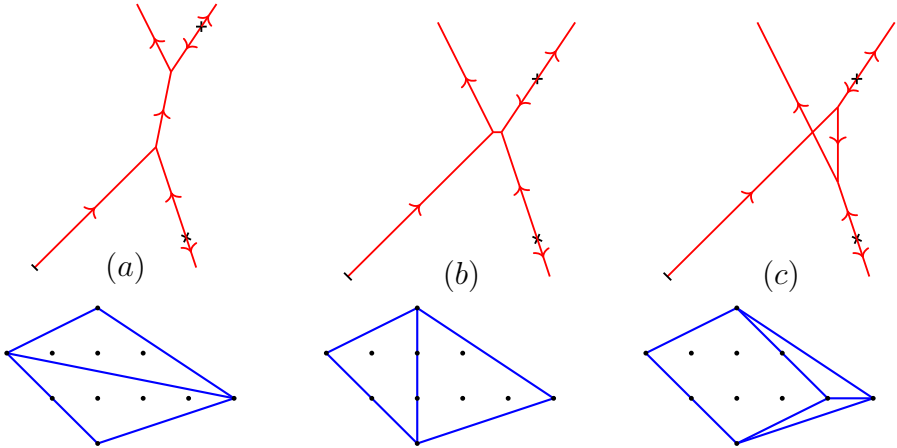

\section{Real correspondence theorem}\label{sec:real_conjugate}

We now focus on the real setting and provide a tropical signed count of real curves satisfying a given set of essentially peripheral algebraic point conditions.
\begin{definition}
    Essentially peripheral real point conditions are a sequence of points $\embalgpoint_i\in \left(\K_\R^\times\right)^2 \cup \partial \Tor_\K(P)$ such that the set $\{\embalgpoint_i\}$ is invariant under conjugation.

    The reduced tropicalization of essentially peripheral real point conditions is obtained by taking the tropicalization of these point conditions as in Definition \ref{def:trop_point_conditions_tropicalization}, and then, for every pair of complex conjugate points $\embalgpoint_i$ and $\embalgpoint_j$, replacing the equal pairs $(\trop(\embalgpoint_i)+u_i\R, u_i)$ and $(\trop(\embalgpoint_j)+u_j\R, u_j)$ in the resulting tropical point conditions by a single pair $(\trop(\embalgpoint_i)+u_i\R, 2u_i)$.
\end{definition}

\begin{lemma}\label{lemma:real_tropicalization}
    Let $\bs{\embalgpoint}$ be essentially peripheral real point conditions consisting of $n=|\partial P| + g - 1$ points.
    Suppose that $\bs{\embalgpoint}$ is in general position among the configurations of essentially peripheral real point conditions with the given type, and moreover its reduced tropicalization is in general position among the configurations of tropical point conditions with the given type.

    Let $C$ be a real genus $g$ curve of degree $d$ passing through $\bs{\embalgpoint}$, let $\Gamma$ be the tropicalization of $C$, and let $\Gamma':=\Gamma/\bc$ be its natural quotient.
    Then the following holds for $\Gamma$ and $\Gamma'$:
    \begin{enumerate}
        \item $\Gamma'$ satisfies the reduced tropicalization of $\bs{\embalgpoint}$.
        \item $\Gamma'$ is trivalent, regular, and has genus $g$.
        \item $\Gamma$ is regular of genus $g$ and all its ends have weight $1$.
    \end{enumerate}
\end{lemma}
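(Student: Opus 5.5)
We plan to follow the dimension-counting argument of Proposition \ref{prop:complex_tangency_tropicalization}, applied to the quotient $\Gamma'$ rather than to $\Gamma$. The point is that after taking the quotient, each conjugate pair $\embalgpoint_i,\embalgpoint_j=\bc(\embalgpoint_i)$ on a boundary divisor becomes a single tropical condition of type $2u_i$. We may therefore compare $\Gamma'$ with the moduli space of tropical curves satisfying the reduced tropicalization.

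\textbf{Step 1: part (1).} Since $C$ is real, $\Gamma$ carries the involution $\bc$ with $h\circ\bc=h$. A conjugate pair of boundary points lies on the same toric divisor, and the ends of $\Gamma$ carrying them satisfy $\trop(\embalgpoint_i)=\trop(\embalgpoint_j)$. These two ends are therefore swapped by $\bc$ and have the same image line. In $\Gamma'$ they become a single end of weight at least $2$ in direction $u_i$ lying on $\trop(\embalgpoint_i)+u_i\R$. The real points stay on $\Gamma^{\bc}$ and map correctly. This gives (1), modulo the standard fact, as in Proposition \ref{prop:complex_tangency_tropicalization}, that the tropicalization satisfies the unreduced conditions.

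\textbf{Step 2: part (2).} Let $k$ be the number of conjugate pairs, so the reduced conditions consist of $n'=n-k$ pairs $(\embtroppoint,u)$, and let $\Delta'$ be the degree of $\Gamma'$. Each fixed end of type $2u_i$ absorbs two boundary lattice steps, so $|\Delta'|\le|\partial P|-k$. We have $\gen(\Gamma')\le\gen(\Gamma)\le g$, since a quotient does not increase the first Betti number. We also have $n'=|\partial P|-k+g-1$.

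By the general position of the reduced tropicalization, $\dim Ev(\openmoduli{[\Gamma']})=n'+|\setc{i}{u_i=0}|$. Lemma \ref{lemma:dim_moduli_space} then gives
\[ n'+|\setc{i}{u_i=0}|\le\dim\openmoduli{[\Gamma']}\le|\Delta'|+\gen(\Gamma')-1+|\setc{i}{u_i=0}|\le n'+|\setc{i}{u_i=0}|. \]
All inequalities are therefore equalities. Hence $\Gamma'$ is trivalent of genus exactly $g$, and all unfixed ends have weight $1$. Lemma \ref{lemma:Ev_injective} together with the Euler characteristic count gives regularity, exactly as in Proposition \ref{prop:complex_tangency_tropicalization}.

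\textbf{Step 3: part (3).} From Step 2, $\gen(\Gamma)\ge\gen(\Gamma')=g$, and the bound $\gen(\Gamma)\le g$ forces $\gen(\Gamma)=g$. Equality of Betti numbers means the map $\Gamma\to\Gamma'$ creates no new cycles. We will use this to show that $\bc$ acts on $\Gamma$ as follows: it fixes $\Gamma$ pointwise except on the pairs of fixed ends, which it swaps. A non-fixed bounded part $\Gamma_{im}$, meeting $\Gamma^{\bc}$ at two or more points, would produce an extra cycle in $\Gamma$ and push $\gen(\Gamma)$ above $g$.

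It follows that $\Gamma$ is obtained from $\Gamma'$ by splitting each weight-$2$ fixed end into two weight-$1$ ends attached at a new vertex. That new vertex is flat, or it is a collinear configuration with a contracted piece. Regularity of $\Gamma$ then transfers from $\Gamma'$: the marked points now sit at the univalent ends, which are themselves marked. All ends have weight $1$, because the unfixed ends of $\Gamma'$ have weight $1$ and each fixed end of weight $2$ splits into two ends of weight $1$.

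\textbf{Main obstacle.} The hardest part is the structural claim in Step 3: excluding non-real components of $\Gamma$ other than the peripheral ends, and showing that each doubled end of $\Gamma'$ lifts to exactly two weight-$1$ conjugate ends rather than one real weight-$2$ end. For the second point, the first thing to try is the genericity of $\bs{\embalgpoint}$: the boundary point $\embalgpoint_i$ is non-real, so it cannot lie on a $\bc$-invariant end. For the first point, we plan to combine the equality $\gen(\Gamma)=g=\gen(\Gamma')$ with an Euler characteristic count on $\Gamma\setminus\bs{\abstroppoint}$ to show that any additional conjugate-swapped piece creates a cycle or a component with no unfixed end.
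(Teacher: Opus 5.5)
Your Steps 1 and 2 follow the paper's own argument. You use the same squeeze between $\dim Ev$ on the cell of $\Gamma'$ and the bound of the dimension lemma, with $|\Delta'|\le|\partial P|-k$. Your target dimension $n'+|\{i: u_i=0\}|$ is slightly more careful than the paper's $2(n-2s)+s$, since it allows real points on the boundary. In Step 3, the deductions $\gen(\Gamma)=g$ (from $\gen(\Gamma')\le\gen(\Gamma)\le g$) and ``all ends of $\Gamma$ have weight $1$'' are also fine.

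\textbf{The gap.} The structural claim in Step 3, that $\bc$ fixes $\Gamma$ pointwise except on the conjugate pairs of fixed ends, is false. Your cycle argument only excludes non-real pieces that meet $\Gamma^{\bc}$ in two or more points or carry a cycle. It says nothing about non-real \emph{trees} attached to $\Gamma^{\bc}$ at a single point, and these do occur.

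For example, take $P=\conv\{(0,0),(d,0),(0,d)\}$ with one conjugate pair on the bottom divisor and one on the left divisor. Then $\Gamma'$ can have a trivalent vertex where the two weight-$2$ fixed ends meet, with an outgoing weight-$2$ edge in direction $(1,1)$. In $\Gamma$ this lifts to two conjugate trivalent vertices, joined to the real part by two conjugate bounded edges. This is exactly the case $V\in\Gamma'^{0}_{\mathrm{im}}\setminus\Gamma'^{0}_{\mathrm{re}}$ treated later in the count of conjugation invariant enhancements. It is also what the conditions $\chi(K)=1$, $|K\cap\Gamma'_{\mathrm{re}}|=1$ of the real correspondence theorem describe.

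Even in the simplest situation, the two conjugate ends typically attach to a real vertex that is $4$-valent in $\Gamma$, not to a new flat vertex. An even edge of $\Gamma'$ can also lift to a single real even edge. So your proof that $\Gamma$ is regular (``the marked points now sit at the univalent ends'') rests on a wrong picture. The ``main obstacle'' you plan to overcome is a false statement.

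\textbf{How to fix it.} You can get regularity of $\Gamma$ without describing $\bc$ at all.

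Let $\pi\colon\ogamma\to\ogamma'$ be the quotient map, and let $L'$ be a component of $\ogamma'\setminus\bs{\abstroppoint}'$. The unique unfixed end of $L'$ has weight $1$. Hence exactly one end of $\Gamma$ lies over it, and that end is $\bc$-invariant, since a swapped pair would give weight at least $2$.

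The quotient by $\bc$ is open and closed, and the subsequent contraction has connected fibres. Therefore every connected component of $\pi^{-1}(L')$ surjects onto $L'$, and so contains that end. It follows that $\pi^{-1}(L')$ is connected and contains exactly one unfixed end.

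So the number of components of $\ogamma\setminus\bs{\abstroppoint}$ equals the number of unfixed ends of $\Gamma$. Let $r$ be the number of real boundary points and $m$ the number of interior marked points. Using $|\Delta|=|\partial P|$, the number of unfixed ends is $|\partial P|-2k-r=1-g+m$.

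On the other hand, $\chi(\ogamma\setminus\bs{\abstroppoint})\ge 1-\gen(\Gamma)+m=1-g+m$. Comparing the two forces every component to be a tree. This is, in effect, what the paper compresses into ``follows immediately from the properties of $\Gamma'$''.
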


\begin{proof}
    The first item, and the fact that the genus of $\Gamma$ is at most $g$, follow immediately from standard results on tropicalization of algebraic curves.
    Denote the number of conjugate pairs of points in $\bs{\embalgpoint}$ by $s$.
    By the assumption on the general position of the reduced tropicalization of point conditions, we have 
    \[ \dim Ev(\openmoduli{[\Gamma']}) = 2(n-2s)+s=2n-3s=2|\partial P| +2g-2-3s.\]

    Denote by $\Delta'$ the degree of $\Gamma'$.
    Then $|\Delta'| \le |\partial P| - s$, so by Lemma \ref{lemma:dim_moduli_space}, we have 
    \begin{align*}
    \dim \openmoduli{[\Gamma']} & \le |\Delta'|+g-1+(n-2s) \le \\
    & \le |\partial P| - s + g - 1 + n - 2s = 2|\partial P| + 2g - 2 - 3s
    \end{align*}
    meaning that there is an equality.
    Thus, by Lemma \ref{lemma:dim_moduli_space}, $\Gamma'$ is trivalent of genus $g$ and $|\Delta'| = |\partial P| - s$ so all the ends of $\Gamma$ have weight $1$.
    Additionally, Lemma \ref{lemma:Ev_injective} implies that $\Gamma'$ is regular, similarly to the proof of Proposition \ref{prop:complex_tangency_tropicalization}.
    This concludes the proof of item $(2)$.

    The fact that $\Gamma$ is regular of genus $g$ follows immediately from the properties of $\Gamma'$.
\end{proof}

\begin{lemma}\label{lemma:4_valent_real_enhancement}
    Let $\delta\subset \R^2$ be a nondegenerate lattice triangle with an edge $\sigma_0$ of even length and two edges $\sigma_1, \sigma_2$ of odd length.
    Given a real point in $\Tor_\R(\sigma_1)$, and two complex conjugate points on $\Tor_\C(\sigma_0)$, there exist exactly $\frac{\mu(V)}{\|\sigma_0\|}$ real rational curves $C_k\in|{\mathcal L}_{\delta}|$ that pass through the given points and one more point on $\Tor_\R(\sigma_2)$ and are unibranch at all these four points.
    The number of elliptic nodes of each of these curves is $|\Int(\delta)\cap\Z^2|\mod 2$.
\end{lemma}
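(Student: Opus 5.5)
The plan is to parametrize the curves explicitly and reduce both the count and the node parity to binomial equations, in the spirit of \cite[Lemma 3.5]{Shustin_tropical_enum05}. Let $a_0,a_1,a_2\in\Z^2$ be the primitive outer normals of $\sigma_0,\sigma_1,\sigma_2$, and set $\ell_k=\|\sigma_k\|_\Z$. Then $\ell_0a_0+\ell_1a_1+\ell_2a_2=0$, and $\ell_0=2m$. A rational curve $C\in|{\mathcal L}_\delta|$ that is unibranch at one point of $\Tor(\sigma_1)$, one point of $\Tor(\sigma_2)$ and two points of $\Tor(\sigma_0)$ is the image of $\PP^1$ under a map of the form
\[ \varphi(\tau)=\lambda\,\tau^{\ell_1a_1}(\tau-q)^{\ell'a_0}(\tau-q')^{\ell''a_0},\qquad \lambda\in(\C^*)^2,\ \ell'+\ell''=\ell_0 , \]
after normalizing the preimages of the $\sigma_1$ and $\sigma_2$ points to $0$ and $\infty$. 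Such a curve is real with the two $\sigma_0$-points conjugate exactly when, for a suitable real structure on $\PP^1$, $q'=\overline q$ and $\ell'=\ell''=m$. The branches at the conjugate points must have equal multiplicity because conjugation exchanges them. The real automorphisms of $\PP^1$ fixing $0$ and $\infty$ are $\tau\mapsto c\tau$ with $c\in\R^*$, so we may normalize $q=e^{i\theta}$ with $\theta\in(0,\pi)$, the choice $\theta\in(\pi,2\pi)$ being exchanged by $c=-1$. We also keep track of the alternative real structure $\tau\mapsto 1/\overline\tau$: it fixes $0$ and $\infty$ only if swapped, so it is excluded.

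\textbf{Step 1 (the count).} Write the three boundary conditions in terms of $(\lambda,q)$. The point on $\Tor(\sigma_1)$ is obtained by restricting $\varphi$ at $\tau=0$. Concretely, its coordinate is $\lambda^{b_1}$ times a monomial in $q,\overline q$, where $b_1$ is a primitive vector along $\sigma_1$. The two points on $\Tor(\sigma_0)$ are $\lambda^{b_0}\,q^{\ell_1\langle a_1,b_0\rangle}(q-\overline q)^{m\langle a_0,b_0\rangle}$ and its conjugate; in fact the $a_0$-pairings vanish, so only the power of $q$ and the factor from the other root survive. This gives a system of binomial equations in $\lambda\in(\R^*)^2$ and $q$ on the unit circle, whose exponent matrix has determinant expressible through $\mu(V)=|\det(\ell_0b_0,\ell_1b_1)|$. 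Solving it, the number of complex solutions is $\mu(V)/(\ell_1 m)$-type, and one then counts the real ones: those with $\lambda$ real and $|q|=1$. Here I would split according to the parity of $\ell_1$ (it is odd), which makes the real root of $\lambda$ unique. The real solutions then correspond to the solutions of $e^{iN\theta}=\text{const}$ with $\theta\in(0,\pi)$, and their number should come out to $\mu(V)/\ell_0$. Finally I would check that each solution meets $\Tor_\R(\sigma_2)$ at a real point; this is automatic since $\tau=\infty$ is real.

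\textbf{Step 2 (elliptic nodes).} By the genus formula, the rational curve $C$ has total $\delta$-invariant equal to $|\Int(\delta)\cap\Z^2|$. Indeed, it is peripherally unibranch with smooth (tangency) branches, and for generic data all its singularities are nodes in the torus; I would verify genericity via the explicit parametrization. The non-real nodes come in conjugate pairs, so
\[ \#\{\text{elliptic}\}\equiv|\Int(\delta)\cap\Z^2|-\#\{\text{hyperbolic}\}\pmod 2 , \]
and it remains to show that the number of hyperbolic nodes is even. The hyperbolic nodes are the self-intersections of the real arc $\varphi(\R P^1)$ inside $(\R^*)^2$, which I would count by passing to the arc's lift $\R\to\R^2$, $s\mapsto\log|\varphi(s)|$, together with its sign quadrant. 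The arc starts at the $\sigma_1$-point and ends at the $\sigma_2$-point, and both ends have odd lattice length. The parity of its self-crossings can then be computed as a rotation-number/winding argument: the ends go to infinity in the fixed directions $a_1,a_2$, and the argument of the tangent changes in a controlled way across the point $\tau=\pm1$ nearest to $q$. Alternatively, as a fallback, I would deform $q\to\pm1$, i.e. $\theta\to0$. In that limit the conjugate pair collides into a real point of multiplicity $\ell_0$, and the real curve degenerates to the curve with one real tangency point, whose hyperbolic node count is known \cite{Shustin_tropical_enum05}; one then tracks that nodes change only in pairs along the deformation.

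The main obstacle will be Step 2, the parity of hyperbolic nodes. Step 1 is a routine binomial computation. For Step 2, I would first try the direct topological computation on the explicit real arc, and fall back on the degeneration argument if the winding computation gets messy.
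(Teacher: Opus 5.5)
\textbf{Step 1.} This follows the paper's route: a real parametrization with the $\sigma_1$- and $\sigma_2$-points at $0,\infty$, the conjugate pair at $q,\overline q$ with equal multiplicity $m$, and the normalization $|q|=1$, which leaves only $\tau\mapsto-\tau$. It can be completed, but your parity bookkeeping is in the wrong place.

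Since $\langle a_0,b_0\rangle=0$, the $\sigma_0$-condition is just $\lambda^{b_0}q^{\pm m_0}=\xi$, with $m_0:=\ell_1|\langle a_1,b_0\rangle|=\mu(V)/\ell_0$. No factor from $\overline q$ survives. The case split must therefore be on the parity of $m_0$, not of $\ell_1$, which is odd by hypothesis anyway.
\begin{itemize}
\item For $m_0$ odd, $\lambda$ is unique for each sign of $\lambda^{b_0}$, and the two signs together give $m_0$ values $\theta\in(0,\pi)$.
\item For $m_0$ even, $\det(b_0,b_1)$ is even, so $b_0\equiv b_1\bmod 2$ and the sign of $\lambda^{b_0}$ is forced. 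There are then \emph{two} real $\lambda$, differing by $(-1)^{\ell_1a_1}$, each with $m_0/2$ values $\theta\in(0,\pi)$.
\end{itemize}
So your claim that the real $\lambda$ is unique is false when $m_0$ is even, although the total is still $m_0=\mu(V)/\|\sigma_0\|$.

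\textbf{Step 2 (the genuine gap).} Your reduction to controlling hyperbolic nodes is correct, but you then prove nothing about them: you list two strategies and carry out neither.

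The degeneration fallback also rests on ``nodes change only in pairs'', which is false for general real families. A hyperbolic node can shrink through a real cusp and become elliptic, flipping the parity. In this particular family you could exclude that, because on $\R^*$ the logarithmic derivative $\ell_1a_1/\tau+2ma_0(\tau-\cos\theta)/|\tau-q|^2$ never vanishes. You would still need that no node escapes to the boundary, and a justified hyperbolic count for the limit curve; you give neither.

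The missing idea, which the paper uses, is that there are \emph{no} hyperbolic nodes at all, and this is immediate in your own coordinates. The function $x:=\chi^{b_0}\circ\varphi=\lambda^{b_0}\tau^{\pm m_0}$ is a pure monomial.
\begin{itemize}
\item If $m_0$ is odd, $x$ is injective on $\R^*$, so the real arc has no self-crossings.
\item If $m_0$ is even, $x(\tau)=x(\tau')$ with $\tau\neq\tau'$ real forces $\tau'=-\tau$. Complete $b_0$ to a basis $(b_0,c)$. On $\R^*$ you have $y:=\chi^{c}\circ\varphi=\lambda^{c}\tau^{\ell_1\langle a_1,c\rangle}|\tau-q|^{\pm 2m}$. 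The exponent $\ell_1\langle a_1,c\rangle$ is odd, because $a_1$ is primitive and $\langle a_1,b_0\rangle$ is even. Hence $y(\tau)$ and $y(-\tau)$ have opposite signs, so the two points differ.
\end{itemize}
Thus the real part is an embedded circle, there are zero hyperbolic nodes, and the parity statement follows from your genus-formula count.
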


\begin{proof}
    
By an automorphism of $\Z^2$ we can bring $\delta$ to the position
$$\conv\{(0,m_0),\ (m_1,0),\ (m_1+2m_2,0)\},\quad\sigma_1=[(0,m_0),(m_1,0)],$$ $$m_0,m_1,m_2>0,\quad m_0<m_1+2m_2,\quad\gcd(m_0,m_1)\equiv1\mod2.$$
Thus, we can choose a real parametrization of $C\in|{\mathcal L}_{\delta}|$ in the form
$$x=a\rho^{m_0},\quad y=b\rho^{m_1}(\rho-\tau)^{m_2}(\rho-\overline\tau)^{m_2},\quad a,b\in\R^*,\ \tau\in\C\setminus\R,$$
with the parameter $\rho\in\C$. The only possible real reparametrizations are $\rho\mapsto\lambda\rho$, $\lambda\in\R^*$. Assuming that $|\tau|=1$, we leave the only reparametrization $\rho\mapsto-\rho$. Denote $k=\gcd(m_0,m_1)$, $m'_0=m_0/k$, $m'_1=m_1/k$.
The conditions for passing through given points on $\Tor_\C([(0,m_0),(m_1,0)])$ and $\Tor_\C([(m_1,0),(m_1+2m_2,0)])$ read
$$b^{m'_0}+\gamma a^{m'_1}=0,\quad a\tau^{m_0}=\xi$$
with fixed generic $\gamma\in\R^*$ and $\xi\in\C\setminus\R$.

Suppose that $m_0$ is odd. Then (up to a reparametrization $\rho\mapsto-\rho$) we obtain $m_0$ solutions
$$a=|\xi|,\quad \tau=\left(\frac{\xi}{|\xi|}\right)^{1/m_0},\quad b=\left(-\gamma|\xi|^{m'_1}\right)^{1/m'_0}\in\R^*.$$
Suppose that $m_0$ is even. Then $m'_0$ is even, and $m'_1$ is odd. Then we obtain $m_0$ solutions
$$a=\pm|\xi|,\ a\gamma<0,\quad \tau=\left(\frac{\xi}{a}\right)^{1/m_0},\quad b=\pm\left(-\gamma a^{m'_1}\right)^{1/m'_0}\in\R^*,$$
where the opposite roots in the formula for $\tau$ correspond to the same curve in view of the reparametrization $\rho\mapsto-\rho$.

It is easy to see that each curve $C$ is smooth at the intersection points with the toric divisors, and its real part is diffeomorphic to $S^1$. Hence, its nodes are elliptic or complex conjugate as required.
\end{proof}

\begin{definition}\label{def:real_enhancement_sign}
    Let $\Gamma$ be a real plane tropical curve and let $\Gamma'$ be its natural quotient by the action of $\bc$.
    Consider a conjugation invariant enhanced tropical curve $(\Gamma, h, \bs{\abstroppoint}, \{\varphi_V\}_{V\in \Gamma^0})$ that tropicalizes to $\Gamma$.
    If $\delta$ is a two dimensional cell of the subdivision dual to $\Gamma'$, then the enhancement induces a plane curve $C_{\delta}\in|{\mathcal L}_{\delta}|$ by taking the product of the equations obtained from the enhancement on the preimages in $\Gamma$ of the vertex of $\Gamma'$ corresponding to $\delta$.
    We define the Welschinger sign of the enhancement as $(-1)^s$, where $s$ is the total number of elliptic nodes in all the curves $C_{\delta}$, where $\delta$ runs over all two dimensional cells of the subdivision dual to $\Gamma'$.
\end{definition}

\begin{lemma}\label{lemma:real_enhancements_count}
    Let $\bs{\embalgpoint}$ be essentially peripheral real point conditions satisfying the assumptions of Lemma \ref{lemma:real_tropicalization}.
    Let $\Gamma$ be a plane tropical curve that satisfies the tropicalization of $\bs{\embalgpoint}$, so that the natural quotient $\Gamma':=\Gamma/\bc$ satisfies the reduced tropicalization of $\bs{\embalgpoint}$, and assume that $\Gamma$ and $\Gamma'$ satisfy the conclusions of Lemma \ref{lemma:real_tropicalization}.
    Assume additionally that $\Gamma$ has no edges of even weight.
    Denote by $\Gamma'_{\text{re}}$ the image in $\Gamma'$ of the subgraph of $\Gamma$ fixed by $\bc$ and by $\Gamma'_{\text{im}}$ the closure of its complement in $\Gamma'$.
    Then there exist
    \begin{equation*}
        \prod_{V\in \Gamma'^{0}_{\text{im}}} \mu(V)\prod_{E\in \Gamma'^{1}_{\text{im}}} \wt(E)^{-1}
    \end{equation*}
    isomorphism classes of conjugation invariant enhanced tropical curves that satisfy $\bs{\embalgpoint}$ and tropicalize to $\Gamma$.
    All of those enhanced tropical curves have the same Welschinger sign
    \begin{equation*}
        \prod_{V\in \Gamma'^{0}_{\text{im}}\setminus\Gamma'^{0}_{\text{re}}} (-1)^{\frac{\mu(V)}{4}}\cdot\prod_{V\in \Gamma'^{0}_{\text{re}}} (-1)^{\Int(V)}
    \end{equation*}
    where $\Int(V)$ is the number of integer points in the interior of the triangle dual to $V$.
\end{lemma}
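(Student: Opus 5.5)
We will construct the conjugation invariant enhancements vertex by vertex, following the regular orientation of $\Gamma'$ (Lemma \ref{lemma:real_tropicalization}(2)) exactly as in the proof of Lemma \ref{lemma:complex_enhancements_count}. We start from the fixed ends and the marked points and proceed along the orientation. At each trivalent vertex $V$ of $\Gamma'$ the two incoming edges then carry already determined intersection points with the corresponding toric divisors, and we must choose the limit curve(s) at the preimages of $V$ in $\Gamma$ matching these points. Conjugation invariance means that a limit curve at a $\bc$-fixed vertex must be real, while limit curves at a pair of swapped vertices $V_1,V_2=\bc(V_1)$ must be complex conjugate. The vertices of $\Gamma'$ split into three kinds, which we treat separately.

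\emph{(a) Vertices whose preimage is a pair of swapped vertices} (these lie in $\Gamma'^0_{\text{im}}\setminus\Gamma'^0_{\text{re}}$ and all incident edges are images of non-fixed edges). Here we choose a limit curve at $V_1$ freely and set the curve at $V_2$ to be its conjugate. By \cite[Lemma 3.5]{Shustin_tropical_enum05} this gives $\mu(V_1)(\wt(E_1)\wt(E_2))^{-1}$ choices. The product $C_\delta=C_{V_1}\cdot\overline{C_{V_1}}$ has its real nodes at the real intersection points of $C_{V_1}$ and $\overline{C_{V_1}}$, and these are solitary. Their number is congruent mod $2$ to the total intersection number $\mu(V_1)$, up to the nonreal ones, which come in pairs. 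Since the dual cell $\delta$ of the vertex of $\Gamma'$ is twice the cell of $V_1$, we have $\mu_{\Gamma'}(V)=4\mu(V_1)$, so the contribution to the sign is $(-1)^{\mu(V)/4}$. Some care is needed with the weights of edges in $\Gamma'$ versus $\Gamma$; we will check that the factor in the statement matches $\mu(V_1)/\wt\wt$ after identifying the weights of $\Gamma'$-edges with those of the preimage edges.

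\emph{(b) Vertices with real preimage and all incident edges real.} These are counted in $\Gamma'^0_{\text{re}}$ but not in $\Gamma'^0_{\text{im}}$. The vertex is a real vertex with odd-weight edges (no even weights by assumption) and real incoming boundary points. By the real version of \cite[Lemma 3.5]{Shustin_tropical_enum05}, odd weights force a unique real solution. That solution is a real rational curve whose real part is an immersed circle near the smooth boundary points, so all its real nodes are solitary or conjugate. A Pick-type parity count gives elliptic nodes $\equiv\Int(V)\pmod 2$. This contributes the factor $1$ to the count and $(-1)^{\Int(V)}$ to the sign.

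\emph{(c) Vertices in $\Gamma'^0_{\text{re}}\cap\Gamma'^0_{\text{im}}$}, i.e.\ a real vertex where a doubled (image-of-conjugate-pair) edge meets two real edges. The doubled edge of $\Gamma'$ has even weight, while the other two edges have odd weight. This is exactly the setting of Lemma \ref{lemma:4_valent_real_enhancement}, which gives $\mu(V)/\|\sigma_0\|$ real curves, each with elliptic nodes $\equiv\Int(V)\pmod 2$. It remains to check that $\mu(V)/\|\sigma_0\|$ equals the factor $\mu(V)\prod\wt(E)^{-1}$ over the imaginary edges at $V$. There is only the one imaginary edge, of weight $\|\sigma_0\|$, and the real incoming edge has odd weight contributing trivially. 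We must also arrange the orientation so that the doubled edge is incoming, which holds because conjugate pairs lie only on fixed ends, i.e.\ at sources.

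Multiplying over all vertices and regrouping the edge factors (each edge of $\Gamma'_{\text{im}}$ is incoming at exactly one vertex) gives the stated count and the stated sign. The sign is independent of the choices, since each local sign is.

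\textbf{Main obstacle.} We expect the main obstacle to be the bookkeeping in case (a): matching edge weights and multiplicities in $\Gamma$ versus $\Gamma'$, and the mod $2$ count of real intersection points of $C_{V_1}$ with its conjugate. For the latter we first try to show that every real intersection point is transversal and that nonreal intersections pair up, so that the parity equals $\mu(V_1)\bmod 2$ up to the correction encoded by $\mu(V)/4$.
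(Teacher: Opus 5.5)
Your case split and local computations match the paper's. Swapped pairs of non-real vertices are handled by \cite[Lemma 3.5]{Shustin_tropical_enum05} together with the parity of the real points of $C_{V_1}\cap\overline{C_{V_1}}$. Purely real vertices are handled by the unique real solution. Mixed vertices are handled by the lemma on $4$-valent real enhancements. The edge factors are then regrouped along the regular orientation.

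\emph{Missing step: parallelogram cells.} The Welschinger sign of an enhancement counts elliptic nodes of $C_\delta$ over \emph{all} two-dimensional cells $\delta$ of the subdivision dual to $\Gamma'$. Wherever two edges of $\Gamma'$ cross, $\delta$ is a parallelogram dual to no vertex, so multiplying local signs over vertices does not yet give the total sign. You need the extra observation, made at the end of the paper's proof, that such a $C_\delta$ has no elliptic nodes.
\begin{itemize}
\item $C_\delta$ is a union of binomial curves $x^py^q=c$ attached to the two crossing edges.
\item For an edge $E$ not fixed by $\bc$, the constant $c$ is non-real: for generic data it differs from the constant $\bar c$ carried by $\bc(E)$. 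Such a component therefore has no real points at all.
\item Hence the only real intersection points are between two real branches, and these are hyperbolic.
\end{itemize}

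\emph{Smaller points to fix.}
\begin{itemize}
\item Case (c) tacitly assumes exactly one doubled edge at $V$. You must exclude a real $5$-valent vertex of $\Gamma$ carrying two conjugate pairs with directions $a,b$. Balancing does this: the remaining real edge has direction $-2(a+b)$, hence even weight, which the hypothesis excludes.
\item The orientation claim follows most cleanly from $\bc$-invariance of the unique regular orientation of $\Gamma$. If one of $E,\bc(E)$ were outgoing at the real vertex $V$, so would the other, giving two outgoing edges.
\item In case (b), the real part being an immersed circle does not rule out hyperbolic nodes. That the unique real solution has none is precisely the input from \cite[Proposition 6.1]{Shustin_tropical_enum05}. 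After that, the curve has $\Int(V)$ nodes and non-real ones pair up, so the number of elliptic nodes is $\equiv\Int(V)\pmod 2$.
\item The bookkeeping you flag in (a) needs no correction term. Weights in $\Gamma'$ are doubled, $\wt(E_i')=2\wt(E_i)$, and $\mu(V)=4\mu(V_1)$. Hence $\mu(V_1)/(\wt(E_1)\wt(E_2))=\mu(V)/(\wt(E_1')\wt(E_2'))$ exactly. The parity of the elliptic nodes is that of the intersection number $\mu(V_1)=\mu(V)/4$. This uses that $C_{V_1}$ and its conjugate meet the toric boundary at distinct, non-real points.
\end{itemize}
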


\begin{proof}
    For a vertex $V\in \Gamma'^{0}$, we will count the number of limit curves associated to preimages of $V$ in $\Gamma$ and the contribution of this vertex to the Welschinger sign.
    We will separate into cases depending on whether $V \in \Gamma'^{0}_{\text{im}}$ and $V \in \Gamma'^{0}_{\text{re}}$.
    \begin{description}
        \item[$V \in \Gamma'^{0}_{\text{re}}\setminus\Gamma'^{0}_{\text{im}}$] 
        This case is identical to the complex case in Lemma \ref{lemma:complex_enhancements_count}. 
        The vertex $V$ has a unique preimage in $\Gamma$, denote it by $V$ as well. 
        Following the proof of \cite[Lemma 3.5]{Shustin_tropical_enum05} (see also \cite[Proposition 6.1]{Shustin_tropical_enum05}), one sees that if the boundary conditions defined by the incoming incident edges of $V$ (with respect to the regular orientation) are real, then among the limit curves enumerated by Lemma \ref{lemma:complex_enhancements_count} there is exactly one real limit curve with no hyperbolic nodes.

        \item[$V \in \Gamma'^{0}_{\text{im}}\cap \Gamma'^{0}_{\text{re}}$] 
        The vertex $V$ has a unique preimage in $\Gamma$, denote it by $V$ as well. 
        The vertex $V$ cannot be a $5$-valent vertex of $\Gamma$, since then it would have an adjacent edge of even weight by the balancing condition; hence $V$ is a $4$-valent vertex of $\Gamma$.
        Since both $\Gamma$ and $\Gamma'$ are regular by Lemma \ref{lemma:real_tropicalization}, the two edges of $\Gamma$ adjacent to $V$ that are not fixed by $\bc$ are oriented towards $V$ via the regular orientation.
        Denote by $\sigma_0$ the edge of the dual subdivision corresponding to the unique edge of $\Gamma'_{\text{im}}$ incident to $V$ and by $\sigma_1, \sigma_2$ the other two edges of the triangle dual to $V$.
        Thus we need to enumerate the number of real rational curves $C\in|{\mathcal L}_{\delta_V}|$ that pass through a given real point in $\Tor_\R(\sigma_1)$, and two complex conjugate points on $\Tor_\C(\sigma_0)$, and one more point on $\Tor_\R(\sigma_2)$ and are unibranch at all these four points.
        By Lemma \ref{lemma:4_valent_real_enhancement}, there are exactly $\frac{\mu(V)}{\|\sigma_0\|}$ such curves and the parity of the number of elliptic nodes of each of these curves is $|\Int(\delta)\cap\Z^2|\mod 2$.

        \item[$V \in \Gamma'^{0}_{\text{im}}\setminus\Gamma'^{0}_{\text{re}}$] 
        The vertex $V$ has $2$ preimages in $\Gamma$, denote them by $V_1$ and $V_2$.
        Since the enhanced curve is required to be compatible with conjugation, the limit curve corresponding to $V_2$ is determined by the limit curve corresponding to $V_1$, for which there are 
        \[ \frac{\mu(V_1)}{\wt(E_1)\wt(E_2)} = \frac{\mu(V)/4}{\wt(E_1')/4\cdot \wt(E_2')/2} = \frac{\mu(V)}{\wt(E_1')\wt(E_2')} \]
        choices by Lemma \ref{lemma:complex_enhancements_count}, where $E_1, E_2$ are the edges of $\Gamma$ adjacent to $V_1$ and oriented towards $V_1$ via the regular orientation, and $E_1', E_2'$ are their images in $\Gamma'$.
        
        The elliptic nodes of the curve $C_{V}$ are exactly the real intersection points of the images of $\varphi_{V_1}$ and $\varphi_{V_2}$ in $\Tor_\C(\delta_V)$, and their number has the same parity as the number of such intersection points (since all other intersection points come in complex-conjugate pairs).
        The number of those intersection points is equal to $\mu(V_1)^2$ by Bernstein-Kushnirenko-Khovanskii theorem.
        Thus the contribution of this vertex to the Welschinger sign is 
        \[ (-1)^{\mu(V_1)^2} = (-1)^{\mu(V_1)} = (-1)^{\mu(V)/4}. \]
    \end{description}

    The proof is concluded by noting that we have divided by the weight of every even edge once (when treating the vertex for which this edge is incoming), and that the curve $C_{\delta}$ for a parallelogram $\delta$ has no elliptic nodes, so it does not contribute to the Welschinger sign.
\end{proof}

\begin{definition}
    Let $(\Gamma, h, \bs{\abstroppoint}, \{\varphi_V\}_{V\in \Gamma^0})$ be a conjugation invariant enhanced tropical curve.
    Modification data for $(\Gamma, h, \bs{\abstroppoint}, \{\varphi_V\}_{V\in \Gamma^0})$ as in Definition \ref{def:modification_data} is called conjugation invariant if for every edge $E$ of $\Gamma$ the modification data associated to $\bc(E)$ is conjugate to the modification data associated to $E$.
    In particular, the modification data associated to an edge fixed by $\bc$ is real.

    The Welschinger sign of a modified tropical curve is defined as the Welschinger sign of the enhancement times $(-1)^r$ where $r$ is the total number of elliptic nodes in all the modified curves.
\end{definition}

\begin{lemma}\label{lemma:even_in_real_modification_vanish}
    Let $(\Gamma, h, \bs{\abstroppoint}, \{\varphi_V\}_{V\in \Gamma^0})$ be a conjugation invariant enhanced tropical curve.
    Let $E$ be a bounded edge of $\Gamma$ of even weight.
    Then the sum of Welschinger signs of all conjugation invariant modification data for $(\Gamma, h, \bs{\abstroppoint}, \{\varphi_V\}_{V\in \Gamma^0})$ that have modification data associated to $E$ vanishes.
\end{lemma}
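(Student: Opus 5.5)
The sum over all conjugation invariant modification data factors as a product over bounded edges, up to identifying data on $E$ and $\bc(E)$. This is because, once the enhancement $\{\varphi_V\}$ is fixed, the coherence conditions of Definition \ref{def:modification_data} only tie the datum on an edge to the limit curves at its endpoints (Lemma \ref{lemma:number_of_modifications}). The Welschinger sign is also multiplicative: it is the enhancement sign times $(-1)^{r}$, with $r$ a sum of node counts of the individual modified curves. So it suffices to show that the signed sum over the admissible conjugation invariant data attached to the single edge $E$ vanishes.

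First I would dispose of the case $\bc(E)\neq E$. By Lemma \ref{lemma:real_tropicalization}, $\Gamma$ and $\Gamma'$ are regular with ends of weight $1$, and the non-fixed edges of $\Gamma$ are exactly those whose images in $\Gamma'$ carry the doubled weight. I expect this forces every even bounded edge relevant here to be fixed by $\bc$. If not, I would instead pair the data on $E$ and $\bc(E)$, so that the intersection points of the two modified curves contribute the elliptic nodes, and redo the argument below with the product curve. I will concentrate on $\bc(E)=E$, where the datum on $E$ must be real.

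\emph{Unmarked edge of even weight $\ell$.} After the real toric change of coordinates, $C_{mod}$ has equation $\alpha x^{-1}+\beta x+Q(y)=0$. Here $\alpha,\beta\in\R^*$ are fixed by coherence, and $Q$ is monic of degree $\ell$ with vanishing $y^{\ell-1}$ coefficient. Rationality of $C_{mod}$ with unibranch intersections with the two side divisors forces $Q$ to be a rescaled Chebyshev polynomial, $Q(y)=c\,T_\ell(y/r)$ with $c^2=4\alpha\beta$ and $c\,2^{1-\ell}=r^\ell$; indeed, substituting $x=\sqrt{\alpha/\beta}\,w$ gives $\sqrt{\alpha\beta}(w+w^{-1})=-Q(y)$. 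These are the $\ell$ solutions of Lemma \ref{lemma:number_of_modifications}. If $\alpha\beta<0$ there is no real solution and the sum is trivially zero. If $\alpha\beta>0$, since $T_\ell$ is even, the real solutions are exactly the two polynomials $Q_\pm=\pm 2\sqrt{\alpha\beta}\,T_\ell(y/r_\pm)$. The nodes of $C_{mod}$ sit over the $\ell-1$ critical points of $Q$, where $Q$ takes the values $\pm 2\sqrt{\alpha\beta}$ (alternately). Over such a point $w+w^{-1}=\pm2$ has a double root $w=\mp1$. Whether the node is elliptic or hyperbolic depends on the sign pattern: at a critical point of $Q$ where $-Q/\sqrt{\alpha\beta}$ equals $\pm2$, the real branch structure is determined by whether $-Q$ has a local maximum or minimum there. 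Switching $Q_+\leftrightarrow Q_-$ swaps maxima and minima, so the elliptic nodes of one curve become the hyperbolic nodes of the other. For $\ell$ even the counts are $\ell/2$ and $\ell/2-1$, so the two real curves have opposite Welschinger signs and cancel.

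\emph{Marked edge of even weight $\ell$.} Here the datum is $(C_{1,mod},C_{2,mod},\Omega)$. $C_{1,mod}$ is $(y-\tau)^\ell+ax^{-1}=0$ with $\tau^\ell$ fixed real, so there are $0$ or $2$ real choices $\pm\tau_0$. Each real $C_{1,mod}$ and the uniquely determined $C_{2,mod}$ are smooth (peripherally unibranch rational curves with these Newton triangles are parametrized monomially), hence carry no nodes. The sign is therefore that of $\Omega$, the datum on an unmarked edge of even weight $\ell$, whose signed sum vanishes by the previous case. The main obstacle is the elliptic/hyperbolic bookkeeping for the Chebyshev curves. I would verify it directly on the real picture of $w+w^{-1}=-Q(y)/\sqrt{\alpha\beta}$, checking the case $\ell=2$ by hand: one real solution is a real conic with $0$ nodes and the other has an isolated point, as in \cite[Lemma 2.49]{ItenbergMikhalkinShustin2007}.
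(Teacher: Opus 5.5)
Your overall route matches the paper's: reduce the marked case to the unmarked datum $\Omega$, then compare the two real Chebyshev solutions. The gap is in the step that carries the content of the lemma, the elliptic/hyperbolic bookkeeping, and that step is wrong.

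\emph{The second real solution is misidentified.} Use your convention that $\ell$ is the even weight. Write $Q=c\,T_\ell(y/r)$ with $c^2=4\alpha\beta$ and $r^\ell=c\,2^{\ell-1}$ (not $c\,2^{1-\ell}$). Reality of $Q$ forces $r^2\in\R$, so the two real solutions are the one with $r$ real and the one with $r$ purely imaginary. They are related by $Q_2(y)=(-1)^{\ell/2}Q_1(iy)$, not by $Q\mapsto -Q$. Both are monic of even degree, so neither is $-Q_+$ up to a real rescaling of $y$. For $\ell\equiv 0 \bmod 4$ both even have $c>0$: for $\ell=4$, $\alpha\beta=1$ they are $y^4-4y^2+2$ and $y^4+4y^2+2$. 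So the labels $Q_\pm=\pm2\sqrt{\alpha\beta}\,T_\ell(y/r_\pm)$ are already incorrect.

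\emph{The claimed mechanism cannot work anyway.} The map $x\mapsto -x$ is a real isomorphism from $\{\alpha x^{-1}+\beta x+Q=0\}$ to $\{\alpha x^{-1}+\beta x-Q=0\}$, so negating $Q$ never changes node types. In terms of your own criterion, a node over a critical point with $|Q|=2\sqrt{\alpha\beta}$ is elliptic exactly when $|Q|$ has a local maximum there, whatever the sign of $Q$. Hence your counts $\ell/2$ and $\ell/2-1$ are false once $\ell\ge 4$; for $\ell=4$ the true elliptic counts are $3$ and $0$. Your $\ell=2$ sanity check is also misstated: neither curve is nodeless, since one has an elliptic node and the other a hyperbolic node.

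\emph{The correct count, which is what the paper uses.} For $r$ real, $|T_\ell|\le 1$ on $[-1,1]$ gives $|Q|\le 2\sqrt{\alpha\beta}$ for $|y|\le |r|$. So all $\ell-1$ nodes are real isolated points, i.e.\ elliptic. For $r$ purely imaginary, the critical points $y=r\cos(\pi j/\ell)$, $1\le j\le \ell-1$, are real only for $j=\ell/2$. That gives one real node at $y=0$, where $|Q|$ has a local minimum, so it is hyperbolic. The other $\ell-2$ nodes are non-real and come in conjugate pairs. The elliptic counts $\ell-1$ and $0$ have opposite parity, and that is what cancels. Your conclusion comes out right only because $\ell/2$ and $\ell/2-1$ also happen to differ by one.

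\emph{Drop the fallback for $\bc(E)\neq E$.} Even bounded edges need not be fixed by $\bc$: an even edge of $\Gamma'$ whose weight is divisible by $4$ lifts to a conjugate pair of even edges. For such a pair, the paper's lemma on conjugate pairs of edges gives $\ell$ data with no real nodes. The signed sum is then $\ell\neq 0$, so it cannot vanish. The lemma must be read with $E$ fixed by $\bc$, which is the only case the paper's proof treats and the only case used later.
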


\begin{proof}
    Since the modification data associated to a marked edge contains modification data associated to an unmarked edge of the same weight, it is enough to consider the case of an unmarked edge.
    Denote the weight of the edge by $2\ell$.

    Let $P_{mod}=\conv\{(-1,0),(1,0),(0,2\ell)\}$.
    We are looking for rational curves $C_{mod}$ whose defining equation is of the form $b_{-1}x^{-1} + g(y) +b_1x$ for fixed $b_{-1}, b_1\in \R^{\times}$ and $g(y)\in \R[y]$ is a monic polynomial with vanishing coefficient of $y^{2\ell-1}$.
    Following the proof of \cite[Lemma 3.9]{Shustin_tropical_enum05}, we can see that the solutions for $g(y)$ are $2\sqrt{b_1b_{-1}}(-1)^k T_{2\ell}\left(\zeta_{4\ell}^k \frac{y}{2\sqrt[2\ell]{2\sqrt{b_1b_{-1}}}}\right)$, where $T_{2\ell}$ is the $2\ell$-th Chebyshev polynomial of the first kind, $\zeta_{4\ell}$ is a primitive $4\ell$-th root of unity, and $k=0,1,\ldots, 2\ell-1$.
    If $b_1b_{-1}<0$, there are no real solutions, and if $b_1b_{-1}>0$, there are exactly two real solutions corresponding to $k=0$ and $k=\ell$ (recall that $T_{2\ell}$ is even, so $T_{2\ell}(ix)$ is real).
    The choice $k=0$ gives a curve with $2\ell-1$ real elliptic nodes, and the choice $k=\ell$ gives a curve with $2\ell-2$ imaginary nodes and one real hyperbolic node.
    Thus their Welschinger signs cancel each other.
\end{proof}

\begin{lemma}\label{lemma:odd_in_real_modification_count}
    Let $(\Gamma, h, \bs{\abstroppoint}, \{\varphi_V\}_{V\in \Gamma^0})$ be a conjugation invariant enhanced tropical curve and let $E\in \Gamma^{(1)}$ be a bounded edge of odd weight fixed by $\bc$.
    Then there exists a unique equivalence class of real modification data associated with $E$, and the number of elliptic nodes in it is even.
\end{lemma}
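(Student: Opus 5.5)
Following the proof of Lemma \ref{lemma:even_in_real_modification_vanish}, we first reduce to the case of an unmarked edge. For a marked edge of odd weight $\ell$ fixed by $\bc$, the modification data (Definition \ref{def:modification_data}(2)) consists of $C_{1,mod}$, $C_{2,mod}$ and the data $\Omega$ of an unmarked edge of the same weight $\ell$. Since $E$ is fixed by $\bc$, the marked point $\embalgpoint_i$ on it is real, so $\xi_i$ and the local data of $C_{V_1},C_{V_2}$ are real. The curve $C_{1,mod}$ has equation $(y-\tau)^\ell+ax^{-1}=0$ with $a$ real and $\tau^\ell$ a fixed real number, and since $\ell$ is odd there is exactly one real root $\tau$. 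The curve $C_{2,mod}$ is uniquely determined, hence real. Both are given by equations $x^{\pm1}=c(y-\tau)^\ell$ in suitable coordinates, so they are smooth graphs with no nodes. It therefore suffices to prove the statement for $\Omega$, that is, for an unmarked edge.

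For an unmarked edge of odd weight $\ell$ fixed by $\bc$, the limit curves $C_{V_1},C_{V_2}$ are real, and so are the coefficients $b_{-1},b_1\in\R^\times$ and the intersection point $\eta(0)$. By the same Chebyshev description as in Lemma \ref{lemma:even_in_real_modification_vanish} (from \cite[Lemma 3.9]{Shustin_tropical_enum05}), the $\ell$ admissible curves $C_{mod}$ have equations $b_{-1}x^{-1}+g(y)+b_1x=0$ with
\[ g(y)=2\beta(-1)^kT_\ell\bigl(\zeta^k y/c\bigr), \]
where $\beta^2=b_1b_{-1}$, $\zeta$ is an appropriate root of unity, $c^\ell$ is determined by $\beta$, and $k$ runs over $\ell$ values. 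Because $\ell$ is odd, $T_\ell$ is odd, so both signs of $\beta$ give the same family up to relabelling. We will show that exactly one $k$ yields a real polynomial, namely the one with $c$ the real $\ell$-th root and $\zeta^k=1$, after absorbing signs using the oddness of $T_\ell$. This holds independently of the sign of $b_1b_{-1}$: if $b_1b_{-1}<0$ then $\beta$ is imaginary, but then $x\mapsto$ rescaling by $\sqrt{-1}$ combined with $T_\ell(iy)=i^{\ell}\cdot(\text{real})$ still gives exactly one real solution. Checking this case is the delicate step. I would first try to verify it directly by writing the rational parametrization $x=\alpha\rho^{\ell}$ and $y$ a degree-one expression in $\rho,\rho^{-1}$, i.e. $x=\alpha t$, $y=\gamma(t^{1/\ell}\cdot)$. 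More concretely, I would parametrize $C_{mod}$ by $\rho\mapsto(x,y)=(\alpha\rho^\ell,\ \rho+\kappa\rho^{-1})$ with real $\alpha,\kappa$. The vanishing of the $y^{\ell-1}$ coefficient is automatic for this form. Matching the prescribed boundary coefficients gives $\alpha^{\pm}$ and $\kappa^\ell$ real, so the unique real $\ell$-th root gives uniqueness among real curves. Equivalence classes of toric coordinate changes do not affect the count.

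Finally, for the parity of the nodes, the curve $\rho\mapsto(\alpha\rho^\ell,\rho+\kappa\rho^{-1})$ has $\ell-1$ nodes, by the genus formula for $P_{mod}$, which has $\ell-1$ interior points. Nodes arise from pairs $\rho_1\ne\rho_2$ with $\rho_1^\ell=\rho_2^\ell$ and $\rho_1\rho_2=\kappa$, that is, $\rho_2=\omega\rho_1$ with $\omega^\ell=1$, $\omega\ne1$, and $\rho_1^2=\kappa\omega^{-1}$. Elliptic (solitary real) nodes are those with $\rho_2=\overline{\rho_1}$ and the node point real. Conjugation acts on the set of nodes, and the nodes indexed by $\omega$ and $\overline\omega=\omega^{-1}$ are swapped. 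Since $\ell$ is odd, $\omega\ne\overline\omega$ always, so no node is fixed except through this pairing. I will count the real nodes as those where the pair $\{\omega,\omega^{-1}\}$ gives a real point, and show that they come in pairs $(\omega,\omega^{-1})$, each pair contributing either two elliptic nodes or none. This makes the number of elliptic nodes even. This pairing argument, together with ruling out hyperbolic real nodes when $\ell$ is odd, is the step I expect to require the most care.
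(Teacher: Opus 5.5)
Your proposal is correct, and its outer structure matches the paper's. The marked case is reduced to the unmarked one exactly as there: the real $\tau$ is unique since $\ell$ is odd, $C_{1,mod}$ and $C_{2,mod}$ are smooth, and $C_{2,mod}$ is unique, hence real.

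The difference is in the unmarked edge. The paper takes the $k=0$ Chebyshev solution and asserts that it has $\ell-1$ elliptic nodes. You instead parametrize all solutions as $\rho\mapsto(\alpha\rho^\ell,\rho+\kappa\rho^{-1})$ with $\alpha=-1/b_1$ and $\kappa^\ell=b_1b_{-1}$. For $\ell\ge3$, the coefficient $-\ell\kappa$ of $y^{\ell-2}$ in the resulting Dickson polynomial distinguishes the $\ell$ choices of $\kappa$, and it forces $\kappa$ to be real for a real curve.

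This gives uniqueness of the real solution uniformly in the sign of $b_1b_{-1}$, and it makes the nodes explicit. That is a genuine gain, because the paper's count of $\ell-1$ elliptic nodes is only right when $b_1b_{-1}>0$. When $b_1b_{-1}<0$, the real solution has no real nodes at all. For example, $\ell=3$, $\kappa=-1$ gives $x-x^{-1}=y^3+3y$, whose nodes are $(i,i)$ and $(-i,-i)$. The parity is still even, but only your treatment actually covers this case.

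Your parity step, however, is phrased incorrectly. A node $\{\rho_1,\omega\rho_1\}$ carries both labels $\omega$ and $\omega^{-1}$, so ``conjugation swaps the nodes indexed by $\omega$ and $\bar\omega$'' says nothing about whether a given node is fixed. In fact, for $\kappa>0$ every node is fixed, since $|\rho_1|^2=\kappa$ gives $\bar\rho_1=\kappa/\rho_1=\omega\rho_1$.

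The clean finish uses only what you already have:
\begin{itemize}
\item there are $\ell-1$ nodes, an even number;
\item non-real nodes come in conjugate pairs;
\item no node is hyperbolic, because $\rho_1$ and $\omega\rho_1$ cannot both be real when $\omega\ne1$ is an $\ell$-th root of unity with $\ell$ odd.
\end{itemize}
Hence the number of elliptic nodes equals the number of real nodes, which is even. It is $\ell-1$ if $\kappa>0$, and $0$ if $\kappa<0$.
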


\begin{proof}
    Consider first the case of an unmarked edge, and denote its weight by $\ell$.
    Similarly to the proof of Lemma \ref{lemma:even_in_real_modification_vanish}, we can see that the defining equation of the modification data associated to $E$ is of the form $b_{-1}x^{-1} + 2\sqrt{b_1b_{-1}}T_{\ell}\left(\zeta_{\ell}^k \frac{y}{2\sqrt[\ell]{2\sqrt{b_1b_{-1}}}}\right)+b_1x$ for some $k=0,1,\ldots, \ell-1$.
    The only real solution is given by $k=0$, and it has $\ell-1$ real elliptic nodes.

    Now consider the case of marked edge.
    Out of the solutions considered in the proof of Lemma \ref{lemma:number_of_modifications} for the equation defining $C_{1,mod}$ only one solution is real.
    The curves $C_{1,mod}$ and $C_{2,mod}$ are smooth, so the number of elliptic nodes in the modification data associated with $E$ equals the number of elliptic nodes in the modification data associated with the unmarked edge of weight $\ell$ occurring in the modification data associated with $E$.
\end{proof}

\begin{lemma}\label{lemma:modification_conjugate_pair_edges}
    Let $(\Gamma, h, \bs{\abstroppoint}, \{\varphi_V\}_{V\in \Gamma^0})$ be a conjugation invariant enhanced tropical curve and let $E_1, E_2$ be a pair of edges of $\Gamma$ each of weight $\ell$ that are interchanged by $\bc$.
    Then there are $\ell$ equivalence classes of conjugation-invariant modification data associated with $E_1$ and $E_2$, and they have no real nodes.
\end{lemma}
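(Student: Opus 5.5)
The idea is that conjugation invariance makes the modification data on $E_2$ a function of the data on $E_1$. I would first show that the map sending a conjugation invariant pair of modification data $(\Omega_1,\Omega_2)$ to $\Omega_1$ is a bijection onto the set of all coherent modification data associated with $E_1$, taken up to equivalence. The key input is that the enhancement $\{\varphi_V\}$ is itself conjugation invariant. Since $\bc$ interchanges $E_1$ and $E_2$, it also interchanges their endpoints, with $\varphi_{\bc(V)}=\varphi_V\circ\conj$.

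Given coherent data $\Omega_1=(\Phi,\zeta,\varphi_{mod})$ for $E_1$ in the unmarked case, I set $\Omega_2:=\overline{\Omega_1}$. Concretely, this means taking the same toric change of coordinates $\Phi$ (it is defined over $\Z$), the shift $\bar\zeta$, and the conjugate limit curve $\conj\circ\varphi_{mod}\circ\conj$. I then check directly from Definition \ref{def:modification_data} that $\Omega_2$ is coherent for $E_2$. This holds because the local equations of $C_{\bc(V_i)}$ at the divisor point are the complex conjugates of those of $C_{V_i}$, and because conjugation preserves the vanishing of the $y^{\ell-1}$ coefficient.

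The same argument applies in the marked case. There $E_1$ and $E_2$ carry a pair of conjugate marked points $\embalgpoint_i,\embalgpoint_j=\overline{\embalgpoint_i}$, so $\xi_j=\overline{\xi_i}$. Conjugation then carries $C_{1,mod},C_{2,mod}$ for $E_1$ to valid choices for $E_2$, and it carries the inner unmarked datum compatibly. Conversely, any conjugation invariant pair is of this form by definition. The equivalence relation, which is a change of toric coordinates, commutes with conjugation, so the bijection descends to equivalence classes.

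With the bijection in hand, I would count. Lemma \ref{lemma:number_of_modifications} gives $\ell$ classes for an unmarked edge of weight $\ell$, and hence $\ell$ conjugation invariant pairs. In the marked case it gives $\ell\cdot\ell$ classes. That would yield $\ell^2$ rather than $\ell$, so this is the point I expect to be the main obstacle.

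To resolve it, I would first check whether an interchanged pair $E_1,E_2$ can be marked at all. Under the essentially peripheral hypothesis, conjugate pairs of points lie only on the boundary divisors. By the peripheral marking convention, non-fixed marked points are univalent vertices of $\ogamma$, so they sit on ends rather than on bounded edges. Hence bounded edges interchanged by $\bc$ are unmarked, and only case (1) of Definition \ref{def:modification_data} arises. The count is then exactly $\ell$.

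Finally, I would show there are no real nodes. The curve $C_{mod}$ for $E_1$ lives in the stratum attached to $E_1$, and its conjugate lives in the stratum attached to $E_2\neq E_1$. Consequently, the nodes arising from this pair of modifications form conjugate pairs of points in distinct components of the degeneration, and none of them is fixed by the real structure. So they contribute no elliptic nodes, and the Welschinger sign is unaffected.
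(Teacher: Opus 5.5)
Your proposal is correct and follows the paper's argument: choose any coherent datum for $E_1$ (the Chebyshev-type solutions of the unmarked-edge equation), take its conjugate for $E_2$, and note that the two modified curves are swapped by the real structure, so none of their nodes is real. Your worry about the marked case is justified: a marked interchanged pair would give $\ell^2$ classes, and the paper's proof silently treats only unmarked edges. Your resolution via the essentially peripheral setting is the right one, provided you add the one-line remark that a $\bc$-fixed marked point cannot lie in the interior of an edge $E_1\neq\bc(E_1)$.
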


\begin{proof}
    For one of the edges  we can pick any solution for the equation defining the modification data as in Lemma \ref{lemma:even_in_real_modification_vanish}.
    The equation for the other edge is the conjugate of the equation for the first edge.
    Both curves have no real nodes.
\end{proof}

\begin{theorem}\label{thm:real_final_count}
    Let $\bs{\embalgpoint}$ be essentially peripheral real point conditions satisfying the assumptions of Lemma \ref{lemma:real_tropicalization}.
    Let $\Gamma'$ be a regular trivalent plane tropical curve of genus $g$ that satisfies the reduced tropicalization of $\bs{\embalgpoint}$.
    Denote by $\Gamma'_{\text{re}}$ the subgraph of $\Gamma'$ containing all odd edges and their endpoints, and by $\Gamma'_{\text{im}}$ the subgraph of $\Gamma'$ containing all even edges and their endpoints.
    Assume that $\Gamma'_{\text{re}}\ne \emptyset$.
    Then the signed count of algebraic curves passing through $\bp$ and tropicalizing to some $\Gamma$ whose natural quotient is $\Gamma'$ is zero if $\Gamma'_{\text{im}}$ has any connected component $K$ with $\chi(K) < 1$ or $|K\cap \Gamma'_{\text{re}}| > 1$, and equal to 
    \begin{equation}\label{eq:real_conj_points_num}
        \prod_{V\in \Gamma'^{0}_{\text{re}}}(-1)^{\Int(V)}\prod_{V\in \Gamma'^{0}_{\text{im}}}\frac{\mu(V)}{2}\prod_{V\in \Gamma'^{0}_{\text{im}}\setminus\Gamma'^{0}_{\text{re}}}(-1)^{\mu(V)/4}
    \end{equation}
    otherwise.
\end{theorem}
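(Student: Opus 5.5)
The plan is to assemble the count in three layers, following the scheme of Theorem \ref{thm:complex_tangency_num}: lifts $\Gamma$ of $\Gamma'$, conjugation invariant enhancements, and conjugation invariant modification data, each glued uniquely into a real curve by the real version of Lemma \ref{lemma:complex_patchworking}.

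\textbf{Step 1: the lifts $\Gamma$ of $\Gamma'$.} By Lemma \ref{lemma:real_tropicalization}, $\Gamma$ is regular of genus $g$ with all ends of weight $1$, and $\Gamma'=\Gamma/\bc$. Over an odd edge of $\Gamma'$ there is a single edge of $\Gamma$, fixed by $\bc$. Over an even edge of weight $2w$ there are two options:
\begin{itemize}
\item a single real edge of even weight $2w$;
\item a pair of conjugate edges of weight $w$.
\end{itemize}
By Lemma \ref{lemma:even_in_real_modification_vanish}, any lift containing a real bounded edge of even weight has total signed contribution zero. So only lifts in which every even edge of $\Gamma'$ splits into a conjugate pair contribute. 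In such a lift, every connected component $K$ of $\Gamma'_{\text{im}}$ lifts to two conjugate copies glued along the vertices of $K\cap\Gamma'_{\text{re}}$.

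Since $\Gamma$ must be regular of genus $g$, the same as $\Gamma'$, the doubling must not create extra cycles. A component with $\chi(K)<1$, or one meeting $\Gamma'_{\text{re}}$ in more than one vertex, would force the genus of $\Gamma$ to exceed $g$ or contradict regularity. Hence no admissible lift exists and the count is zero. I expect this Euler-characteristic bookkeeping, and checking that the surviving lift is unique and satisfies the tropical conditions, to be the main obstacle. In particular one must handle conjugate fixed ends of weight $1$ merging into weight-$2$ ends of $\Gamma'$, and ends of $\Gamma'_{\text{im}}$ not meeting $\Gamma'_{\text{re}}$ in trees. Otherwise every component $K$ is a tree meeting $\Gamma'_{\text{re}}$ in at most one point, and the lift $\Gamma$ is uniquely determined.

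\textbf{Step 2: enhancements and modifications.} When all edges of the lift $\Gamma$ are odd, Lemma \ref{lemma:real_enhancements_count} applies. It gives $\prod_{V\in\Gamma'^0_{\text{im}}}\mu(V)\prod_{E\in\Gamma'^1_{\text{im}}}\wt(E)^{-1}$ enhancements, all with the sign $\prod_{V\in \Gamma'^0_{\text{im}}\setminus\Gamma'^0_{\text{re}}}(-1)^{\mu(V)/4}\prod_{V\in\Gamma'^0_{\text{re}}}(-1)^{\Int(V)}$. For the modifications:
\begin{itemize}
\item each real odd bounded edge contributes a unique modification with an even number of elliptic nodes (Lemma \ref{lemma:odd_in_real_modification_count}), so it changes neither the count nor the sign;
\item each conjugate pair of edges of weight $w=\wt(E')/2$, for $E'\in\Gamma'^1_{\text{im}}$ bounded, contributes $w$ modifications with no real nodes (Lemma \ref{lemma:modification_conjugate_pair_edges}).
\end{itemize}
Marked conjugate pairs sit only at univalent ends, so there are no marked even bounded edges.

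\textbf{Step 3: the weight bookkeeping.} Multiply the enhancement count by $\prod_{E'}\wt(E')/2$ over bounded edges of $\Gamma'_{\text{im}}$. Each such factor cancels the $\wt(E')^{-1}$ up to a factor $1/2$. Unbounded even ends of $\Gamma'$ are the weight-$2$ fixed ends, contributing $\wt^{-1}=1/2$. In a tree $K$ the number of edges equals the number of vertices plus one minus the number of leaves. Redistributing the factors $1/2$ then gives exactly $\prod_{V\in\Gamma'^0_{\text{im}}}\mu(V)/2$, and this redistribution should be checked against the one-attachment condition on $K$.

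Finally, Lemma \ref{lemma:complex_patchworking}, with uniqueness, shows that conjugation invariant data lift to real curves. Summing gives \eqref{eq:real_conj_points_num}.
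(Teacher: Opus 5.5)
Your route is the paper's: discard lifts containing a $\bc$-fixed even edge via the vanishing lemma for real even modifications, force each component $K$ of $\Gamma'_{\text{im}}$ to be a tree meeting $\Gamma'_{\text{re}}$ in exactly one vertex, then multiply the enhancement and modification counts. There is, however, a genuine error in Step~1: the lift $\Gamma$ is \emph{not} unique.

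The marked points are labelled, and the two points $\embalgpoint_i,\embalgpoint_j$ of a conjugate pair are distinct. The preimage of $K$ is always two conjugate copies of $K$ glued at the attaching vertex. But for each weight-$2$ fixed end of $K$ you must still choose which copy carries the end through $\embalgpoint_i$ and which carries the end through $\embalgpoint_j$. Only the simultaneous swap of the two copies is an isomorphism of marked curves. The other choices give non-isomorphic marked tropical curves and genuinely different real algebraic curves, since the limit curves in a given copy pass through different boundary points. The paper phrases this as follows: at each vertex $V\in\Gamma'^0_{\text{im}}\setminus\Gamma'^0_{\text{re}}$ there are two ways to connect the lifted incident edges to the two conjugate preimages of $V$. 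Hence there are $2^{|\Gamma'^0_{\text{im}}\setminus\Gamma'^0_{\text{re}}|}$ lifts, each with the same signed count.

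This missing factor is exactly why your Step~3 cannot close. Per lift, the enhancement lemma together with the modification lemmas gives $\prod_{V\in\Gamma'^0_{\text{im}}}\mu(V)\cdot 2^{-|\Gamma'^1_{\text{im}}|}$ times the sign. Each bounded even edge contributes $\wt(E')^{-1}\cdot\wt(E')/2=1/2$, and each weight-$2$ end contributes $1/2$.

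Now suppose $K$ has $v$ trivalent vertices outside $\Gamma'_{\text{re}}$. The attaching vertex is a leaf of $K$, because a trivalent vertex with an odd edge has exactly one even edge. All ends of $K$ are weight-$2$ fixed ends. So $K$ has $v$ bounded edges and $v+1$ ends, i.e.\ $2v+1$ edges, but only $v+1$ vertices in $\Gamma'^0$. A single lift therefore yields $2^{-(2v+1)}$ where the formula requires $2^{-(v+1)}$. No redistribution of the factors $1/2$ can fix this, and your tree identity is also incorrect.

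The missing $2^{v}$ is precisely the number of lifts over $K$: $2^{v+1}$ labellings of its $v+1$ conjugate end pairs, modulo the global swap. As written, your argument produces the stated formula divided by $2^{|\Gamma'^0_{\text{im}}\setminus\Gamma'^0_{\text{re}}|}$.

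By contrast, the step you flagged as the main obstacle is short. Compare $\chi(\Gamma)=\chi(\Gamma'_{\text{re}})+2\sum_K(\chi(K)-|K\cap\Gamma'_{\text{re}}|)$ with $\chi(\Gamma')=\chi(\Gamma'_{\text{re}})+\sum_K(\chi(K)-|K\cap\Gamma'_{\text{re}}|)$. Both equal $1-g$ and every summand is $\le 0$, so every summand vanishes.
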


\begin{proof}
    Let $\Gamma$ be a plane tropical curve of genus $g$ whose natural quotient is $\Gamma'$.
    If $\Gamma$ has an edge of even weight fixed by $\bc$ then the sum of Welschinger signs of all conjugation invariant modification data for $(\Gamma, h, \bs{\abstroppoint}, \{\varphi_V\}_{V\in \Gamma^0})$ vanishes by Lemma \ref{lemma:even_in_real_modification_vanish}.
    Otherwise, every even edge of $\Gamma'$ has $2$ preimages in $\Gamma$, so we have
    \begin{align*}
        1-g = \chi(\Gamma) & = \chi(\Gamma'_{\text{re}}) + 2\chi(\Gamma'_{\text{im}}) - 2|\Gamma'_{\text{re}} \cap \Gamma'_{\text{im}}| = \\
        & = \chi(\Gamma'_{\text{re}}) + 2\sum_{K\in \pi_0(\Gamma'_{\text{im}})} (\chi(K) - |K\cap \Gamma'_{\text{re}}|).
    \end{align*}
    On the other hand, 
    \begin{align*}
        1 - g = \chi(\Gamma') & = \chi(\Gamma'_{\text{re}}) + \chi(\Gamma'_{\text{im}}) - |\Gamma'_{\text{re}} \cap \Gamma'_{\text{im}}| = \\
        & = \chi(\Gamma'_{\text{re}}) + \sum_{K\in \pi_0(\Gamma'_{\text{im}})} (\chi(K) - |K\cap \Gamma'_{\text{re}}|).
    \end{align*}
    So $\sum_{K\in \pi_0(\Gamma'_{\text{im}})} (\chi(K) - |K\cap \Gamma'_{\text{re}}|)=0$ and since $\chi(K)\le 1$ and $|K\cap \Gamma'_{\text{re}}|\ge 1$ for all connected components $K$ of $\Gamma'_{\text{im}}$, we have $\chi(K)=1$ and $|K\cap \Gamma'_{\text{re}}|=1$ for all connected components $K$ of $\Gamma'_{\text{im}}$.

    For each vertex $V\in \Gamma'^0_{\text{im}}\setminus\Gamma'^0_{\text{re}}$ we have $2$ choices for how to connect the edges of $\Gamma$ that are adjacent to the preimage of $V$ in $\Gamma$, so there are $2^{|\Gamma'^0_{\text{im}}|\setminus|\Gamma'^0_{\text{re}}|}$ curves $\Gamma$ whose natural quotient is $\Gamma'$.
    By Lemmas \ref{lemma:real_enhancements_count}, \ref{lemma:odd_in_real_modification_count}, and \ref{lemma:modification_conjugate_pair_edges}, the signed count of conjugation invariant modification data for $\Gamma$ is given by
    \begin{align*}
        \prod_{V\in \Gamma'^{0}_{\text{im}}} \mu(V)\prod_{E\in \Gamma'^{1}_{\text{im}}} \frac{1}{2}\prod_{\Gamma'^{0}_{\text{im}}\setminus\Gamma'^{0}_{\text{re}}} (-1)^{\mu(V)/4}\prod_{V\in \Gamma'^{0}_{\text{re}}} (-1)^{\Int(V)}.
    \end{align*}
    Multiplying this by $2^{|\Gamma'^0_{\text{im}}\setminus\Gamma'^0_{\text{re}}|}$ and remembering that every vertex in $\Gamma'^0_{\text{im}}\setminus\Gamma'^0_{\text{re}}$ has $2$ incoming even edges while every vertex in $\Gamma'^0_{\text{im}}\cap\Gamma'^0_{\text{re}}$ has $1$ incoming even edge, we get exactly \eqref{eq:real_conj_points_num}.
    
    Finally, following the proof of Lemma \ref{lemma:complex_patchworking} and of \cite[Theorem 2.15]{shustin2006patchworking} we see that the types of nodes appearing in the resulting curve $[\bn: (\widehat C,\bs{\absalgpoint})\to\Tor_\K(P)]$ are exactly the types of nodes appearing in the modification data for $\Gamma$ so the Welschinger sign of the resulting curve is given by the Welschinger sign of the modification data for $\Gamma$.    
\end{proof}

\section{Relative refined tropical invariants}\label{sec:refined}
\subsection{Definition and invariance}

\begin{definition}
    For $a\in \mbb{Z}$ we denote
    \[ [a]_y^{-} = \frac{y^{a/2}-y^{-a/2}}{y^{1/2}-y^{-1/2}} \]
    where $y$ is a formal variable \footnote{We use here the notation $y$ instead of $q$ to avoid confusion with the points $\bs{\embtroppoint}\subset \R^2$.}.
    For a trivalent tropical curve $\Gamma$ whose unfixed ends all have weight $1$, we define the relative refined weight of $\Gamma$ as
    \begin{equation*}
        \wt(\Gamma) = \prod_{V\in \Gamma^0}[\mu(V)]_y^{-}\prod_{E\in \Gamma^1_{\infty}}\frac{1}{[\wt(E)]_y^{-}}.
    \end{equation*}
\end{definition}

\begin{proposition}\label{prop:refined_weight_invariant}
    For a type of tropical point conditions $\langle u_i \rangle_{i=1}^n$ with $\sum_{i=1}^n \min\{ \Vert u_i \Vert,1\} = |\partial P| + g - 1$ the sum $\sum_{\Gamma\in\text{Ev}^{-1}(\bs{\embtroppoint})} \wt(\Gamma)$ does not depend on the choice of the points $\bs{\embtroppoint}\in \prod_{i=1}^n \nicefrac{\R^2}{u_i\R}$ as long as it is in general position.
\end{proposition}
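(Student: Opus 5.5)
The plan is the standard wall-crossing argument for tropical invariants. By Lemma \ref{lemma:Ev_generic}, the general-position configurations form the complement of a codimension-one polyhedral subset $W\subset\prod_{i=1}^n \nicefrac{\R^2}{u_i\R}$. On each connected component of the complement the sum $\sum_{\Gamma\in Ev^{-1}(\bs{\embtroppoint})}\wt(\Gamma)$ is locally constant. This is because $Ev$ restricted to each top-dimensional cell $\openmoduli{\alpha}$ of a regular trivalent type is an injective affine map (Lemma \ref{lemma:Ev_injective}), and the weight depends only on the combinatorial type.

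Because $W$ is codimension one, any two generic configurations can be joined by a generic path. Such a path crosses $W$ only at points lying in the image of a single codimension-one cell $\openmoduli{\beta}$, and crosses it transversally. So the statement reduces to a local check: for each enumeratively essential codimension-one type $\beta$, the sum of $\wt(\alpha)$ over regenerations $\alpha$ of $\beta$ whose images lie on one side of $Ev(\openmoduli{\beta})$ equals the corresponding sum on the other side. Lemma \ref{lemma:Ev_codim1} lists the three possible shapes of $\beta$, and I would treat them case by case.

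Case (1) is a $4$-valent unmarked vertex with all other vertices trivalent. Its three resolutions are the usual ones, and the identity needed is the Block--G\"ottsche/Itenberg--Mikhalkin local invariance of refined multiplicities $[\mu]_y^-$ at a $4$-valent vertex (\cite{BG,itenberg2013block}). The factors $\prod_{E\in\Gamma^1_\infty}[\wt(E)]_y^{-1}$ are unchanged by the resolution: by Proposition \ref{prop:complex_tangency_tropicalization} the unbounded ends and their weights are fixed, since unfixed ends have weight $1$ and fixed ends have the prescribed weight $|u_i|$. So these factors cancel, and the identity reduces to the known one.

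Case (2) is a marked point of type $u_i=0$ sitting at a vertex, i.e.\ the point slides through a trivalent vertex. This is the standard situation, and the three local types on the two sides match with identical vertex multiplicities. The one point to verify is that the fixed ends are also allowed here: a weight-$m$ fixed end whose marked point moves along its line changes no multiplicity. The new feature compared to \cite{CNRBI} is case (3), two $4$-valent vertices joined by a collinear cycle. I would first try to show, as in the Gathmann--Markwig and Block--G\"ottsche arguments, that such types either cannot be enumeratively essential under the point types in question or contribute equally on both sides because the collinear cycle deforms symmetrically. The cycle consists of two parallel edges whose weights $w_1,w_2$ sum to the weight of the merged edge. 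The regenerations open the cycle into a parallelogram on either side, and the multiplicities $[\mu]_y^-$ of the four resulting vertices pair up identically.

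I expect the main obstacle to be the interplay between fixed ends of high weight and the $4$-valent vertex case. When one of the four edges at the vertex is a fixed end with prescribed direction, some resolutions become inadmissible: the marked end must remain an end, and a resolution may produce a curve whose fixed end is not adjacent to its marked point. The balance identity then has to hold with only the admissible resolutions. I would handle this by a direct $\mathrm{SL}_2(\Z)$-normal-form computation of the quadrilateral dual to the vertex. One edge is the fixed end with weight $m$, and it is already "attached" to the point condition at infinity, so it behaves like an incoming edge with a fixed position. I would check that the resulting two-term or three-term identity $[a]_y^-[b]_y^- = [c]_y^-[d]_y^- \pm [e]_y^-[f]_y^-$ holds, using the fact that the $[\wt(E)]_y^{-1}$ factor for that end cancels from both sides.
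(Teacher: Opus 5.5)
Your proposal is correct and follows essentially the same route as the paper. Both use the standard wall-crossing reduction to equality of weights on the two sides of each enumeratively essential codimension-one type, and then treat the three cases of the codimension-one classification lemma. The paper simply defers all three cases to the proof of \cite[Theorem 1]{itenberg2013block}, the end factors being a common constant, so your planned hand checks are elaborations rather than new ingredients. Two small corrections to those checks: a fixed end is just an incoming edge with a prescribed line, so no resolution becomes inadmissible; and the collinear cycle opens into a quadrilateral with two parallel sides (not necessarily a parallelogram), with exactly one regeneration on each side and the same four vertex multiplicities on both sides.
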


\begin{proof}
    First note that, by Lemma \ref{lemma:Ev_generic}, the sum $\sum_{\Gamma\in\text{Ev}^{-1}(\bs{\embtroppoint})} \wt(\Gamma)$ is well-defined for all $\bs{\embtroppoint}\in \prod_{i=1}^n \nicefrac{\R^2}{u_i\R}$ in general position.
    Moreover, similarly to the proofs of \cite[Theorem 1]{itenberg2013block} and \cite[Theorem 5.6]{CNRBI}, it is enough to show that for an essentially enumerative type $\alpha$ of codimension $1$ we have $\sum_{\beta\in \mathcal{A}_{+}} \wt(\beta) = \sum_{\beta\in \mathcal{A}_{-}} \wt(\beta)$, where $\mathcal{A}_{+}$ and $\mathcal{A}_{-}$ are the sets of enumeratively essential regenerations $\beta$ of $\alpha$ with $\text{Ev}\left(\moduli{\beta} \right) \subseteq H_{+}$ and $\text{Ev}\left(\moduli{\beta} \right) \subseteq H_{-}$, respectively.

    By Lemma \ref{lemma:Ev_codim1} we need to consider three options for $\Gamma$, all of which were already considered in the proof of \cite[Theorem 1]{itenberg2013block}, thus the proposition follows.
\end{proof}

\begin{definition}\label{def:relative_refined_invariant}
    For a type of tropical point conditions $\langle u_i \rangle_{i=1}^n$ with $\sum_{i=1}^n \min\{ \Vert u_i \Vert,1\} = |\partial P| + g - 1$, define the relative refined invariant by
    \begin{equation*}
        RR_y(P, g, \langle u_i \rangle_{i=1}^n) = \sum_{\Gamma\in\text{Ev}^{-1}(\bs{\embtroppoint})} \wt(\Gamma)
    \end{equation*}
    where $\bs{\embtroppoint}\in \prod_{i=1}^n \nicefrac{\R^2}{u_i\R}$ is in general position.
\end{definition}

\begin{proposition}\label{prop:refined_invariant_complex}
    For $y\to 1$ the refined weight of $\Gamma$ tends to the number given in \eqref{eq:complex_tangency_num}.
\end{proposition}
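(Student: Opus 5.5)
The claim follows from the limit $[a]_y^{-}\to a$ as $y\to 1$, applied factor by factor to the finite product defining $\wt(\Gamma)$. First, for a fixed integer $a$, write $y=e^{s}$. Then
\[ [a]_y^{-}=\frac{e^{as/2}-e^{-as/2}}{e^{s/2}-e^{-s/2}}=\frac{\sinh(as/2)}{\sinh(s/2)}, \]
which tends to $a$ as $s\to 0$, by l'H\^opital or by expanding both sinh's to first order. For $a\ge 1$ one can argue even more directly: $[a]_y^{-}=y^{-(a-1)/2}(1+y+\dots+y^{a-1})$ is a Laurent polynomial in $y^{1/2}$, and evaluating it at $y=1$ gives $a$.

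Next, apply this to each factor. For every vertex $V\in\Gamma^0$, the number $\mu(V)$ is a positive integer, because $\Gamma$ is trivalent and, being in $\text{Ev}^{-1}(\bs{\embtroppoint})$ for general $\bs{\embtroppoint}$, has no contracted edges or flat vertices. Hence $[\mu(V)]_y^{-}\to\mu(V)$. For every end $E\in\Gamma^1_\infty$, the weight $\wt(E)\ge 1$ is a positive integer, so $[\wt(E)]_y^{-}\to\wt(E)\neq 0$. Consequently the reciprocal $1/[\wt(E)]_y^{-}$ tends to $\wt(E)^{-1}$.

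Since $\wt(\Gamma)$ is a finite product of these factors, the limit of the product is the product of the limits. This gives
\[ \lim_{y\to1}\wt(\Gamma)=\prod_{V\in\Gamma^0}\mu(V)\prod_{E\in\Gamma^1_\infty}\wt(E)^{-1}, \]
which is exactly \eqref{eq:complex_tangency_num}.

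The only point requiring care is the nonvanishing of the limits, that is, that no $\mu(V)$ or $\wt(E)$ is zero. This is secured by the general position assumption together with Proposition \ref{prop:complex_tangency_tropicalization}, which guarantees that the relevant curves are trivalent with nondegenerate vertices.
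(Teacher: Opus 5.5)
Your proof is correct and follows the paper's argument: apply $[a]_y^{-}\to a$ as $y\to 1$ factor by factor to the finite product defining $\wt(\Gamma)$. The paper also cites that all unfixed ends have weight $1$, which is just the standing hypothesis under which the refined weight is defined; and only the end weights in the denominators need to be nonzero, since $[0]_y^{-}=0$ at a vertex would match $\mu(V)=0$ on the other side.
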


\begin{proof}
    This is immediate from the fact that $ [a]_y^- \to a $ as $ y \to 1 $ and that all the unfixed ends of $\Gamma$ have weight $1$.
\end{proof}

\begin{proposition}\label{prop:refined_invariant_real}
    Let $\Gamma$ be a trivalent plane tropical curve of degree $\Delta$. Denote by $\Gamma_{\text{im}}$ the subgraph of $\Gamma$ containing all even edges, and by $\Gamma_{\text{re}}$ the subgraph of $\Gamma$ containing all odd edges.
    Assume that $\Gamma_{\text{re}}\ne \emptyset$.
    \begin{enumerate}
        \item The limit of the refined weight of $\Gamma$ at $y=-1$ is always defined and it does not vanish if, and only if, for all connected components $K$ of $\Gamma_{\text{im}}$ we have $\chi(K) =1$ and $|K\cap \Gamma_{\text{re}}| = 1$.
        \item If the limit does not vanish and all the ends of $\Gamma$ have weight at most $2$, then the limit is equal to \eqref{eq:real_conj_points_num}.
    \end{enumerate}
\end{proposition}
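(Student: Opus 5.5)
The plan is to compute the limit of each factor $[a]_y^-$ at $y=-1$ and then run the Euler-characteristic bookkeeping from the proof of Theorem \ref{thm:real_final_count}.

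\textbf{The basic limit.} Write $y=e^{i\theta}$ with $\theta\to\pi$. Then
\[ [a]_y^-=\frac{\sin(a\theta/2)}{\sin(\theta/2)}\to\sin(a\pi/2). \]
So for $a$ odd the limit is $(-1)^{(a-1)/2}$, and for $a$ even the limit is $0$.

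For even $a$ we need the order of vanishing. Put $\theta=\pi+\epsilon$. For even $a=2b$,
\[ \sin(a\theta/2)=\sin(b\pi+b\epsilon)=(-1)^b\sin(b\epsilon)\approx(-1)^{a/2}\frac{a}{2}\,\epsilon. \]
Hence $[a]_y^-$ vanishes to order exactly one, with leading coefficient $(-1)^{a/2}\frac{a}{2}\epsilon$.

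\textbf{Vertex and edge contributions.} At a trivalent vertex $V$, $\mu(V)$ is even if and only if at least two of the three adjacent edges are even. This follows from balancing: the edge vectors mod $2$ sum to zero, so the parities of the primitive-direction multiples force it.

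Let $k(V)$ be the number of even edges at $V$ (with $k(V)\in\{0,2,3\}$). Note that $\mu(V)$ is divisible by $2^{k(V)-1}$ when $k(V)\ge2$. More precisely, $\mu(V)/4$ is an integer when $k(V)=3$; this parity is what feeds the sign $(-1)^{\mu/4}$.

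The refined weight has $[\mu(V)]$ in the numerator and $[\wt(E)]$ only for unbounded $E$ in the denominator. So the order of vanishing at $y=-1$ is
\[ \#\{V:\mu(V)\ \text{even}\}-\#\{\text{even ends}\}. \]

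\textbf{Counting the order.} I will show this order equals
\[ \sum_K\bigl(|K\cap\Gamma_{\text{re}}|-\chi(K)\bigr)\ge0, \]
where $K$ runs over the components of $\Gamma_{\text{im}}$. This proves both that the limit is defined and the vanishing criterion of part (1).

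Treat even ends as edges to a univalent vertex at infinity, so that $\Gamma_{\text{im}}$ is the compactified subgraph. A vertex of $K$ has $\mu$ even exactly when it is a finite vertex of $K$. So the order is the number of finite vertices of $K$ minus the number of infinite vertices of $K$, summed over $K$. Since $\Gamma$ is trivalent, every vertex in $K\cap\Gamma_{\text{re}}$ has valency $2$ in $K$, and every vertex in $K\setminus\Gamma_{\text{re}}$ has valency $3$.

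Count edges of $K$ by summing valencies:
\[ 2e=3f_{\text{int}}+2f_{\text{bd}}+i, \]
where $f_{\text{int}}$ counts finite vertices of $K$ off $\Gamma_{\text{re}}$, $f_{\text{bd}}=|K\cap\Gamma_{\text{re}}|$, and $i$ counts infinite vertices. With $v=f_{\text{int}}+f_{\text{bd}}+i$, this gives
\[ \chi(K)=v-e=\frac{-f_{\text{int}}+i}{2}. \]

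It remains to compare $f_{\text{int}}+f_{\text{bd}}-i$ with $f_{\text{bd}}-\chi(K)$. I expect this bookkeeping, the relation between $\chi(K)$ and the vertex/end counts, to be the main obstacle. Possibly one must also use the global constraint coming from $\Gamma_{\text{re}}\neq\emptyset$ together with the identity
\[ \sum_K\bigl(\chi(K)-|K\cap\Gamma_{\text{re}}|\bigr)=\chi(\Gamma)-\chi(\Gamma_{\text{re}}), \]
as in the proof of Theorem \ref{thm:real_final_count}. The fallback is to argue locally that each component $K$ is a tree with a single attaching point. Then the orders cancel exactly: a tree with one root of valency $2$ has #finite vertices $=$ #leaves $-1$... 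I will adjust the count so that tree-with-one-root gives order $0$ and anything else gives positive order. Since all of this is nonnegative, $\Gamma_{\text{re}}\neq\emptyset$ rules out the degenerate all-even case.

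\textbf{Part (2).} When every $K$ is a tree attached at one point, the leading coefficients combine as follows.

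Each odd $\mu(V)$ contributes $(-1)^{(\mu(V)-1)/2}$. For a vertex in $\Gamma_{\text{re}}$ with all edges odd, I match this with $(-1)^{\Int(V)}$ via Pick's formula,
\[ \mu=2\Int+|\partial|-2=2\Int+1, \]
since all three sides are odd length, hence primitive-parity $1$ each. I must check that $|\partial|=3$ mod $4$ issues work out.

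Each even vertex contributes $(-1)^{\mu/2}\mu/2$. Each even end (weight exactly $2$ by hypothesis) contributes $(-1)\cdot1$ in the denominator. The powers of $\epsilon$ cancel by part (1).

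It then remains to match the signs $(-1)^{\mu/2}$ and the end signs against $(-1)^{\mu/4}$ on the vertices of $\Gamma_{\text{im}}\setminus\Gamma_{\text{re}}$ and $(-1)^{\Int}$ on the attaching vertices, again using Pick's formula on triangles with two even sides. This matching is a routine but error-prone parity check, which I would carry out case by case on $k(V)$.
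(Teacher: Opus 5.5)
There is a genuine gap, and it sits in the local parity lemma on which both parts of your argument rest. Your overall strategy is the paper's: expand each $[a]^-_y$ at $y=-1$, count orders of vanishing via the Euler characteristic of the components $K$ of $\Gamma_{\text{im}}$, then match leading coefficients using Pick's formula. But two steps fail as written.

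\textbf{Part (1): the parity lemma.} You claim that $\mu(V)$ is even iff at least two adjacent edges are even, so that $k(V)\in\{0,2,3\}$. The balancing argument you cite actually rules out $k(V)=2$: if two edge vectors are $\equiv 0 \pmod 2$, so is the third. The correct statement is $k(V)\in\{0,1,3\}$, with $\mu(V)$ even iff $k(V)\ge 1$. For $k(V)=0$ the three vectors are the three distinct nonzero classes in $(\Z/2)^2$, so $\mu$ is odd. On the other hand, the vertex with directions $(1,1),(1,-1),(-2,0)$ has $\mu=2$ and a single even edge.

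Consequently every vertex of $K\cap\Gamma_{\text{re}}$ is \emph{univalent} in $K$, not bivalent. Your count $2e=3f_{\text{int}}+2f_{\text{bd}}+i$ must be replaced by $2e=3f_{\text{int}}+f_{\text{bd}}+i$. This gives $2\chi(K)=f_{\text{bd}}+i-f_{\text{int}}$, so the order of vanishing contributed by $K$ is $f_{\text{int}}+f_{\text{bd}}-i=2\bigl(|K\cap\Gamma_{\text{re}}|-\chi(K)\bigr)$. Part (1) then follows at once from two facts: $\chi(K)\le 1$ because $K$ is connected, and $|K\cap\Gamma_{\text{re}}|\ge 1$ because $\Gamma$ is connected and $\Gamma_{\text{re}}\neq\emptyset$. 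This is exactly the paper's computation. It is why your comparison of $f_{\text{int}}+f_{\text{bd}}-i$ with $f_{\text{bd}}-\chi(K)$ would not close, and it replaces your unfinished rooted-tree ``fallback''.

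\textbf{Part (2): the sign matching.} Your Pick step $\mu=2\Int+|\partial|-2=2\Int+1$ is false. Here $|\partial|=w_1+w_2+w_3$ is odd but not $3$ in general. So at an all-odd vertex $(-1)^{(\mu-1)/2}=(-1)^{\Int(V)}\prod_j(-1)^{(w_j-1)/2}$, and a vertex-by-vertex match with $(-1)^{\Int(V)}$ fails. The attaching vertices have one even side, not two. There, with $w_0$ the even weight, $(-1)^{\mu/2}=(-1)^{\Int(V)}(-1)^{w_0/2}\prod_{w_j\ \text{odd}}(-1)^{(w_j-1)/2}$. At vertices with three even edges, $(-1)^{\mu/2}=(-1)^{\mu/4}\prod_j(-1)^{w_j/2}$.

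The missing idea is that these edge factors cancel globally, not locally:
\begin{itemize}
\item each factor occurs once at every finite endpoint of its edge, so it squares to $1$ on bounded edges;
\item on an end, it cancels against the leading sign of $[\wt(E)]^-_y$ in the denominator (for a weight-$2$ end, the sign of $[2]^-_y\approx-\epsilon$).
\end{itemize}
This is what the paper's redistribution of the factors $(-1)^{(\wt(E)-1)/2}$ and $(-1)^{\wt(E)/2-1}[2]^-_y$ along the regular orientation accomplishes. The hypothesis that ends have weight at most $2$ enters because an even end of weight $w$ would leave an extra factor $2/w$.

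With the corrected parity lemma and this global cancellation, your direct computation becomes the paper's proof.
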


\begin{proof}
    
    Note that by Pick's theorem, for a trivalent vertex of odd multiplicity all adjacent edges have odd weight, and for a trivalent vertex of even multiplicity there are either one or three adjacent edges of even weight (there cannot be exactly two adjacent edges of even weight by the balancing condition).

    In the neighborhood of $y=-1$ for $a$ odd we have $ [a]^{-}_y = (-1)^{\frac{a-1}{2}}+o(1)$ and for $a$ even we have $[a]^{-}_y = (-1)^{\frac{a}{2}-1}\frac{a}{2}(y^{1/2}+y^{-1/2})+o(y^{1/2}+y^{-1/2})$.
    Thus, the limit of the refined weight at $y\to -1$ is defined whenever the number of even vertices is at least the number of ends of even weight, and this limit vanishes if and only if the number of even vertices is strictly larger than the number of ends of even weight.
    For a connected component $K$ of $\Gamma_{\text{im}}$, denote by $K_\infty$ the set of its ends, by $K_0$ the set of its vertices, and by $K_1$ the set of its edges.
    Note that $K\cap \Gamma_{\text{re}}\ne \emptyset$ and this intersection contains only univalent vertices of $K$.
    Thus we get 
    \begin{equation*}
        1 \ge \chi(K) = |K_0| - |K_1|,
    \end{equation*}
    and
    \begin{align*}
        2|K_1| = \sum_{V\in K_0} \val_K(V) & = |K_\infty| + |K\cap \Gamma_{\text{re}}| + 3\cdot |K_0\setminus ((K\cap \Gamma_{\text{re}})\cup K_\infty)| = \\
        & = 3|K_0| - 2|K_\infty| - 2|K\cap \Gamma_{\text{re}}|,
    \end{align*}
    where $\val_K(V)$ denotes the valency of $V$ as a vertex of $K$.
    Thus,
    \begin{align*}
        |K_0\setminus K_\infty| - |K_\infty| & = |K_0| - 2|K_\infty| = 2|K_1|-2|K_0| + 2|K\cap \Gamma_{\text{re}}| = \\
        & = -2\chi(K) + 2|K\cap \Gamma_{\text{re}}| \ge -2+2=0,
    \end{align*}
    meaning that the limit of the refined weight at $y\to -1$ is always defined and does not vanish if and only if, for all connected components $K$ of $\Gamma_{\text{im}}$, we have $\chi(K)=1$ and $|K\cap \Gamma_{\text{re}}|=1$.

    Now consider a tropical curve $\Gamma$ satisfying the above conditions and consider the regular orientation of its edges.
    For every even edge divide the contribution of its target vertex to the refined weight by $(-1)^{\wt(E)/2-1}[2]^{-}_y$ while multiplying the contribution of its source vertex by the same factor.
    Similarly, for every odd edge divide the contribution of its source vertex to the refined weight by $(-1)^{\frac{\wt(E)-1}{2}}$ while multiplying the contribution of its target vertex by the same factor.
    Then one sees, by Pick's theorem, that the limit of the refined weight at $y\to -1$ is equal to
    \begin{align}\label{eq:limit_of_refined_at_y_minus_1}
        \prod_{V\in \Gamma_{\text{im}}^{0}}\frac{\mu(V)}{2}\cdot \prod_{V\in \Gamma_{\text{im}}^{0}\setminus \Gamma_{\text{re}}^{0}}(-1)^{\frac{\mu(V)-|\Delta(V)|}{2}}\cdot \prod_{V\in \Gamma_{\text{re}}^{0}}(-1)^{\frac{\mu(V)-\Delta(V)}{2}-1} & = \nonumber \\
        = \prod_{V\in \Gamma_{\text{im}}^{0}}\frac{\mu(V)}{2}\cdot \prod_{V\in \Gamma_{\text{im}}^{0}\setminus \Gamma_{\text{re}}^{0}}(-1)^{\frac{\mu(V)-|\Delta(V)|}{2}}\cdot \prod_{V\in \Gamma_{\text{re}}^{0}}(-1)^{\Int(V)} &
    \end{align}
    Here, $\Delta(V)$ denotes the integer points on the perimeter of the dual triangle to $V$.
    For a vertex $V\in \Gamma_{\text{im}}^{0}\setminus\Gamma_{\text{re}}^{0}$ the dual triangle is double of some triangle $\delta_0$ and we get
    \begin{equation*}
        (-1)^{\mu(V)/2+|\Delta(V)|/2} = (-1)^{2\cdot Area(\delta_0) + |\Delta(\delta_0)|} = (-1)^{Area(\delta_0)} = (-1)^{\mu(V)/4}
    \end{equation*} 
    by Pick's theorem. Thus, \eqref{eq:limit_of_refined_at_y_minus_1} and \eqref{eq:real_conj_points_num} coincide.
\end{proof}

\subsection{Computation}\label{sec:computation}
Computing the refined invariants defined in the current work is already an interesting and nontrivial problem even for $P=\conv\{(0,0), (d,0), (0,d)\}$ (i.e., for the projective plane).
The difficulty is that it appears complicated (and perhaps even impossible) to arrange the point conditions so that all enumerated tropical curves are floor-decomposed.

In the present work, we restrict the computation to the case where all boundary points lie on the same boundary component, i.e., all nonzero direction vectors are parallel to one another.

\begin{proposition}
	Let $P=\conv\{(0,0), (d,0), (0,d)\}$ for some $d\in \mbb{Z}_{\ge 0}$.
	For a sequence of numbers $\lambda_i\in \mbb{Z}_{\ge 0}$ such that $\sum_{i=1}^n \lambda_i = 3d+g-1$ set $u_i=\lambda_i \cdot (-1,0)$ for all $i$.
	Then the relative refined invariant $RR_y(P, g, \langle u_i \rangle_{i=1}^n)$ can be computed using floor diagrams as in \cite{FominMikhalkin2010}.
\end{proposition}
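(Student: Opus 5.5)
By Proposition \ref{prop:refined_weight_invariant}, $RR_y(P, g, \langle u_i \rangle_{i=1}^n)$ can be computed from any single configuration $\bs{\embtroppoint}$ in general position. The plan is to pick a configuration in \emph{Mikhalkin position}, stretched in the vertical direction, so that every tropical curve through it is floor decomposed. Then the sum of refined weights is rewritten as a sum over marked floor diagrams with refined multiplicities, in the spirit of \cite{FominMikhalkin2010} and of the refined floor diagrams of Block--G\"ottsche.

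\textbf{Choice of configuration.} All nonzero $u_i$ are parallel to $(-1,0)$. Hence the condition $\embtroppoint_i\in \nicefrac{\R^2}{u_i\R}$ for $\lambda_i>0$ just fixes the height $y=c_i$ of a left horizontal end of weight $\lambda_i$. For $\lambda_i=0$ the condition is an ordinary point $(a_i,c_i)$. I would choose all the heights $c_i$ pairwise distinct with gaps growing very fast, for example $c_{i+1}-c_i\gg$ everything before. For the interior points I would choose $|a_i|$ small compared with these gaps, ordered along a line of large slope, as in \cite{FominMikhalkin2010}.

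\textbf{Floor decomposition.} First I would show, by the standard stretching argument, that every curve in $\text{Ev}^{-1}(\bs{\embtroppoint})$ is floor decomposed:
\begin{itemize}
\item The curve splits into \emph{floors}, which are connected components after removing the vertical (elevator) edges. Each floor has exactly one edge of each non-horizontal end direction $(0,-1)$ and $(1,1)$ type, i.e. each floor is a degree-one piece in $x$.
\item Each point condition lies either on a floor or on an elevator.
\end{itemize}
The boundary conditions are left ends of weight $\lambda_i$ at prescribed heights, and these are horizontal. So after rotating by $90^\circ$ (or reinterpreting roles), the fixed ends are elevators going to $-\infty$ in the $x$-direction. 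The natural convention is to take floors vertical and elevators horizontal. Concretely, I apply the floor decomposition with respect to the $x$-projection, so that elevators are horizontal edges, including the fixed weight-$\lambda_i$ left ends.

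\textbf{Floor diagrams and refined weights.} The combinatorial data is then a floor diagram $\mathcal D$ together with a marking. Its ingredients are:
\begin{itemize}
\item $d$ floors.
\item Elevators with weights; the left infinite elevators have weights $\lambda_i$ when fixed, and weight $1$ when unfixed (there are $d-\sum$ of the latter).
\item Genus $g$.
\end{itemize}
The marking is a bijection from the points to floors and elevators, compatible with the order. The unfixed-ends counting mirrors \cite{FominMikhalkin2010}: fixed left ends behave like the "$\alpha$" elevators and free ones like "$\beta$" elevators of relative Gromov--Witten floor diagrams.

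For each marked diagram I would count the tropical curves realizing it and compute their refined weight. The vertices on an elevator of weight $w$ adjacent to floors contribute $[w]_y^-$ twice, once at each endpoint, since each trivalent vertex joining a weight-$w$ horizontal edge to a floor edge has multiplicity $w$. The other floor vertices have multiplicity $1$. The division by $[\lambda_i]_y^-$ for each fixed end cancels one of the two factors of that end. Hence the refined multiplicity of $\mathcal D$ is
\[\prod_{\text{bounded elevators } e}\bigl([w(e)]_y^-\bigr)^2\prod_{\text{fixed ends}}[\lambda_i]_y^-\big/[\lambda_i]_y^-,\]
with unit factors elsewhere. This is the refined Fomin--Mikhalkin relative multiplicity.

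\textbf{Main obstacle.} The hard step is proving uniqueness: each marked floor diagram must be realized by exactly one tropical curve through $\bs{\embtroppoint}$, with no extra curves whose fixed ends attach to non-floor structures. The issue is the fixed ends, whose only constraint is their height. I would handle this by induction on the heights as in \cite[Section 5]{FominMikhalkin2010}. Processing the points from top to bottom, each point either determines the position of a floor, since a floor is fixed by one point, or forces an elevator through it. A fixed end at height $c_i$ behaves as a point on an elevator with free $x$-coordinate. This freedom is absorbed because the end is unbounded to the left, so the counting remains bijective.
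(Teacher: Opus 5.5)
Your route is the paper's. Its proof identifies the curves with those of \cite[Section 3.2]{FominMikhalkin2010} for relative invariants whose unfixed tangencies all have order $1$, and records the refined multiplicity $\prod_e([w(e)]_y^-)^2$ over bounded elevators. Your derivation of that multiplicity is correct. Every cell of a floor-decomposed subdivision lies in a lattice strip of width $1$, so each trivalent vertex has exactly one adjacent elevator, of some weight $w$, and $\mu(V)=w$; each fixed end contributes $[\lambda_i]_y^-/[\lambda_i]_y^-=1$. The gap is in the floor-decomposition step, which fails for the configuration you actually write down.

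You make the heights $c_i$ widely separated and the $|a_i|$ small. That is vertical stretching. The stretching argument then produces floors of bounded slope that are dual to the horizontal strips $\{j\le y\le j+1\}$ of $P$, with \emph{vertical} elevators. The horizontal left ends are then pieces of floors and have weight $1$. This breaks your argument in two ways.
\begin{enumerate}
\item If some $\lambda_i\ge 2$, the fixed end is dual to a segment of length $\lambda_i$ on the side $\{x=0\}$ of $P$. The adjacent cell then has height at least $2$, so the curve is not floor decomposed in this sense at all.
\item Even with no fixed ends, vertical stretching yields Brugall\'e--Mikhalkin's vertically floor-decomposed curves, which are in general not floor decomposed with respect to horizontal edges. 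A vertical elevator of weight $2$, which already occurs for rational cubics, forces a cell of width $2$.
\end{enumerate}
So with your configuration the fixed ends are not elevators, and Fomin--Mikhalkin's relative diagrams (fixed ends as $\alpha$-elevators) do not describe the curves. Your ``top to bottom'' induction has the same mismatch.

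The fix is to stretch in the other direction. Choose the $x$-coordinates of the interior points with consecutive gaps much larger than $d$ times the spread of all prescribed heights, both of the interior points and of the fixed ends, and run the induction from left to right. Equivalently, apply $(x,y)\mapsto(y,x)$. This preserves $P$ and turns the fixed ends into downward ends with prescribed $x$-coordinates, which is exactly the Caporaso--Harris setting of \cite{FominMikhalkin2010} with the usual vertical stretching. Floors are then dual to the vertical strips of $P$. The interior points mark the $d$ floors, the $d-1+g$ bounded elevators and the unfixed left ends, each of weight $1$, while each fixed end is pinned by its own height condition rather than by a point. With this change the rest of your argument, including the multiplicity computation, goes through and agrees with the paper.
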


\begin{proof}
	The tropical curves considered here are the same as those considered in \cite[Section 3.2]{FominMikhalkin2010} for computing relative Gromov-Witten invariants with $\rho=\left\langle 1^{d-\sum\lambda_i}\right\rangle$, so it remains only to specify the refined multiplicity of floor diagrams.
	It is immediate that the refined multiplicity of a floor diagram is given by
	\begin{equation*}
		\prod_{e} \left([w(e)]_y^{-}\right)^2.
	\end{equation*}
\end{proof}

\section{Negative result}\label{sec:negative_result}
The signed count of real curves can be performed for real point conditions that are not essentially peripheral, but in positive genus this count is not invariant, even under very restrictive assumptions.

\begin{definition}
    For a conjugation invariant set of points $\bs{\embalgpoint}\subset (\K^\times)^2$ we define the reduced tropicalization of $\bs{\embalgpoint}$ as the multiset of points $\trop(\bs{\embalgpoint})\subset \R^2$.
    The multiplicity of a point $\embtroppoint \in \trop(\bs{\embalgpoint})$ is the number of points in $\bs{\embalgpoint}$ tropicalized to $\troppoint$.
\end{definition}

\begin{proposition}\label{prop:non_peripheral_real_conj_correspondence}
    Let $n,s\in \mbb{Z}_{\ge 0}$ be such that $n+s = |\partial P| + g - 1$.
    Let $\bs{\embalgpoint}\subset (\K^\times)^2$ be a conjugation invariant set of points of size $n+s$ containing $s$ conjugate pairs of points and additional $n-s$ points fixed by conjugation.
    Assume that $\bs{\embalgpoint}$ is in general position among the configurations of conjugation invariant point conditions with $s$ pairs of conjugate points and additional $n-s$ points fixed by conjugation, and assume that its reduced tropicalization is in general position, and let $\Gamma'$ be a plane tropical curve satisfying the reduced tropicalization of $\bs{\embalgpoint}$.
    Denote by $\Gamma'_{\text{re}}$ the subgraph of $\Gamma'$ containing all odd edges and their endpoints, and by $\Gamma'_{\text{im}}$ the subgraph of $\Gamma'$ containing all even edges and their endpoints.
    Similarly, denote by $\bs{\embtroppoint}_{\text{re}}$ and $\bs{\embtroppoint}_{\text{im}}$ the subsets of $\bs{\embtroppoint}$ consisting of the points with multiplicity $1$ and $2$ respectively.
    Then the signed count of real curves passing through $\bs{\embalgpoint}$ and tropicalizing to some $\Gamma$ whose natural quotient is $\Gamma'$ vanishes unless all of the following conditions are met:
    \begin{enumerate}
        \item $\gen(\Gamma') + \sum _{K\in \pi_0(\Gamma'_{\text{im}})} (|K \cap \Gamma'_{\text{re}}| - \chi(K)) = g$ where the sum is taken over all connected components $K$ of $\Gamma'_{\text{im}}$.
        \item All the marked points in $\bs{\embtroppoint}_{\text{re}}$ are contained in the interior of edges of $\Gamma'_{\text{re}}$.
        \item All the marked points in $\bs{\embtroppoint}_{\text{im}}$ are contained either on vertices of $\Gamma'_{\text{re}}$ disjoint from $\Gamma'_{\text{im}}$ or in the interior of edges of $\Gamma'_{\text{im}}$.
        \item All the ends of $\Gamma'$ have weight either $1$ or $2$.
        \item $\Gamma'$ is trivalent and regular. It has no vertices with $2$ incoming odd edges and one outgoing even edge (w.r.t. the regular orientation) and no vertices with exactly $2$ adjacent even edges.
    \end{enumerate}

    In this case, the signed count is equal to
    \begin{equation}\label{eq:non_peripheral_real_conj_points_num}
        \prod_{V\in \Gamma'^0_{\text{re}}\cap \bs{\troppoint}}\mu(V)\prod_{V\in \Gamma_{\text{re}}^{0}}(-1)^{\Int(V)}\prod_{V\in \Gamma_{\text{im}}^{0}}\frac{\mu(V)}{2}\prod_{V\in \Gamma_{\text{im}}^{0}\setminus\Gamma_{\text{re}}^{0}}(-1)^{\mu(V)/4}
    \end{equation}
\end{proposition}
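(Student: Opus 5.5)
We follow the scheme of Section \ref{sec:real_conjugate}, adapted to interior conjugate pairs. The first step is the analogue of Lemma \ref{lemma:real_tropicalization}. Let $C$ be a real genus $g$ curve through $\bs{\embalgpoint}$, $\Gamma$ its tropicalization and $\Gamma'=\Gamma/\bc$. The quotient $\Gamma'$ passes through the reduced tropicalization, and a marked point of multiplicity $2$ imposes two conditions, since it is the image of a pair of conjugate marked points on $\Gamma$. We then do the dimension count of Lemma \ref{lemma:dim_moduli_space} for $\Gamma'$, with the $n-s$ simple points and $s$ double points, each double point counting as a point condition in $\R^2$.

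The genus of $\Gamma'$ is not $g$ in general. Comparing Euler characteristics of $\Gamma$ and $\Gamma'$ exactly as in the proof of Theorem \ref{thm:real_final_count} gives
\[
g=\gen(\Gamma)=\gen(\Gamma')+\sum_K\bigl(|K\cap\Gamma'_{\text{re}}|-\chi(K)\bigr),
\]
when every even edge has two preimages. This is condition (1). An even edge fixed by $\bc$ is killed by Lemma \ref{lemma:even_in_real_modification_vanish}, so we may assume there are none.

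With this genus substituted, general position of the reduced tropicalization forces $\dim \openmoduli{[\Gamma']}$ to be maximal. Lemmas \ref{lemma:dim_moduli_space} and \ref{lemma:Ev_injective} then give trivalence, regularity, and the positions of the markings, up to one exception. A double point may lie on a vertex, because a pair of conjugate points on two conjugate branches can map to the same vertex without losing a dimension. Case analysis of where each marked point sits on $\Gamma$ then gives (2) and (3). Ends of weight greater than $2$ would give $|\Delta'|$ too small, which gives (4).

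Next, the signed count over each admissible $\Gamma'$ is assembled vertex by vertex as in Lemma \ref{lemma:real_enhancements_count}. Vertices of $\Gamma'^0_{\text{re}}$ that are unmarked, and the imaginary vertices, are treated exactly as before. They contribute $(-1)^{\Int(V)}$, or $\mu(V)/2$ together with the factor $(-1)^{\mu(V)/4}$ after we multiply by the $2$ gluing choices and divide by the edge weights. The new case is a real vertex $V$ carrying a double marked point. Its two conjugate preimage points on $\Gamma$ lie on conjugate edges meeting at $V$, and $V$ is $4$-valent in $\Gamma$ with two real ends and two conjugate ends. We count real rational curves in $|\mathcal L_{\delta_V}|$ through the two fixed conjugate points of $(\C^*)^2$ and matching the incoming real boundary data. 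A computation in the spirit of Lemma \ref{lemma:4_valent_real_enhancement}, using an explicit real parametrization, should give $\mu(V)$ solutions, all with the same elliptic-node parity $\Int(V)$. This produces the extra factor $\prod_{V\in\Gamma'^0_{\text{re}}\cap\bs{\troppoint}}\mu(V)$.

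The forbidden local configurations in (5) have to be shown to cancel. These are two incoming odd edges with an outgoing even edge, and exactly two even edges at a vertex. We will show this by pairing real limit curves of opposite Welschinger sign, analogously to the Chebyshev argument in Lemma \ref{lemma:even_in_real_modification_vanish}. In the first configuration the outgoing even edge must be fixed by $\bc$, so the cancellation comes directly from Lemma \ref{lemma:even_in_real_modification_vanish}. Modifications of odd real edges and of conjugate edge pairs are handled by Lemmas \ref{lemma:odd_in_real_modification_count} and \ref{lemma:modification_conjugate_pair_edges}. Patchworking and sign transfer then follow the last paragraph of the proof of Theorem \ref{thm:real_final_count}.

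The main obstacle is the new vertex case, namely a double marked point on a real vertex. The uniqueness of the node-parity sign is not obvious there, because the real parametrization has more freedom than in Lemma \ref{lemma:4_valent_real_enhancement}. We would first try the normal form $x=a\rho^{m_0}(\rho-\tau)^{k}(\rho-\bar\tau)^{k}$, with $y$ analogous, and solve the point conditions explicitly.
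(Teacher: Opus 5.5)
Your outline follows the paper: kill even edges fixed by $\bc$, compare Euler characteristics, run a dimension count, and assemble the vertex contributions as in Theorem \ref{thm:real_final_count}. There are, however, two genuine gaps.

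\textbf{The local model at a marked real vertex is wrong.} You single out this case as the main obstacle, but your picture contradicts condition (3) of the statement itself. If a double point sits on a vertex $V$ of $\Gamma'_{\text{re}}$, then by (3) $V$ is disjoint from $\Gamma'_{\text{im}}$, so all three edges at $V$ are odd. Hence the preimage of $V$ in $\Gamma$ is a trivalent real vertex with three real edges. Both conjugate points lie on the single real limit curve $C_V$; conjugate branches or conjugate edges at $V$ would put $V$ into $\Gamma'_{\text{im}}$. Moreover, $\Gamma'$ is regular and $V$ is marked, so removing $V$ leaves three branches, each with its own unfixed end. All three edges are therefore oriented away from $V$, and there is no incoming boundary data at all. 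The local problem is to count real rational peripherally unibranch curves in $|\mathcal L_{\delta_V}|$ through a generic conjugate pair of points of $(\C^*)^2$, with no boundary constraints. This is the count the paper takes from \cite[Lemma 2.5]{Shustin_welschinger_trop06}, and it gives the factor $\mu(V)$ in \eqref{eq:non_peripheral_real_conj_points_num}. Your planned parametrization with factors $(\rho-\tau)^k(\rho-\bar\tau)^k$ models conjugate boundary points that are not present, so it would compute a different quantity.

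\textbf{The derivation of (1)--(5) does not close as sketched.} You take $\gen(\Gamma)=g$ for granted, but a priori only $\gen(\Gamma)\le g$ holds, so (1) must come out as an equality case. More seriously, two things create a deficit in Lemma \ref{lemma:dim_moduli_space}: the smaller genus of $\Gamma'$, and the loss of one element of $\Delta'$ per even end. One must show this deficit is exactly paid for by the double points lying inside even edges, and ``case analysis'' does not do this. The paper uses three ingredients:
\begin{enumerate}
\item the algebraic facts $\bs{\embtroppoint}_{\text{re}}\cap\Gamma'_{\text{im}}=\emptyset$ and $\bs{\embtroppoint}_{\text{im}}\cap\Gamma'_{\text{re}}\subset\Gamma'^0$;
\item the bound $|\Delta'|\le|\partial P|-|\Gamma'_{\text{im}}\cap\Gamma'^0_\infty|$;
\item the key inequality \eqref{eq:non_peripheral_degree_sum}, which holds because, for each component $K$ of $\Gamma'_{\text{im}}$, general position forbids bounded components of $K\setminus\bs{\abstroppoint}$.
\end{enumerate}
Chaining these with $\gen(\Gamma)\le g$ gives $2n\le\dim\openmoduli{[\Gamma']}\le 2n$, and (1)--(5) are all read off from the equality case.

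In particular, (5) needs no sign-cancellation argument. Exactly two even edges at a trivalent vertex is impossible by balancing. A vertex with two incoming odd edges and an outgoing even edge breaks equality in \eqref{eq:non_peripheral_degree_sum}: the tree component of $K\setminus\bs{\abstroppoint}$ entered along that edge would need a second exit to reach its unfixed end. Without this argument, your claim that such an outgoing even edge must be fixed by $\bc$ is unsupported.
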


\begin{proof}
    By the argument at the end of the proof of Theorem \ref{thm:real_final_count}, the signed count in question is equal to the signed count of conjugation invariant modification data for $\Gamma$.
    By Lemma \ref{lemma:modification_conjugate_pair_edges} this signed count can differ from $0$ only if $\Gamma$ has no even edges fixed by $\bc$, so every even edge of $\Gamma'$ has $2$ preimages in $\Gamma$.
    Denote by $\Gamma_{\text{im}}$ the preimage of $\Gamma'_{\text{im}}$ in $\Gamma$ and by $\Gamma_{\text{re}}$ the preimage of $\Gamma'_{\text{re}}$ in $\Gamma$.
    Since $\gen(\Gamma) \le g$ we get, by a simple Euler characteristic argument, that 
    \begin{equation}\label{eq:non_peripheral_genus}
        \gen(\Gamma') + \sum _{K\in \pi_0(\Gamma'_{\text{im}})} (|K \cap \Gamma'_{\text{re}}| - \chi(K)) \le g
    \end{equation}
    where the sum is taken over all connected components $K$ of $\Gamma'_{\text{im}}$.

    Since vertices of $\Gamma_{\text{im}}$ correspond to non real components of an algebraic curve and edges of $\Gamma_{\text{im}}$ correspond to non real nodes, we get that $\bs{\embtroppoint}_{\text{re}}\cap \Gamma'_{\text{im}} = \emptyset$.
    Similarly, 
    \begin{equation}\label{eq:non_peripheral_im_re_intersection}
        \bs{\embtroppoint}_{\text{im}}\cap \Gamma'_{\text{re}} \subset \Gamma'^0.
    \end{equation}
    Denote by $\Delta'$ the degree of $\Gamma'$.
    Since every even end of $\Gamma'$ has weight at least $2$, we get that 
    \begin{equation}\label{eq:non_peripheral_degree}
        |\Delta'| \le |\partial P| - |\Gamma'_{\text{im}}\cap \Gamma'^0_{\infty}|.
    \end{equation}

    Let $K$ be a connected component of $\Gamma'_{\text{im}}$.
    Since $\bs{\embtroppoint}$ is in general position, we get that $K\setminus \bs{\abstroppoint}$ cannot have bounded components. Hence, for every connected component $L$ of $K\setminus \bs{\abstroppoint}$, we have $|L\cap \Gamma'_{\text{re}}| + |L\cap \Gamma'^0_{\infty}| \ge 1$.
    Summing over all connected components $L$ of $K\setminus \bs{\abstroppoint} $ we get that 
    \begin{equation}\label{eq:non_peripheral_degree_sum}
        |K \cap \Gamma'_{\text{re}}| + |K \cap \Gamma'^0_{\infty}| \ge \chi(K) + \sum_{\abstroppoint_i \in \bs{\abstroppoint}\cap K} (\val(\abstroppoint_i) - 1) \ge \chi(K) + |\bs{\abstroppoint}\cap K|.
    \end{equation}

    Finally, using the fact that $\bs{\embtroppoint}$ are in general position, we get that $\dim \openmoduli{[\Gamma']} \ge 2n $, so by Lemma \ref{lemma:dim_moduli_space} and all the above inequalities we get that (recall that $|\bs{\embtroppoint}_{\text{im}}| = s$, $|\bs{\embtroppoint}_{\text{re}}| = n-s$, and $n+s = |\partial P| + g - 1$)
    \begin{align*}
        2n \le & \dim \openmoduli{[\Gamma']} \le \\
        \le & |\Delta'| + \gen(\Gamma') - 1 + |\bs{\abstroppoint}\setminus \Gamma'^0| \le \\
        \overset{\text{\ref{eq:non_peripheral_degree}}}{\le} & |\partial P| - |\Gamma'_{\text{im}}\cap \Gamma'^0_{\infty}| + \gen(\Gamma') - 1 + |\bs{\abstroppoint}\setminus \Gamma'^0| \le \\
        \overset{\text{\ref{eq:non_peripheral_degree_sum}}}{\le} & |\partial P|  + \sum_{K\in \pi_0(\Gamma'_{\text{im}})} (|K \cap \Gamma'_{\text{re}}| - \chi(K)) - | \bs{\abstroppoint}\cap \Gamma'_{\text{im}}|  + \gen(\Gamma') - 1 + |\bs{\abstroppoint}\setminus \Gamma'^0| \le \\
        \overset{\text{\ref{eq:non_peripheral_genus}}}{\le} & |\partial P| + g - 1 - |\bs{\abstroppoint}\cap \Gamma'_{\text{im}}| + |\bs{\abstroppoint}\setminus \Gamma'^0| = \\
        = & n+s - |\bs{\abstroppoint}\cap \Gamma'_{\text{im}}| + |\bs{\abstroppoint}\setminus \Gamma'^0| = n+s - |\bs{\embtroppoint}_{\text{im}}\cap \Gamma'_{\text{im}}| + |\bs{\embtroppoint}_{\text{re}}\setminus \Gamma'^0| = \\
        = & n+s + |\bs{\embtroppoint}_{\text{im}}\setminus \Gamma'^0| - |\bs{\embtroppoint}_{\text{im}}\cap \Gamma'_{\text{im}}| + |\bs{\embtroppoint}_{\text{re}}\setminus \Gamma'^0| \le \\
        \overset{\text{\ref{eq:non_peripheral_im_re_intersection}}}{\le} & 2n
    \end{align*}
    which implies that all the above inequalities are equalities and thus all the vanishing conditions of the proposition.

    The proof of \eqref{eq:non_peripheral_real_conj_points_num} is identical to the proof of Theorem \ref{thm:real_final_count}, the only difference is the number of enhancements corresponding to a marked vertex of $\Gamma'_{\text{re}}$ which was computed to be $\mu(V)$ in \cite[Lemma 2.5]{Shustin_welschinger_trop06}.
\end{proof}

\begin{proposition}
    For genus $g=1$, degree $d=4$, and $s=1$ pair of conjugate points, given a conjugation invariant set of points $\bs{\embalgpoint}\subset (\K^\times)^2$ of size $3d+g-1=12$ containing $s=1$ pair of conjugate points and additional $10$ points fixed by conjugation such that the reduced tropicalization of $\bs{\embalgpoint}$ is in Mikhalkin position (see \cite[Section 4]{CNRBI}), the signed count of degree $4$ genus $1$ real curves in $\PP^2_\K$ passing through $\bs{\embalgpoint}$ is either $63$ or $69$ depending on which of the tropical points in the reduced tropicalization of $\bs{\embalgpoint}$ is the image of the conjugate pair of points.
\end{proposition}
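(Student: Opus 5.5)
The plan is to reduce the real count to a finite tropical enumeration via Proposition \ref{prop:non_peripheral_real_conj_correspondence}, and then carry out that enumeration explicitly for the Mikhalkin position.

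\textbf{Setup.} Fix the reduced tropicalization $\bs{\embtroppoint}$: eleven distinct points in Mikhalkin position, i.e. on a line of irrational slope with rapidly increasing gaps, as in \cite[Section 4]{CNRBI}. Exactly one of them, $\embtroppoint_j$, has multiplicity $2$ and is the image of the conjugate pair. The other ten are the images of the real points.

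\textbf{Step 1 (enumerate the admissible tropical curves).} In Mikhalkin position every plane tropical curve through $\bs{\embtroppoint}$ is floor decomposed. I would therefore enumerate the relevant quotient curves $\Gamma'$ by lattice paths, or equivalently marked floor diagrams of degree $4$.

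Conditions (1)--(5) of Proposition \ref{prop:non_peripheral_real_conj_correspondence} cut this list down sharply. Since $n+s=12$ with $s=1$ forces $n=11$, a dimension count shows there are two kinds of admissible curves.
\begin{itemize}
\item \emph{Genus-$1$ curves.} Here $\Gamma'$ has genus $1$, $\Gamma'_{\text{im}}=\emptyset$, and $\embtroppoint_j$ sits on a vertex of $\Gamma'_{\text{re}}$. Such a marked vertex contributes the factor $\mu(V)$.
\item \emph{Genus-$0$ curves.} Here $\Gamma'$ has genus $0$ and $\Gamma'_{\text{im}}$ is a nonempty even part with $\sum_K(|K\cap\Gamma'_{\text{re}}|-\chi(K))=1$. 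This means a single even component $K$ that is a tree meeting $\Gamma'_{\text{re}}$ in two points, or a tree meeting it in one point together with an even end. Either way $\embtroppoint_j$ lies on an even edge.
\end{itemize}
In Mikhalkin position these configurations are recognizable on lattice paths. A point of multiplicity $2$ is passed either through a corner of the path, giving a marked vertex, or through a step of even weight, giving an even edge. For each $j$ I would list the lattice paths through the $11$ marked points for which the step or vertex at position $j$ has this type.

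\textbf{Step 2 (weights and sum).} For each admissible $\Gamma'$, evaluate \eqref{eq:non_peripheral_real_conj_points_num}. Then sum over all admissible $\Gamma'$ for each choice of $j\in\{1,\dots,11\}$. The claim is that the totals take only the values $63$ and $69$, and that both values occur.

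Symmetry should reduce the work. Reversing the path order corresponds to a symmetry of the triangle, so positions $j$ and $12-j$ give equal counts. Further, the two contributions of Step 1 each depend only on local data near the position of $\embtroppoint_j$.

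\textbf{Main obstacle.} The hard step is the bookkeeping in Step 1: correctly enumerating the even-edge subgraphs $K$ allowed by condition (5) and by the vertex restriction in condition (3), and getting the signs $(-1)^{\Int(V)}$ and $(-1)^{\mu(V)/4}$ right.

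To control this, I would first run the analogous computation for $g=0$, where the answer must be independent of $j$ and equal the known Welschinger invariant. This checks both the sign conventions and the path enumeration. Once that is in place, the genus-$1$ computation is a finite check. I would organize it by the floor at which $\embtroppoint_j$ lands, and confirm that the two resulting totals are $63$ and $69$, which exhibits the non-invariance.
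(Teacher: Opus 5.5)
Your route is the paper's: reduce via Proposition~\ref{prop:non_peripheral_real_conj_correspondence} to a finite tropical count in Mikhalkin position, then run the lattice-path algorithm of \cite[Section 4]{Shustin_welschinger_trop06} and \cite[Section 5.3]{Schroeter_shustin_refined_elliptic08}. The paper does not carry this out by hand. It cites the implementation \cite{sinichkin_tropical_welschiner_counter_example_2026}, which returns $63$ when the double point is the sixth one and $69$ otherwise.

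Since the proof is nothing but this enumeration, the list of admissible quotient curves must be exactly right, and the one in your Step~1 is not. Condition~(1) is blind to ends: a component $K$ of $\Gamma'_{\text{im}}$ that is a tree attached to $\Gamma'_{\text{re}}$ at one point contributes $|K\cap\Gamma'_{\text{re}}|-\chi(K)=0$, however many even ends it has. So your second genus-$0$ alternative (a tree meeting $\Gamma'_{\text{re}}$ once, plus an even end) violates~(1). Such quotients lift to real curves of genus $0$; they are the weight-$2$-end configurations of the genus-$0$ Welschinger count and must not be counted. Conversely, that configuration is admissible when $\gen(\Gamma')=1$, and your genus-$1$ case omits it.

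Redo the analysis using what the proof of the proposition forces: the equalities $|K\cap\Gamma'_{\text{re}}|+|K\cap\Gamma'^0_{\infty}|=\chi(K)+|\bs{\abstroppoint}\cap K|$, at most one marked point on $\Gamma'_{\text{im}}$, and no vertex with exactly two even edges. Hence the leaves of $K$ are ends or points of $\Gamma'_{\text{re}}$, and all its other vertices are trivalent in $K$. This gives exactly three kinds of quotients.
(i) $\gen(\Gamma')=1$, $\Gamma'_{\text{im}}=\emptyset$, and $\embtroppoint_j$ lies at a vertex of $\Gamma'_{\text{re}}$.
(ii) $\gen(\Gamma')=1$, and $\Gamma'_{\text{im}}$ is a single end of weight $2$ containing $\embtroppoint_j$, attached to a vertex of $\Gamma'_{\text{re}}$. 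In the lattice path this is a boundary step of length $2$.
(iii) $\gen(\Gamma')=0$, and $\Gamma'_{\text{im}}$ is a single bounded even edge containing $\embtroppoint_j$ and joining two vertices of $\Gamma'_{\text{re}}$. In the lattice path this is an interior step of length $2$.

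Each of these has nonzero weight \eqref{eq:non_peripheral_real_conj_points_num}. An enumeration organized by your two buckets would therefore add spurious genus-$0$ terms and miss type (ii), and would not reproduce $63$ and $69$.

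Also drop the shortcut $j\leftrightarrow 12-j$. Reversing the order of the points does not give a configuration in Mikhalkin position, since the gaps would shrink instead of grow. Because the real count is not invariant, you cannot transfer counts between the two chambers. All eleven positions have to be computed, as the cited implementation does.
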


\begin{proof}
    Since the points are in Mikhalkin position, the required signed count can be computed using either the lattice-path algorithm or floor diagrams.
    Computation using the former method following \cite[Section 4]{Shustin_welschinger_trop06} and \cite[Section 5.3]{Schroeter_shustin_refined_elliptic08} was implemented by the second author in \cite{sinichkin_tropical_welschiner_counter_example_2026}, where it can be seen that if the image of the conjugate pair of points is the sixth point from the left, the signed count is $63$, and if it is any other point, the signed count is $69$.
\end{proof}

\bibliographystyle{amsplain-doi}
\bibliography{bib_collection}
\end{document}